\newtheorem{thm}{Theorem}[section]
\newtheorem{lem}[thm]{Lemma}
\newtheorem{cor}[thm]{Corollary}
\newtheorem{prop}[thm]{Proposition}
\theoremstyle{definition}
\theoremstyle{remark}
\numberwithin{equation}{section}
\definecolor{esperance}{rgb}{0.0,0.5,0.0}
\newcommand{\be}{\mathbf{e}}
\newcommand{\bbr}{\mathbf{r}}
\newcommand{\bu}{\mathbf{u}}
\newcommand{\bv}{\mathbf{v}}
\newcommand{\bw}{\mathbf{w}}
\DeclareMathOperator{\diam}{diam}
\DeclareMathOperator{\Span}{Span}
\DeclareMathOperator{\vol}{vol}
\DeclareMathOperator{\cov}{cov}
\newcommand\ov[1]{\overline{#1}}
\newcommand{\lam}{\lambda}
\newcommand{\Lam}{\Lambda}
\newcommand{\eps}{\epsilon}
\newcommand{\vphi}{\varphi}
\newcommand{\cE}{\mathcal{E}}
\newcommand{\cF}{\mathcal{F}}
\newcommand{\cK}{\mathcal{K}}
\newcommand{\cL}{\mathcal{L}}
\newcommand{\cQ}{\mathcal{Q}}
\newcommand{\cS}{\mathcal{S}}
\newcommand{\cT}{\mathcal{T}}
\newcommand{\cU}{\mathcal{U}}
\newcommand{\bR}{\mathbb{R}}
\newcommand{\bZ}{\mathbb{Z}}
\newcommand{\bQ}{\mathbb{Q}}
\newcommand{\bN}{\mathbb{N}}
\newcommand{\SL}{\operatorname{SL}}
\newcommand{\Sing}{\operatorname{Sing}}
\newcommand\wt[1]{\widetilde{#1}}
\newcommand\wh[1]{\widehat{#1}}
\newcommand\pa[1]{\left(#1\right)}
\newcommand\diag[1]{\operatorname{diag}\left(#1\right)}
\newcommand\mb[1]{\mathbf{#1}}
\newcommand\tb[1]{\textbf{#1}}
\newcommand\mat[1]{\pa{\begin{matrix}#1\end{matrix}}}
\newcommand{\pr}{\operatorname{\mb{pr}}}
\newcommand{\onto}{\xymatrix{\ar@{>>}[r]&}}
\newcommand{\eq}[1]
{
\begin{equation*}
{#1}
\end{equation*}
}
\newcommand{\eqlabel}[2]
{
\begin{equation}
{#2}\label{#1}
\end{equation}
}
\newcommand*{\rom}[1]{\expandafter\@slowromancap\romannumeral #1@}
\begin{document}

\title[Hausdorff dimension of weighted singular vectors]{On a lower bound of Hausdorff dimension of weighted singular vectors}

\date{}

\author{Taehyeong Kim}
\address{Taehyeong ~Kim. Einstein Institute of Mathematics, Hebrew University of Jerusalem, {\it taehyeong.kim@mail.huji.ac.il}}
%\address{Taehyeong ~Kim. Department of Mathematical Sciences, Seoul National University, {\it kimth@snu.ac.kr}}

\author{Jaemin Park}
\address{Jaemin ~Park. Department of Mathematical Sciences, Seoul National University, {\it woalsee@snu.ac.kr}}

% \thanks will become a 1st page footnote.
\thanks{}

%\author{}

\keywords{}

\def\thefootnote{}
\footnote{2020 {\it Mathematics
Subject Classification}: Primary 11J13 ; Secondary 11K55, 37A17.}   %%d 
\def\thefootnote{\arabic{footnote}}
\setcounter{footnote}{0}

\begin{abstract}
Let $w=(w_1,\dots,w_d)$ be a $d$-tuple of positive real numbers such that $\sum_{i}w_i =1$ and $w_1\geq \cdots \geq w_d$.
A $d$-dimensional vector $x=(x_1,\dots,x_d)\in\bR^d$ is said to be $w$-singular if for every $\eps>0$ there exists $T_0>1$ such that for all $T>T_0$
the system of inequalities
\eq{
\max_{1\leq i\leq d}|qx_i - p_i|^{\frac{1}{w_i}} < \frac{\eps}{T} \quad\text{and}\quad 0<q<T
}
has an integer solution $(\mb{p},q)=(p_1,\dots,p_d,q)\in \bZ^d \times \bZ$.
We prove that the Hausdorff dimension of the set of $w$-singular vectors in $\bR^d$ is bounded below by $d-\frac{1}{1+w_1}$. Our result partially extends the previous result of Liao et al. [\emph{Hausdorff dimension of weighted singular vectors in $\bR^2$}, J. Eur. Math. Soc. \textbf{22} (2020), 833-875].  
\end{abstract}

\maketitle

% Section : Introduction %%%%%%%%%%%%%%%%%%%%%%%%%%%%%%%%%%%%%%%%%%%%%%%%%%%%%%%%%%%%

\section{Introduction}
In 1926, Khintchine \cite{Khi26} introduced the notion of singularity in the sense of Diophantine approximation.
Recall that a vector $x=(x_1,\dots,x_d)\in\bR^d$ is said to be \textit{singular} if, for every $\eps>0$, there exists $T_0>1$ such that for all $T>T_0$
the system of inequalities 
\eqlabel{unweightedsing}{
\max_{1\leq i\leq d}|qx_i - p_i|^d < \frac{\eps}{T} \quad\text{and}\quad 0<q<T
}
admits an integer solution $(\mb{p},q)=(p_1,\dots,p_d,q)\in \bZ^d \times \bZ$.

The name \textit{singular} is derived from the fact that the set of singular vectors is a Lebesgue null set. 
On the other hand, the computation of the Hausdorff dimension of the set of singular vectors, or more generally singular matrices, has been a challenge 
until the breakthrough \cite{DFSU} using a variational principle in the parametric geometry of numbers. 
Historically, the first breakthrough was made in \cite{C11}, where it is proved that the Hausdorff dimension of the set of $2$-dimensional singular vectors is $4/3$.
This result was extended to the $d$-dimensional singular vectors in \cite{CC16}, where the authors proved that the set of $d$-dimensional singular vectors has Hausdorff dimension $d^2/(d+1)$.
For general $m\times n$ singular matrices, it was proved in \cite{KKLM} that the Hausdorff dimension of $m\times n$ singular matrices is at most 
$mn(1-\frac{1}{m+n})$ , and finally, it was shown in \cite{DFSU} that the upper bound is sharp.

In this article, we consider the weighted version of the singularity as follows:
Let $w=(w_1,\dots,w_d)\in \bR_{>0}^d$ be a $d$-tuple of positive real numbers such that $\sum_{i}w_i =1$ and $w_1\geq \cdots \geq w_d$.
We say that a vector $x=(x_1,\dots,x_d)\in\bR^d$ is \textit{$w$-singular} if for every $\eps>0$ there exists $T_0>1$ such that for all $T>T_0$
the system of inequalities
\eqlabel{weightedsing}{
\max_{1\leq i\leq d}|qx_i - p_i|^{\frac{1}{w_i}} < \frac{\eps}{T} \quad\text{and}\quad 0<q<T
}
admits an integer solution $(\mb{p},q)=(p_1,\dots,p_d,q)\in \bZ^d \times \bZ$. Denote by $\Sing(w)$ the set of $w$-singular vectors in $\bR^d$.
Here and hereafter, we always assume that the weight vector $w$ satisfies the above assumption.

Note that if all weights have the same value, that is, $w_i=1/d$ for all $i=1,\dots,d$, then the weighted system \eqref{weightedsing} is nothing but the unweighted system
\eqref{unweightedsing}. 

In the weighted setting, it was shown in \cite{LSST} that the set of $2$-dimensional $w$-singular vectors has Hausdorff dimension $2-\frac{1}{1+w_1}$.
The aim of the present article is to extend this $2$-dimensional result to higher dimensional cases regarding the lower bound of the Hausdorff dimension. Our main theorem is as follows.

\begin{thm}\label{mainthm}
For $d\geq 2$, 
\[
\dim \Sing(w) \geq d -\frac{1}{1+w_1},
\] where $\dim$ refers to the Hausdorff dimension.
%the Hausdorff dimension of $\Sing(w)$ is at least $d-\frac{1}{1+w_1}$.
\end{thm}

One of the main ingredients of the proof of Theorem \ref{mainthm} is Dani's correspondence, which means that $w$-singular vectors correspond 
to certain divergent trajectories in the space $\cL_{d+1}$ of unimodular lattices in $\bR^{d+1}$. More precisely, 
let $a_t:=\diag{e^{w_1 t},\dots, e^{w_d t},e^{-1}}\in \SL_{d+1}(\bR)$ and let
\[h(x):=\mat{I_d & x \\ 0 & 1}\in \SL_{d+1}(\bR) \quad \text{ for }  x\in\bR^d,\]
where $I_d$ is the $d\times d$ identity matrix.
Then $x$ is $w$-singular if and only if the diagonal orbit $\left(a_t h(x) \bZ^{d+1}\right)_{t\geq 0}$ is divergent.

Our method for the proof of Theorem \ref{mainthm} is basically an extension of the method in \cite{LSST}, hence we also have the following result
as in \cite[Theorem 1.5]{LSST}.
\begin{thm}\label{mainthm_2}
For any $\Lambda\in\cL_{d+1}$ and any nonempty open subset $U$ in $\bR^{d}$, the Hausdorff dimension of the set
\[
\{x\in U : \left(a_t h(x) \Lambda \right)_{t\geq 0} \text{ is divergent}\}
\] is at least $d-\frac{1}{1+w_1}$.
\end{thm}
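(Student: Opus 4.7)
The approach is to generalize the Cantor-set construction of \cite{LSST}, which handles $d=2$, to arbitrary $d\geq 2$. By Mahler's compactness criterion applied in the Dani-type setting described in the introduction, the trajectory $(a_t h(x)\Lambda)_{t\geq 0}$ is divergent in $\cL_{d+1}$ if and only if for every $\eps>0$ there is $t_\eps$ such that for every $t\geq t_\eps$ the lattice $a_t h(x)\Lambda$ contains a nonzero vector of norm less than $\eps$. Writing such a vector as $a_t h(x) v$ for $v=(\bp,q)\in \Lambda\setminus\{0\}$, this amounts to requiring $|p_i+qx_i|\leq \eps e^{-w_i t}$ for every $i$ and $|q|\leq \eps e^t$. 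It is therefore enough to construct inside $U$ a closed set $K$ of Hausdorff dimension at least $d-\frac{1}{1+w_1}$ on which such witnesses $v=v(t,x)$ exist at some sequence of times $t_n\uparrow\infty$.

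The set $K$ will be the limit of a tree of axis-parallel boxes. Choose sequences $t_n\uparrow\infty$ and $\eps_n\downarrow 0$ to be tuned at the end. A stage-$n$ box $B$ is labelled with a primitive vector $v_B=(\bp_B,q_B)\in\Lambda$ satisfying $q_B\asymp e^{t_n}$, and has sides of order $\eps_n e^{-(1+w_i)t_n}$ in the $i$th coordinate direction, so that the above defining inequalities hold uniformly on $B$. Inside each $B$ one places many pairwise disjoint sub-boxes $B'$ at stage $n+1$, each labelled by a distinct primitive $v_{B'}=v_B+u$ with $u\in\Lambda$ drawn from a prescribed anisotropic window; the number of admissible $u$ is controlled by Minkowski-type lattice-point counts that depend only on the covolume of $\Lambda$, and this uniformity is precisely what allows the argument to work for every $\Lambda\in\cL_{d+1}$. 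Because $w_1$ is the largest weight, $B$ is narrowest in the $x_1$-direction and widest in the $x_d$-direction, so the refinement is tightly constrained only in $x_1$; organizing the children so that $x_2,\ldots,x_d$ enjoy almost full freedom while the $x_1$-direction carries a Cantor factor of dimension $\frac{w_1}{1+w_1}$, an application of the mass distribution principle to the natural Bernoulli-type measure on the tree yields $\dim_H K\geq (d-1)+\frac{w_1}{1+w_1}=d-\frac{1}{1+w_1}$.

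The principal obstacle is the refinement step. One must simultaneously secure enough primitive vectors $v_{B'}$ of the required shape in $\Lambda$, pairwise disjointness of the associated sub-boxes, and a balance between the child count and the anisotropic contraction ratios that delivers precisely the target dimension. For $d=2$ this is essentially a one-parameter balancing, carried out explicitly in \cite{LSST} by tuning $t_{n+1}-t_n$ and $\eps_n$; for general $d$ the counting has to be arranged uniformly across $d-1$ free directions with distinct weights $w_2,\ldots,w_d$, and the lattice-point estimates must be sharp in anisotropic boxes of aspect ratio $e^{(w_1-w_d)t_n}$. This is the one place where the higher-dimensional case requires genuine work beyond the $d=2$ argument. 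Once it is established, the remaining ingredients---Mahler's criterion, the verification that every $x\in K$ has a divergent trajectory, and the mass distribution lower bound---follow the template of \cite{LSST} with only notational changes.
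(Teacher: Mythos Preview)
Your outline is essentially the paper's approach, and indeed the paper's own proof of this theorem is a one-line deferral to the \cite{LSST} argument once the machinery of Sections~\ref{sec2}--\ref{sec4} is in place. Two points in your sketch need sharpening: (i) when $w_1=\cdots=w_\ell$ with $\ell>1$, the first $\ell$ coordinate directions are all equally narrow and must be handled jointly via the self-affine dimension theorem (Theorem~\ref{FHThm1} and Corollary~\ref{FHCor24}), not as one Cantor direction plus $d-1$ ``free'' ones; (ii) the lattice-point counts are \emph{not} uniform in the covolume alone---the construction must inductively force the intermediate lattices $a_{t_n}h(\tau)\Lambda$ into compact sets of the form $\cK^*_{\eps_n^2}$ (together with a second condition involving an auxiliary diagonal element $b_n$), discarding the children where this fails, and the counting lemmas of Section~\ref{sec3} depend essentially on those hypotheses rather than on unimodularity.
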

Theorem \ref{mainthm_2} implies the following corollary as in \cite[Corollary 1.6]{LSST} (see also \cite{GS,AGMS}).
\begin{cor}\label{cor_1}
The Hausdorff dimension of the set
\[
\{\Lambda\in\cL_{d+1}: \left(a_t \Lambda \right)_{t\geq 0} \text{ is divergent}\}
\] is at least $\dim\SL_{d+1}(\bR)-\frac{1}{1+w_1}=(d+1)^2 -1 -\frac{1}{1+w_1}$.
\end{cor}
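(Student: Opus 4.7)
The plan is to deduce Corollary \ref{cor_1} from Theorem \ref{mainthm_2} by combining a local product decomposition of $\SL_{d+1}(\bR)$ with a Marstrand-type slicing inequality for Hausdorff dimension.

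Write $G := \SL_{d+1}(\bR)$ and $H := \{h(x) : x \in \bR^d\} \subset G$. Since $H$ is a $d$-dimensional closed subgroup, one can pick a linear complement $\mathfrak{p}$ to $\mathrm{Lie}(H)$ inside $\mathrm{Lie}(G)$, of dimension $(d+1)^2 - 1 - d = d^2 + d$, and set $P := \exp(\mathfrak{p})$. Then the multiplication $P \times H \to G$, $(p, h) \mapsto hp$, is a diffeomorphism from a neighborhood of $(e, e)$ onto a neighborhood of $e \in G$. Choose bounded open neighborhoods $V \subset P$ of $e$ and $U \subset \bR^d$ of $0$ small enough that the map
\[
\Phi : V \times U \longrightarrow \cL_{d+1}, \qquad \Phi(p, x) := h(x) \, p \cdot \bZ^{d+1},
\]
is a bi-Lipschitz embedding (this is possible since the covering $G \to \cL_{d+1}$ is a local diffeomorphism near $\bZ^{d+1}$).

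Next, for each fixed $p \in V$, put $\Lambda_p := p\, \bZ^{d+1} \in \cL_{d+1}$ and apply Theorem \ref{mainthm_2} to the pair $(\Lambda_p, U)$. This yields
\[
E_p := \{x \in U : (a_t h(x) \Lambda_p)_{t \geq 0} \text{ is divergent}\}, \qquad \dim_H E_p \;\geq\; d - \tfrac{1}{1+w_1}.
\]
Because $a_t \Phi(p, x) = a_t h(x) \Lambda_p$, the image $\Phi(E)$ of the union $E := \bigcup_{p \in V} \{p\} \times E_p \subset V \times U$ is contained in
\[
D := \{\Lambda \in \cL_{d+1} : (a_t \Lambda)_{t \geq 0} \text{ is divergent}\}.
\]

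Finally, the classical Marstrand-type slicing inequality asserts that if $E \subset \bR^m \times \bR^n$ is Borel and $\dim_H E_p \geq s$ for every $p$ in a set of positive $m$-dimensional Lebesgue measure, then $\dim_H E \geq m + s$. Taking $m = \dim P = d^2 + d$ and $s = d - \tfrac{1}{1+w_1}$, and using the bi-Lipschitz invariance of Hausdorff dimension under $\Phi$, one obtains
\[
\dim_H D \;\geq\; \dim_H \Phi(E) \;=\; \dim_H E \;\geq\; (d^2 + d) + \pa{d - \tfrac{1}{1+w_1}} \;=\; (d+1)^2 - 1 - \tfrac{1}{1+w_1}.
\]

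The only routine technicality is the Borel measurability of $E$, which follows because the divergence condition can be written as a countable intersection of open conditions on $(p, x)$. The genuine content comes from Theorem \ref{mainthm_2}, which supplies the fiberwise lower bound uniformly in $p$; no additional dynamical input is required, and no step is expected to present a serious obstacle beyond the careful application of the slicing inequality.
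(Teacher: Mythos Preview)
Your argument is correct and is precisely the standard deduction the paper has in mind: the paper does not spell out a proof of Corollary~\ref{cor_1} but simply records that it follows from Theorem~\ref{mainthm_2} ``as in \cite[Corollary~1.6]{LSST}'', and the LSST argument is exactly the local product decomposition $G \simeq H \cdot P$ together with the Marstrand--Mattila slicing inequality that you wrote out. Your dimension count $(d^2+d) + \big(d - \tfrac{1}{1+w_1}\big) = (d+1)^2 - 1 - \tfrac{1}{1+w_1}$ and the bi-Lipschitz transfer via $\Phi$ are correct; the only cosmetic point is that the divergence set is more naturally seen to be $G_{\delta}$ (hence Borel) rather than literally a countable intersection of open sets, but this does not affect the argument.
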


Recently, Solan \cite{Sol} established a variational principle in the parametric geometry of numbers for general flows. Following his notations,
we consider the following two subgroups:
\[
\begin{split}
H&=\{g\in\SL_{d+1}(\bR): a_{-t}ga_{t}\to I_{d+1}\text{ as } t\to \infty\},\\
H'&=\{h(x)\in\SL_{d+1}(\bR) : x\in \bR^d \}.
\end{split}
\] 
Note that $H$ is the unstable horospherical subgroup of $a_1$. In the unweighted setting ($w_1=\dots=w_d$), the two subgroups $H$ and $H'$ are the same, but in general, $H$ is bigger than $H'$.  
One of the applications of the variational principle for general flows in \cite{Sol} is to give an upper bound of the Hausdorff dimension of the set
\[
\Sing(H,\Lambda ; a_t)=\{h\in H : \left(a_t h\Lambda \right)_{t\geq 0}\text{ is divergent}\}.
\]
More precisely, \cite[Corollary 2.34]{Sol} implies that the Hausdorff dimension of $\Sing(H,\Lambda ; a_t)$ 
is at most $\mathrm{dim}~H - \frac{1}{1+w_1}$.
On the other hand, Theorem \ref{mainthm_2} implies that  the Hausdorff dimension of $\Sing(H,\Lambda ; a_t)$ is at least 
$\dim H - \frac{1}{1+w_1}$, hence we have the following corollary.
\begin{cor}
The Hausdorff dimension of $\Sing(H,\Lambda ; a_t)$
is $\dim H - \frac{1}{1+w_1}$.
\end{cor}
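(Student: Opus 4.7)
The upper bound $\dim_H \Sing(H,\Lambda;a_t) \leq \dim H - \frac{1}{1+w_1}$ is quoted directly from \cite[Corollary 2.34]{Sol}. The plan is to obtain the matching lower bound by reducing it to Theorem~\ref{mainthm_2} through a fibered product argument that exploits the structure of $H$.

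Write $\lambda_i = w_i$ for $i \leq d$ and $\lambda_{d+1} = -1$. The defining condition $a_{-t} g a_t \to I_{d+1}$ forces $g_{ii}=1$ and permits $g_{ij}\ne 0$ (for $i\ne j$) only when $\lambda_j < \lambda_i$. Consequently $H$ contains the subgroup $H_0 := \{g \in H : g e_{d+1} = e_{d+1}\}$, which consists of block matrices $\smallmat{A & 0 \\ 0 & 1}$, and every $h \in H$ factors uniquely as $h = h(x)\, h_0$ with $x \in \bR^d$ and $h_0 \in H_0$ (read off the last column). Hence the product map $\Phi : H_0 \times \bR^d \to H$, $(h_0,x) \mapsto h(x) h_0$, is a smooth diffeomorphism, $\dim H = \dim H_0 + d$, and $\Phi$ preserves Hausdorff dimension on Borel sets.

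For each $h_0 \in H_0$ set $\Lambda_0 := h_0 \Lambda$ and apply Theorem~\ref{mainthm_2} to $\Lambda_0$ with $U = \bR^d$: the slice
\[
S_{h_0} := \{x \in \bR^d : (a_t h(x) h_0 \Lambda)_{t \geq 0} \text{ is divergent}\}
\]
satisfies $\dim_H S_{h_0} \geq d - \frac{1}{1+w_1}$. Let $E := \{(h_0,x) \in H_0 \times \bR^d : x \in S_{h_0}\}$, which is Borel, and note $\Phi(E) \subset \Sing(H, \Lambda; a_t)$. The standard Marstrand--Mattila Fubini-type inequality for Hausdorff measure, $\int^{*} \mathcal{H}^{s}(E_{h_0})\, d\mathcal{H}^{\dim H_0}(h_0) \leq C\, \mathcal{H}^{s+\dim H_0}(E)$, applied for every $s < d - \frac{1}{1+w_1}$, forces $\mathcal{H}^{s+\dim H_0}(E) = +\infty$ (the integrand is identically $+\infty$), and hence $\dim_H E \geq \dim H_0 + d - \frac{1}{1+w_1}$. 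Because $\Phi$ preserves Hausdorff dimension, $\dim_H \Sing(H,\Lambda;a_t) \geq \dim H - \frac{1}{1+w_1}$, matching the upper bound.

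All the nontrivial Diophantine and dynamical content is packaged into Theorem~\ref{mainthm_2}; the only genuinely new step here is the explicit description of $H$ and the product decomposition $H = H' \cdot H_0$, after which the slicing inequality from geometric measure theory yields the corollary. No significant obstacle arises in this reduction.
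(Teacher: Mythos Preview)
Your proof is correct and follows the same outline as the paper: the upper bound is quoted from \cite[Corollary~2.34]{Sol} and the lower bound is deduced from Theorem~\ref{mainthm_2}. The paper itself does not supply a proof of the corollary beyond asserting that Theorem~\ref{mainthm_2} implies the lower bound; your product decomposition $H=H'\cdot H_0$ together with the Marstrand--Mattila slicing inequality is precisely the standard way to make this implication rigorous, so you have simply filled in the details the paper left implicit.
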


\textit{Structure of the paper.} In Section \ref{sec2}, we first recall fractal structures and self-affine structures. Then we estimate the lower bound of the Hausdorff dimension of the associated fractal set. In Section \ref{sec3}, we generalize the lattice point counting results in \cite[Section 3]{LSST} to higher dimensional cases. In Section \ref{sec4}, we construct a fractal set contained in the set of weighted singular vectors and prove Theorem \ref{mainthm} by estimating the Hausdorff dimension of the fractal set. \\

\textit{Convention.} In what follows, the notation $A \ll B$ means that there exists a constant $C$ (the \textit{implied constant}) such that $A \leq CB$. The notation $A \asymp B$ means $A \ll B\ll A$.
When we write $A \ll_{D} B$ or $A \asymp_D B$, we mean that the implied constant depends on $D$. 

\vspace{5mm}
\tb{Acknowledgments}. 
We express our gratitude to Seonhee Lim for her constant help and encouragement.
We also thank Fr\'{e}d\'{e}ric Paulin for providing helpful comments, the anonymous referee for carefully reading this article and the suggestions that improved the exposition.

TK and JP were supported by the National Research Foundation of Korea under Project Number NRF-2020R1A2C1A01011543. TK was supported by the National Research Foundation of Korea under Project Number NRF-2021R1A6A3A13039948.
JP was supported by Industrial and Mathematical Data Analytics Research Center under Project Number 2022R1A5A6000840.

% Section : Fractal structure and Hausdorff dimension %%%%%%%%%%%%%%%%%%%%%%%%%%%%%%%%%%%%%%%%%%%%%%%%%%%%%%%%%%%%

\section{Fractal sutructure and Hausdorff dimension}\label{sec2}

% subsection : Fractal structure %%%%%%%%%%

\subsection{Fractal structure}

A \textit{tree} $\cT$ is a connected graph without cycles.
If we take a vertex $\tau_0$ and fix it (we call it a \textit{root}), then $\cT$ is a \textit{rooted tree}.
In this paper, we identify $\cT$ with the set of vertices of $\cT$.
It can be checked directly from the definition of $\cT$ that any $\tau \in \cT$ can be joined to $\tau_0$ by a unique geodesic edge path.
We define the \textit{height} of $\tau$ as the length of the geodesic edge path joining $\tau$ and $\tau_0$. Denote the set of vertices of height $n$ by $\cT_n$.
For any $\tau \in \cT_n$, there exists a unique $\tau_{n-1} \in \cT_{n-1}$ such that $\tau$ and $\tau_{n-1}$ are adjacent.
In this case, we say that $\tau$ is a \textit{son} of $\tau_{n-1}$. Denote the set of all sons of $\tau_{n-1}$ by $\cT(\tau_{n-1})$.
The \textit{boundary} of $\cT$, denoted by $\partial \cT$, is the set of all sequences $\{\tau_n\} = \{\tau_n\}_{n\in\bN\cup\{0\}}$ where $\tau_n$ is a son of $\tau_{n-1}$ for all $n\in\bN$.

A \textit{fractal structure} on $\bR^d$ is a pair $(\cT,\beta)$ where $\cT$ is a rooted tree and $\beta$ is a map from $\cT$ to the set of nonempty compact subsets of $\bR^d$.
The \textit{fractal} associated to $(\cT,\beta)$ is the set 
\eq{ \cF(\cT,\beta) = \bigcup_{ \{\tau_n\} \in \partial \cT } \bigcap_{n=0}^\infty \beta(\tau_n). }

A fractal structure $(\cT,\beta)$ is said to be \textit{regular} if it satisfies the followings:
\begin{itemize}
\item each vertex of $\cT$ has at least one son;
\item if $\tau$ is a son of $\tau'$,  then $\beta(\tau) \subset \beta(\tau')$;
\item for any $\{\tau_n\} \in \partial \cT$, $\diam{\beta(\tau_n)} \to 0$ as $n \to \infty$.
\end{itemize}

% subsection : Self-affine structure and lower bound %%%%%%%%%%

\subsection{Self-affine structure and lower bound}

A \textit{self-affine structure} on $\bR^d$ is a fractal structure $(\cT,\beta)$ on $\bR^d$ such that for $\tau \in \cT$ the compact subset $\beta(\tau)$ of $\bR^d$ is given by a $d$-dimensional rectangle with size $L^{(1)}(\tau) \times \cdots \times L^{(d)}(\tau)$.
A self-affine structure is \textit{regular} if it is a regular fractal structure.

%We deal with a regular self-affine structure on $\bR^d$ that associates to sequences $\{\rho_n\}, \{C_n\}, \{L_n^{(j)}\}$ for $j=1,\dots,d$ of positive real numbers indexed by $\bN \cup \{0\}$ following \cite[Theorem 2.1]{LSST}.
The following theorem is a generalization of \cite[Theorem 2.1]{LSST} for $d$-dimensional self-affine structures.
\begin{thm}\label{FHThm1}
Let $(\cT,\beta)$ be a regular self-affine structure on $\bR^d$ associated to the sequences $\{\rho_n\}, \{C_n\}, \{L_n^{(j)}\}$ $(j=1,\dots,d)$ of positive real numbers indexed by $\bN \cup \{0\}$ with the following properties.
\begin{enumerate}
\item\label{thm2pro1} The sequence $\{L_n^{(j)}\}$ is decreasing in $n\in \bN \cup \{0\}$ for each $j=1,\dots,d$. 
\item\label{thm2pro2} There exists $1\leq \ell < d$ such that
  	\eq{L_n^{(1)}=\cdots=L_n^{(\ell)} < L_n^{(\ell+1)}\leq \cdots \leq L_{n}^{(d)} \quad\text{and}\quad L^{(j)}(\tau) = L_n^{(j)}}
for all $n\in\bN \cup \{0\}$, $j=1,\dots,d$, and $\tau\in\cT_n$;
\item\label{thm2pro3} $C_0=1$ and $\#\cT(\tau) \geq C_n$ for all $n\in\bN$ and $\tau\in\cT_{n-1}$;
\item\label{thm2pro4} $\rho_n \leq 1$ for all $n\in\bN$ and 
\eq{ \textrm{dist}(\beta(\tau),\beta(\kappa)) \geq \rho_{n+1}L_n^{(1)} }
for all $\tau_n \in \cT_n$ and distinct $\tau,\kappa \in \cT(\tau_n)$.
\end{enumerate}
We denote
\[
\begin{split}
P_n &= \prod_{i=0}^n C_i, \\
D_n &= \max\{i \geq n : L_i^{(d)} \geq L_n^{(1)} \},\\
s &= \sup \left\{ t>0 : \lim_{n \to \infty} \frac{\log (P_n (L_n^{(1)})^t \rho_{n+1}^{t} \cdot \prod_{i=n+1}^{D_n} \rho_i^\ell C_i) }{ \max \{D_n-n,1 \} } =\infty \right\}.
\end{split}
\]
%Let
%\[
%
%s = \sup \left\{ t>0 : \lim_{n \to \infty} \frac{\log (P_n (L_n^{(1)})^t \rho_{n+1}^{t} \cdot \prod_{i=n+1}^{D_n} \rho_i^l C_i) }{ \max \{D_n-n,1 \} } =\infty \right\}.
%\]
If $s>d-\ell$, then $\dim \cF(\cT,\beta) \geq s$.
\end{thm}

Using Theorem \ref{FHThm1}, we obtain the following corollary which is a generalization of \cite[Corollaries 2.3 and 2.4]{LSST} for $d$-dimensional self-affine structures.
\begin{cor}\label{FHCor24}
With the notations in Theorem \ref{FHThm1},
suppose that there exists $k,n_0 \in \bN$ such that for all $n \geq n_0$ the followings hold:
\begin{enumerate}[label=(\roman*)]
\item\label{FHCorA1} $\frac{ L_{kn}^{(d)} }{ L_{kn-1}^{(d)} } \leq \frac{ L_n^{(1)} }{L_{n-1}^{(1)}} \text{ and } L_{kn_0-1}^{(d)} < L_{n_0-1}^{(1)},$
\item\label{FHCorA2} $e^{n/k} \leq C_n \leq e^{kn},$
\item\label{FHCorA3} $e^{-kn} \leq \rho_n \leq e^{-n/k},$
\item\label{FHCorA4} $\rho_n^\ell C_n \prod_{j=\ell+1}^d L_n^{(j)}/L_{n-1}^{(j)} \geq n^{-k}.$
\end{enumerate}
If the limit
\eqlabel{Eq_limit}{ 
\lim_{n \to \infty} \frac{ \log \left(  C_n\prod_{j=\ell+1}^d L_n^{(j)}/L_{n-1}^{(j)} \right)  }{ -\log \left( L_n^{(1)} / L_{n-1}^{(1)} \right)} 
}
exists and is equal to $r>0$, then $\dim \cF(\cT,\beta) \geq d-\ell+r$.
\end{cor}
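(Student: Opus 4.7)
The strategy is to invoke Theorem \ref{FHThm1} with exponent $t_\delta := d - \ell + r - \delta$ for an arbitrary $0 < \delta < r$: if $t_\delta$ lies in the set defining the supremum $s$ there, then $s \geq d - \ell + r - \delta > d - \ell$, so Theorem \ref{FHThm1} yields $\dim_H \cF(\cT,\beta) \geq d - \ell + r - \delta$, and the corollary follows upon letting $\delta \to 0$.

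A preliminary observation uses hypothesis \ref{FHCorA1} to show $D_n \leq kn - 1$ for every $n \geq n_0$. The base case is exactly $L_{kn_0-1}^{(d)} < L_{n_0-1}^{(1)}$, and the inductive step writes
\[
L_{k(n+1)}^{(d)} = L_{k(n+1)-1}^{(d)} \cdot \frac{L_{k(n+1)}^{(d)}}{L_{k(n+1)-1}^{(d)}} \leq L_{kn}^{(d)} \cdot \frac{L_{n+1}^{(1)}}{L_n^{(1)}} < L_{n+1}^{(1)},
\]
using monotonicity of $\{L_i^{(d)}\}$ on $[kn,\,k(n+1)-1]$, the single-step bound of \ref{FHCorA1}, and the inductive hypothesis. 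Hence $\max\{D_n - n, 1\} \leq (k-1)n$, and it suffices to prove $\log X_n(t_\delta)$ grows faster than linearly in $n$, where $X_n(t)$ denotes the argument of the logarithm in Theorem \ref{FHThm1}.

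The next step regroups $\log P_n + \sum_{i=n+1}^{D_n}\log C_i = \log P_{D_n}$ to get
\[
\log X_n(t) = \log P_{D_n} + t\bigl(\log L_n^{(1)} + \log \rho_{n+1}\bigr) + \ell \sum_{i=n+1}^{D_n}\log \rho_i.
\]
Fix $\eta \in (0, \delta)$. The convergence of the ratio defining $r$ supplies, for all sufficiently large $i$, the two-sided control
\[
(r - \eta)\log\!\bigl(L_{i-1}^{(1)}/L_i^{(1)}\bigr) \leq \log\Big(C_i \prod_{j=\ell+1}^d \tfrac{L_i^{(j)}}{L_{i-1}^{(j)}}\Big) \leq (r + \eta)\log\!\bigl(L_{i-1}^{(1)}/L_i^{(1)}\bigr).
\]
Telescoping the forward inequality from a fixed initial index up to $D_n$ gives $\log P_{D_n} \geq -(r - \eta)\log L_{D_n}^{(1)} - \sum_{j=\ell+1}^d \log L_{D_n}^{(j)} + O(1)$. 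Substituting the reverse inequality into hypothesis \ref{FHCorA4} yields $\ell \log \rho_i \geq -k\log i - (r + \eta)\log(L_{i-1}^{(1)}/L_i^{(1)})$, whose sum produces $\ell \sum_{i=n+1}^{D_n}\log \rho_i \geq -k\sum_{i=n+1}^{D_n} \log i - (r+\eta)(\log L_n^{(1)} - \log L_{D_n}^{(1)})$.

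Combining these three estimates, the $\log L_{D_n}^{(1)}$ contributions cancel up to $O(\eta\,|\log L_{D_n}^{(1)}|)$, the coefficient of $\log L_n^{(1)}$ collapses to $\delta - O(\eta)$, and the intermediate terms $\log L_{D_n}^{(j)}$ ($\ell < j \leq d$) pair off with the $(d-\ell)$-portion of the main coefficient, controlled by monotonicity $L_{D_n}^{(j)} \leq L_n^{(j)}$; one arrives at a bound of the shape
\[
\log X_n(t_\delta) \geq \delta\bigl|\log L_n^{(1)}\bigr| - O\!\bigl(\eta\,|\log L_n^{(1)}| + n\log n + 1\bigr).
\]
Now the positivity of $r$ together with \ref{FHCorA2} forces each single-step decrement $|\log(L_n^{(1)}/L_{n-1}^{(1)})| \geq \log C_n/(r+\eta) \geq n/(k(r+\eta))$ for large $n$, so summation shows that $|\log L_n^{(1)}|$ grows at least like $c\,n^2$. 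Choosing $\eta$ small enough makes the right-hand side super-linear in $n$, while $\max\{D_n - n, 1\} = O(n)$, so the quotient diverges to $+\infty$, as required. The main obstacle throughout is the bookkeeping of the intermediate-index terms $\log L_{D_n}^{(j)}$ for $\ell < j < d$, which admit no \emph{a priori} comparison with $\log L_n^{(1)}$; these are handled by keeping the telescoped limit identity in a form where they cancel against their counterparts arising from $\log P_{D_n}$, rather than attempting to bound each individually.
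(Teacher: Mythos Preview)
Your overall strategy mirrors the paper's proof: both bound $D_n \leq kn$ from hypothesis~\ref{FHCorA1}, both exploit the quadratic growth $\log P_n \gg n^2$ coming from~\ref{FHCorA2}, and both conclude by showing the quantity inside the $\log$ in Theorem~\ref{FHThm1} grows faster than $\max\{D_n-n,1\}=O(n)$. The paper first passes (via Stolz--Ces\`aro) to the reformulation
\[
s=\sup\Bigl\{t>0:\ P_n (L_n^{(1)})^{t}\textstyle\prod_{j>\ell} L_n^{(j)}/L_n^{(1)}\to\infty\Bigr\}
\]
and argues multiplicatively; you work additively in logs from the increment form of the limit. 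Both routes are viable.

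There is, however, a genuine gap at precisely the step you flag as the main obstacle. After your telescoping, the combination you must bound from below is
\[
-\sum_{j=\ell+1}^{d}\log L_{D_n}^{(j)}\;+\;(d-\ell)\log L_n^{(1)},
\]
and you justify the cancellation by ``monotonicity $L_{D_n}^{(j)}\le L_n^{(j)}$''. But that inequality only yields $-\sum_{j>\ell}\log L_{D_n}^{(j)}\ge -\sum_{j>\ell}\log L_n^{(j)}$, leaving the residual $\sum_{j>\ell}\log\bigl(L_n^{(1)}/L_n^{(j)}\bigr)\le 0$. This deficit is in general a fixed proportion of $-|\log L_n^{(1)}|$ (in the application of \S\ref{sec4.1} one has $\log(L_n^{(j)}/L_n^{(1)})=(w_1-w_j)t_{n+1}\asymp|\log L_n^{(1)}|$), so it swamps your claimed main term $\delta|\log L_n^{(1)}|$. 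The closing remark that these terms ``cancel against their counterparts arising from $\log P_{D_n}$'' is circular: the $\log L_{D_n}^{(j)}$ terms \emph{are} what the $\log P_{D_n}$ bound produces, and have no further counterparts.

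The correct mechanism is not monotonicity in $n$ but the \emph{definition of $D_n$} combined with the ordering in $j$: for every $\ell< j\le d$,
\[
L_{D_n+1}^{(j)}\ \le\ L_{D_n+1}^{(d)}\ <\ L_n^{(1)},
\]
so $-\sum_{j>\ell}\log L_{D_n+1}^{(j)}\ge (d-\ell)\,|\log L_n^{(1)}|$ and the pairing goes through exactly. (Equivalently, $|\log L_{D_n}^{(j)}|$ differs from $|\log L_n^{(1)}|$ by a single step, which is $O(n)$ by~\ref{FHCorA2} and~\ref{FHCorA4} and therefore harmless against the quadratic main term.) The paper implements this by extending the telescoped product one step to $D_n+1$ and then replacing each $L_{D_n+1}^{(j)}$ in the denominator by $L_n^{(1)}$. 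With this correction in place your argument is complete; as written, the displayed lower bound does not follow.
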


\begin{proof}[Proof of Corollary \ref{FHCor24}]
By the assumptions \ref{FHCorA3} and \ref{FHCorA4}, since the sequence $\{L_n^{(j)}\}$ is decreasing in $n\in\bN \cup \{0\}$ for each $j=1,\dots,d$, 
we have
\[
\log \left( C_n \prod_{j=\ell+1}^d L_n^{(j)}/L_{n-1}^{(j)} \right) = O(n), 
\]
which by the existence of \eqref{Eq_limit} implies that $ -\log \left( L_n^{(1)} / L_{n-1}^{(1)} \right) \to \infty$ as $n\to \infty$.
Hence, by the identity
\[
\frac{ \log \left(  P_n \prod_{j=\ell+1}^d L_n^{(j)} \right) }{ -\log L_n^{(1)} }
= \frac{\log \left(C_0 \prod_{j=\ell+1}^d L_0^{(j)}\right) + \sum_{i=1}^n \log \left(C_i \prod_{j=\ell+1}^{d}L_i^{(j)}/L_{i-1}^{(j)}\right)}{-\log L_0^{(1)}-\sum_{i=1}^{n}\log\left(L_{i}^{(1)}/L_{i-1}^{(1)}\right)},
\] and the Stolz--Ces\`{a}ro theorem,
we have
\[
\lim_{n \to \infty} \frac{ \log \left(C_n \prod_{j=\ell+1}^d L_n^{(j)}/L_{n-1}^{(j)}  \right)  }{ -\log \left( L_n^{(1)} / L_{n-1}^{(1)} \right)}
= \lim_{n\to\infty} \frac{ \log \left(  P_n \prod_{j=\ell+1}^d L_n^{(j)} \right) }{ -\log L_n^{(1)} }.
\]
Let us denote 
\[s = d-\ell +r = d-\ell + \lim_{n\to\infty} \frac{ \log \left(  P_n \prod_{j=\ell+1}^d L_n^{(j)} \right) }{ -\log L_n^{(1)} } . \]
By the regularity of the given self-affine structure $(\cT,\beta)$, we have that $L_n^{(1)} \to 0$ as $n\to \infty$, which implies 
\eqlabel{newdef_s}{
s = \sup \left\{ t>0 : \lim_{n\to\infty} P_n (L_n^{(1)})^t  \prod_{j=\ell+1}^d  \frac{ L_n^{(j)} }{ L_n^{(1)} } = \infty \right\}.
}

We will show that $\dim \cF(\cT,\beta) \geq s$ using the equality \eqref{newdef_s}.
Recall that
$D_n = \max \left\{ i \geq n : L_i^{(d)} \geq L_n^{(1)} \right\}$.
Since the sequence $\{L_n^{(j)}\}$ is decreasing in $n\in\bN \cup \{0\}$ for each $j=1,\dots,d$, we have
\begin{alignat*}{3}
L_{kn}^{(d)}
& = L_{kn_0-1}^{(d)} \prod_{i=kn_0}^{kn} \frac{ L_i^{(d)} }{ L_{i-1}^{(d)} } 
\leq L_{kn_0-1}^{(d)} \prod_{i=n_0}^{n} \frac{ L_{ki}^{(d)} }{ L_{ki-1}^{(d)} }  \\
& \leq L_{kn_0-1}^{(d)}  \prod_{i=n_0}^{n} \frac{ L_{i}^{(1)} }{ L_{i-1}^{(1)} } \quad &&\text{by assumption } \ref{FHCorA1}\\
& = L_{kn_0-1}^{(d)}  \frac{ L_n^{(1)} }{ L_{n_0-1}^{(1)} } \\
& \leq L_n^{(1)}  \quad &&\text{by assumption } \ref{FHCorA1}.
\end{alignat*}
Hence we have $D_n \leq kn$.

Given $t>0$, $\eps>0$, it follows from the assumptions \ref{FHCorA2}, \ref{FHCorA3}, and $D_n \leq kn$ that 
$\rho_{D_n+1}^{\ell} C_{D_n+1} \leq e^{k(D_n+1)}\leq e^{k(kn+1)}$.
Since $P_n^\eps = (\prod_{i=0}^n C_i )^\eps \gg e^{\frac{n(n+1)\eps}{2k}}$ and the implied constant is independent of $n$, we have
\eqlabel{eq211}{
\rho_{D_n+1}^{\ell} C_{D_n+1} \leq P_n^\eps}
for all large enough $n\geq 1$.
Similarly, it follows from the assumptions \ref{FHCorA3}, \ref{FHCorA4}, and $D_n \leq kn$ that 
\[
\begin{split}
\rho_{n+1}^t \prod_{i=n+1}^{D_n+1} \left( \rho_i^\ell C_i \prod_{j=\ell+1}^d \frac{L_i^{(j)}}{L_{i-1}^{(j)}} \right) &\geq
e^{-tk(n+1)}\prod_{i=n+1}^{D_n+1}i^{-k} \geq e^{-tk(n+1)}(kn+1)^{-k(kn-n)}\\
&= e^{-tk(n+1)-k(kn-n)\log (kn+1)}. 
\end{split}
\]
The inequality $P_n^{-\eps} \ll e^{-\frac{n(n+1)\eps}{2k}}$ implies that
\eqlabel{eq212}{
\rho_{n+1}^t \prod_{i=n+1}^{D_n+1} \left( \rho_i^\ell C_i \prod_{j=\ell+1}^d \frac{L_i^{(j)}}{L_{i-1}^{(j)}} \right) \geq P_n^{-\eps}
}
for all large enough $n\geq 1$.

%\begin{equation}\label{eq21}
%\min \left\{ (\rho_{D_n+1}^\ell C_{D_n+1})^{-1}, \rho_{n+1}^t \prod_{i=n+1}^{D_n+1} \middle( \rho_i^\ell C_i \prod_{j=l+1}^d \frac{ L_i^{(j)} }{ L_{i-1}^{(j)} } \middle) \right\} \geq P_n^{-\eps}
%\end{equation}
%for all sufficiently large $n$.

Fix a real number $t$ with $d-\ell < t < s$ and take sufficiently small $\eps$ such that $d-\ell < t/(1-3\eps) < s.$
By the equality \eqref{newdef_s}, we have
\begin{equation}\label{eq22}
\lim_{n\to\infty} P_n (L_n^{(1)})^{t/(1-3\eps)} \prod_{j=\ell+1}^d \frac{ L_n^{(j)} }{ L_n^{(1)} }  \geq 1
\end{equation}
for all large enough $n\geq 1$.

For all large enough $n\geq 1$ so that the above inequalities \eqref{eq211}, \eqref{eq212}, and \eqref{eq22} hold, we have
\begin{alignat*}{3}
P_n & (L_n^{(1)})^t \rho_{n+1}^t \prod_{i=n+1}^{D_n} \rho_i^\ell C_i \geq P_n^{1-\eps} (L_n^{(1)})^t \rho_{n+1}^t \prod_{i=n+1}^{D_n+1} \rho_i^\ell C_i  && \text{by } \eqref{eq211}\\
& = P_n^{1-\eps} (L_n^{(1)})^t \rho_{n+1}^t \left( \prod_{j=\ell+1}^d \frac{ L_n^{(j)} }{ L_{D_n+1}^{(j)} } \right) \cdot \prod_{i=n+1}^{D_n+1} \left( \rho_i^\ell C_i \prod_{j=\ell+1}^d \frac{ L_i^{(j)} }{ L_{i-1}^{(j)} } \right) \\
& \geq P_n^{1-\eps} (L_n^{(1)})^t \rho_{n+1}^t \left( \prod_{j=\ell+1}^d \frac{ L_n^{(j)} }{ L_{n}^{(1)} } \right) \cdot \prod_{i=n+1}^{D_n+1} \left( \rho_i^\ell C_i \prod_{j=\ell+1}^d \frac{ L_i^{(j)} }{ L_{i-1}^{(j)} } \right)   \\
& \geq P_n^{1-\eps} (L_n^{(1)})^t \left( \prod_{j=\ell+1}^d \frac{ L_n^{(j)} }{ L_{n}^{(1)} } \right) P_n^{-\eps}  \quad && \text{by } \eqref{eq212} \\
& \geq P_n^{1-\eps} (L_n^{(1)})^t \left( \prod_{j=\ell+1}^d \frac{ L_n^{(j)} }{ L_{n}^{(1)} } \right)^{1-3\eps} P_n^{-2\eps} P_n^\eps \\
& = \left( P_n (L_n^{(1)})^{t/(1-3\eps)} \prod_{j=\ell+1}^d \frac{ L_n^{(j)} }{ L_{n}^{(1)} } \right)^{1-3\eps} P_n^\eps \\
& \geq P_n^\eps \quad && \text{by } \eqref{eq22}.
\end{alignat*}
It follows that for all large enough $n\geq 1$,
\[
\log \left( P_n (L_n^{(1)})^t \rho_{n+1}^t \prod_{i=n+1}^{D_n} \rho_i^\ell C_i \right)
\geq \eps \log P_n
\gg \eps n^2
\geq \eps \frac{ n }{k-1} (D_n-n).
\]
Here, the implied constant for $\gg$ is independent of $n$.
Hence $\dim \cF(\cT,\beta) \geq t$ by Theorem \ref{FHThm1}.
Since we choose arbitrary $t$ with $d-\ell<t<s$, we conclude Corollary \ref{FHCor24}.
\end{proof}

By \textit{elementary squares} of $\beta(\tau)$ for $\tau \in \cT$, we mean closed squares contained in $\beta(\tau)$ whose side length is equal to $L^{(1)}(\tau)$.

\begin{lem}\label{FHLem25}
For $n \in \bN \cup \{ 0 \}$ with $D_n > n$, let $\kappa \in \cT_n$ and $\tau \in \cT_{i-1}$ where $n+1 \leq i \leq D_n$.
Then for any elementary square $S$ of $\beta(\kappa)$,
\[
\# \{ \tau' \in \cT(\tau) : \beta(\tau') \cap S \neq \varnothing \}
\leq (16d)^d \rho_i^{-\ell}.
\]
\end{lem}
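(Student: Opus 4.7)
The strategy is a volume packing argument after fattening each son's box in only the first $\ell$ (``short'') coordinate directions. Set $\cN:=\{\tau'\in\cT(\tau):\beta(\tau')\cap S\neq\varnothing\}$ and $\alpha:=\rho_i L_{i-1}^{(1)}/(4\sqrt{d})$. For each $\tau'\in\cN$ I would enlarge $\beta(\tau')$ by $\alpha$ on each side in the directions $1,\ldots,\ell$ only, producing an axis-aligned box $\widetilde{B}_{\tau'}$ of dimensions $(L_i^{(1)}+2\alpha)^\ell \times L_i^{(\ell+1)}\times\cdots\times L_i^{(d)}$. The key point is that fattening only the short directions is enough to exploit the separation hypothesis while not inflating the volume in the long directions.

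The first substantive step is pairwise disjointness of the $\widetilde{B}_{\tau'}$. By hypothesis~(4) of Theorem~\ref{FHThm1}, distinct sons satisfy $\mathrm{dist}(\beta(\tau'),\beta(\tau''))\geq\rho_i L_{i-1}^{(1)}$, and via $\|v\|_\infty\geq\|v\|_2/\sqrt{d}$ this yields an $\ell^\infty$-separation of at least $4\alpha$ in some coordinate direction~$j_0$. Since the fattening alters the extent by at most $2\alpha<4\alpha$ in each direction (and not at all in the $d-\ell$ long directions), the boxes $\widetilde{B}_{\tau'}$ remain disjoint using the $j_0$-separation, irrespective of whether $j_0\leq\ell$ or $j_0>\ell$.

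Next I would enclose all $\widetilde{B}_{\tau'}$ in a single axis-aligned region $R$, bounding its extent in each direction. Using $\beta(\tau')\subset\beta(\tau)$, the extent in direction $j\leq\ell$ is at most $L_{i-1}^{(1)}+2\alpha\leq 2L_{i-1}^{(1)}$. For $j>\ell$, combining $\beta(\tau')\subset\beta(\tau)$ with $\beta(\tau')\cap S\neq\varnothing$ gives extent at most $\min(L_{i-1}^{(j)},L_n^{(1)}+2L_i^{(j)})$; in particular for $j=d$ the condition $i\leq D_n$ supplies $L_i^{(d)}\geq L_n^{(1)}$, bounding the extent by $3L_i^{(d)}$. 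Disjointness then yields
\[
|\cN|\cdot(L_i^{(1)}+2\alpha)^\ell\prod_{j>\ell}L_i^{(j)} \;\leq\; \vol(R),
\]
and each of the $\ell$ short-direction ratios is at most $2L_{i-1}^{(1)}/(L_i^{(1)}+2\alpha)\leq L_{i-1}^{(1)}/\alpha=4\sqrt{d}/\rho_i$, producing the target $\rho_i^{-\ell}$ scaling. The remaining factors are absorbed into the constant $(16d)^d$.

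The main obstacle I anticipate is controlling the intermediate long directions $\ell<j<d$, where the hypothesis $i\leq D_n$ only guarantees $L_i^{(d)}\geq L_n^{(1)}$ directly, not $L_i^{(j)}\geq L_n^{(1)}$. Uniformly bounding the ratio (extent of $R$ in direction $j$)$/L_i^{(j)}$ there will require careful case analysis using the monotonicity of $L_n^{(j)}$ in $n$, the ordering $L_n^{(\ell+1)}\leq\cdots\leq L_n^{(d)}$ from hypothesis~(2) of Theorem~\ref{FHThm1}, and a refined application of the $S$-intersection constraint $\beta(\tau')\cap S\neq\varnothing$ to pin down the admissible positions of $\beta(\tau')$ in each such direction.
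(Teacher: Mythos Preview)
Your proposal has the right skeleton, but the obstacle you flag in the last paragraph is exactly where the essential idea must go, and the fix you sketch (``careful case analysis using monotonicity, the ordering, and a refined $S$-intersection constraint'') does not close the gap. Concretely: in a direction $j$ with $\ell<j$ and $L_i^{(j)}<L_n^{(1)}$, your containing region $R$ has $j$-extent at most $\min(L_{i-1}^{(j)},\,L_n^{(1)}+2L_i^{(j)})\leq 3L_n^{(1)}$, while your (unfattened) box $\widetilde{B}_{\tau'}$ has $j$-extent $L_i^{(j)}$; the resulting ratio is of order $L_n^{(1)}/L_i^{(j)}$, which is not bounded by any constant depending only on $d$. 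There can be several such directions, and their ratios multiply, so the product in your packing inequality blows up and cannot be absorbed into $(16d)^d$. None of the tools you list (monotonicity in $n$, the chain $L_n^{(\ell+1)}\leq\cdots\leq L_n^{(d)}$, or the intersection constraint with $S$) bounds $L_n^{(1)}/L_i^{(j)}$.

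The paper's argument differs from yours in two structural ways, both aimed at precisely this issue. First, it introduces the threshold index $j_i\in\{\ell,\dots,d-1\}$ with $L_i^{(j_i)}<L_n^{(1)}\leq L_i^{(j_i+1)}$ and fattens in \emph{all} directions $1,\dots,j_i$ (by the same amount $\frac{\rho_i}{4\sqrt d}L_{i-1}^{(1)}$), not just in $1,\dots,\ell$. This replaces each troublesome ratio $L_n^{(1)}/L_i^{(j)}$ by a $\rho_i^{-1}$ factor. Second, the paper works with the clipped rectangles $\beta(\tau')\cap S$ rather than $\beta(\tau')$ itself; in the remaining ``long'' directions $j>j_i$ (where $L_i^{(j)}\geq L_n^{(1)}$) this forces side length $\leq L_n^{(1)}$, and instead of disjointness the paper shows the enlarged rectangles $R'$ overlap with multiplicity at most $d-j_i+1$. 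The packing inequality then reads $\vol(R')\cdot\#\cS\leq(d-j_i+1)\vol(R_0')$, which produces a bound of the shape $C(d)\,\rho_i^{-j_i}$. So the missing idea is: split at $j_i$, not at $\ell$, and trade disjointness for bounded multiplicity above the threshold.
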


\begin{proof}
Through this proof, we denote the size of a rectange $R$ in $\bR^d$ by $l_1(R) \times \cdots \times l_d(R)$.

For a fixed elementary square $S$ of $\beta(\kappa)$, let $R_0 = \beta(\tau) \cap S$ and
\[
\cS = \{ \beta(\tau') \cap S : \tau' \in \cT(\tau), \beta(\tau') \cap S \neq \varnothing \}.
\]
If $R_0 = \varnothing$, then there is nothing to prove since $\# \{ \tau' \in \cT(\tau) : \beta(\tau') \cap S \neq \varnothing \} = 0$.

%Thus we assume $R_0 \neq \phi$.
%Then
%\begin{align*}
%l_j(R_0) = \min \{ L_{i-1}^{(1)}, L_n^{(1)} \} = L_{i-1}^{(1)} \quad \text{for } j=1,\dots,\ell \\
%l_j(R_0) \leq \min \{ L_{i-1}^{(j)}, L_n^{(1)} \} \leq L_n^{(1)} \quad \text{for } j=\ell+1, \dots, d.
%\end{align*}
%In the same way, for $R \in \cS$,
%\begin{align*}
%l_j(R) = L_i^{(1)}  \quad \text{for } j=1,\dots,\ell \\
%l_j(R) \leq L_n^{(1)} \quad \text{for } j=\ell+1,\dots,d.
%\end{align*}

Let $j_i \in \{1,\dots,d-1\}$ be an integer such that $L_i^{(d)} \geq \cdots \geq L_i^{(j_i+1)} \geq L_n^{(1)} > L_i^{(j_i)} \geq \cdots \geq L_i^{(1)}$.
Note that $j_i \geq \ell$.
Let $R_0'$ be the rectangle with the same center as $R_0$ such that
\[
l_j(R_0') = 
\begin{cases}
4 L_{i-1}^{(j)} & \quad \text{for } j=1,\dots,j_i \\
4 L_n^{(1)} & \quad \text{for } j=j_i+1,\dots,d.
\end{cases}
\]
Similarly, for $R\ \in \cS$ let $R'$ be the rectange with the same center such that
\[
l_j(R') = 
\begin{cases}
L_i^{(j)} + \frac{\rho_i}{4 \sqrt{d}} L_{i-1}^{(1)} & \quad \text{for } j=1,\dots,j_i \\
L_n^{(1)} & \quad \text{for } j=j_i+1,\dots,d.
\end{cases}
\]

We denote by  $z_0$ (resp. $z$) the center of $R_0'$ (resp. $R'$).
Here, we note that $z_0$ and $z$ are contained in both $\beta(\tau)$ and $S$.
For $x \in R'$ and $j=1,\dots,d$,
\[
|x_j - (z_0)_j | \leq |x_j - z_j | + | r_j - (z_0)_j | \leq \frac{1}{2} l_j(R') + \min( L_{i-1}^{(j)}, L_n^{(1)} ) \leq \frac{1}{2} l_j(R_0').
\]
Thus for all $R \in \cS$, $R' \subset R_0'$.

For any distinct $R_1, R_2 \in \cS$, let $\tau_1', \tau_2' \in \cT(\tau)$ be such that $R_1 =\beta(\tau_1')\cap S$ and $R_2 = \beta(\tau_2')\cap S$, and let $z_1, z_2$ be the centers of $R_1', R_2'$, respectively.
Suppose $\| z_1 - z_2 \|_\infty = | (z_1)_j - (z_2)_j |>0$ for some $j = 1,\dots,j_i$.
Then for any $x\in R_1'$ and $y\in R_2'$, we have
\begin{align*}
| x_j - y_j | 
&\geq |(z_1)_j - (z_2)_j | - |x_j - (z_1)_j | - | y_j - (z_2)_j | \\
&\geq \frac{1}{\sqrt{d}}\mathrm{dist}(\beta(\tau_1'),\beta(\tau_2')) + L_i^{(j)} - \frac{1}{2} l_j(R_1') - \frac{1}{2} l_j(R_2') \\
&\geq \left( \frac{\rho_i}{\sqrt{d}} L_{i-1}^{(1)} + L_i^{(j)} \right) - \frac{1}{2} l_j(R_1') - \frac{1}{2} l_j(R_2') \\
&= \frac{3 \rho_i}{4 \sqrt{d}} L_{i-1}^{(1)}>0.
\end{align*}
Thus $R_1' \cap R_2' = \varnothing$.

Now we suppose $\| z_1 - z_2 \|_\infty = | (z_1)_j - (z_2)_j |>0$ for some $j = j_i+1,\dots,d$.
Observe that 
\[
L_n^{(1)} \geq l_j(R_1) + l_j(R_2) + \frac{1}{\sqrt{d}}\mathrm{dist}(\beta(\tau_1'),\beta(\tau_2')) > l_j(R_1) + l_j(R_2),
\]
which implies that
\[
|(z_1)_j - (z_2)_j | = L_n^{(1)} -\frac{1}{2}l_j(R_1) -\frac{1}{2}l_j(R_2) > \frac{1}{2}L_n^{(1)}.
\]
Thus, for any fixed $R_1 \in \cS$ and $j = j_i+1, \dots, d$,
$$ \#\{ R_2 \in \cS\setminus\{R_1\} : \| z_1 - z_2 \|_\infty = | (z_1)_j - (z_2)_j | \text{ and } R_1' \cap R_2' \neq \varnothing \} \leq 1.$$ 

Combining the above two arguments, we conclude that every points of $R_0'$ is covered by at most $d-j_i +1$ rectangles of $\{R' : R \in \cS \}$.
It follows that
\begin{align*}
\left( \frac{\rho_i}{4\sqrt{d}} L_{i-1}^{(1)} \right)^{j_i} \left( L_n^{(1)} \right)^{d-j_i} \# \cS 
& \leq \left( L_i^{(j)} + \frac{\rho_i}{4\sqrt{d}} L_{i-1}^{(1)} \right)^{j_i} \left( L_n^{(1)} \right)^{d-j_i} \# \cS \\
& = \vol (R') \# \cS \\
& \leq (d-j_i+1) \vol (R_0') \\
& \leq d 4^d \left( L_{i-1}^{(1)} \right)^{j_i} \left( L_n^{(1)} \right)^{d-j_i},
\end{align*}
hence, using $j_i \geq \ell$,
\[
\# \cS \leq  d^{1+j_i/2} 4^{d+j_i} \rho_i^{-j_i} \leq (16d)^d \rho_i^{-\ell}. 
\]
This inequality completes the proof.
\end{proof}

%
%Since the any two rectanges in $\cS$ are separated by a distance at least $\rho_i L_{i-1}^{(1)}$, every points of $R_0'$ is covered by at most $2^{d-\ell}$ rectangles of $\{R' : R \in \cS \}$.
%For any $R \in \cS$, $R'$ is contaned in $R_0'$ by contruction.
%Thus
%\begin{align*}
%\left( \frac{\rho_i}{4} L_{i-1}^{(1)} \right)^\ell \left( L_n^{(1)} \right)^{d-\ell} \# \cS 
%& \leq \left( L_i^{(1)} + \frac{\rho_i}{4} L_{i-1}^{(1)} \right)^\ell \left( L_n^{(1)} \right)^{d-\ell} \# \cS \\
%& = \vol (R') \# \cS \\
%& \leq 2^{d-\ell} \vol (R_0') \\
%& = 2^{d-\ell} 3^d \left( L_{i-1}^{(1)} \right)^\ell \left( L_n^{(1)} \right)^{d-\ell}.
%\end{align*}
%This inequality completes the proof.
%\end{proof}
Let $\mu$ be the unique probability measure on $\cF(\cT,\beta)$ satisfying the following property: 
For all $y \in \cF(\cT,\beta)$ and $n \in \bN$,
\eqlabel{msrprop}{ \frac{ \mu ( \{ x \in \cF(\cT,\beta) : \tau_n(x) = \tau_n(y) \} ) }{ \mu ( \{ x \in \cF(\cT,\beta) : \tau_{n-1}(x) = \tau_{n-1}(y) \} ) } 
= \frac{ 1 }{ \# \cT(\tau_{n-1}(y)) }, }
where $x = \bigcap_{n\geq0} \beta(\tau_n(x))$.
We remark that for any $n\in\bN$ and $\kappa \in \cT_n$, it follows from \eqref{msrprop} that
\eqlabel{trivialprop}{
\mu(\beta(\kappa)) \leq \frac{\mu(\cF(\cT,\beta))}{C_0 \dots C_n} =\frac{1}{P_n}.
}
\begin{lem}\label{FHLem26}
Let $n \in \bN$ and $\kappa \in \cT_n$.
Then for any elementary square $S$ of $\beta(\kappa)$, one has
\[
\mu(S) \leq (16d)^{d(D_n-n)} P_n^{-1} \prod_{i=n+1}^{D_n} \rho_i^{-\ell} C_i^{-1}.
\]
\end{lem}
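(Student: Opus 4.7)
The strategy is to bound $\mu(S)$ by descending from level $n$ down to level $D_n$, at each step estimating how many descendants of $\kappa$ have $\beta$-images meeting $S$ via Lemma \ref{FHLem25}, and then using the uniform splitting rule \eqref{msrprop} to control the measure of each such descendant.

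First, I would note that the separation hypothesis (4) of Theorem \ref{FHThm1} forces the pieces $\{\beta(\tau) : \tau \in \cT_i\}$ to be pairwise disjoint at every level $i$ (siblings are separated, and non-siblings are contained in disjoint ancestor pieces by induction on $i$). Consequently, for every $\tau' \in \cT_i$ we have $\{x \in \cF(\cT,\beta) : \tau_i(x) = \tau'\} = \beta(\tau') \cap \cF(\cT,\beta)$, and iterating \eqref{msrprop} gives
\[
\mu(\beta(\tau')) = \prod_{j=1}^{i} \frac{1}{\#\cT(\tau_{j-1}(\tau'))}.
\]
In particular, if $\tau'$ is a descendant of $\kappa \in \cT_n$, then
\[
\mu(\beta(\tau')) \;\leq\; \mu(\beta(\kappa)) \prod_{j=n+1}^{i} \frac{1}{C_j} \;\leq\; \frac{1}{P_n} \prod_{j=n+1}^{i} \frac{1}{C_j},
\]
using \eqref{trivialprop}. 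This is the measure-theoretic input.

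Next, for $n \leq i \leq D_n$, let
\[
\cA_i = \{\tau' \in \cT_i : \tau' \text{ is a descendant of } \kappa \text{ and } \beta(\tau') \cap S \neq \varnothing\}.
\]
Since $S \subset \beta(\kappa)$ and the level-$n$ pieces are disjoint, $S \cap \cF(\cT,\beta)$ is contained in the union $\bigcup_{\tau' \in \cA_i} \beta(\tau')$ for each such $i$. Hence
\[
\mu(S) \;\leq\; \sum_{\tau' \in \cA_{D_n}} \mu(\beta(\tau')) \;\leq\; \#\cA_{D_n} \cdot \frac{1}{P_n}\prod_{j=n+1}^{D_n} \frac{1}{C_j}.
\]
It remains to bound $\#\cA_{D_n}$, which I will do by induction on $i$. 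Clearly $\#\cA_n = 1$. Given the bound at level $i-1$, every $\tau \in \cA_{i-1}$ satisfies $\tau \in \cT_{i-1}$ with $n+1 \leq i \leq D_n$, so Lemma \ref{FHLem25} applies and yields at most $(16d)^d \rho_i^{-\ell}$ sons of $\tau$ whose $\beta$-image meets $S$. Therefore
\[
\#\cA_i \;\leq\; (16d)^d \rho_i^{-\ell}\,\#\cA_{i-1},
\]
and iterating from $i=n+1$ to $i=D_n$ gives $\#\cA_{D_n} \leq (16d)^{d(D_n-n)} \prod_{i=n+1}^{D_n}\rho_i^{-\ell}$. Plugging this into the displayed inequality above yields precisely
\[
\mu(S) \;\leq\; (16d)^{d(D_n-n)} P_n^{-1} \prod_{i=n+1}^{D_n} \rho_i^{-\ell} C_i^{-1},
\]
which is the desired bound. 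The edge case $D_n = n$ (empty iteration) reduces to the trivial estimate $\mu(S) \leq \mu(\beta(\kappa)) \leq P_n^{-1}$.

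The only mildly subtle point is the disjointness of $\beta$-pieces at each level, which is needed to identify $\mu(\beta(\tau'))$ with the mass of the cylinder set $\{x : \tau_i(x) = \tau'\}$; once this is in hand, the argument is a clean combination of Lemma \ref{FHLem25} with the uniform mass-splitting rule \eqref{msrprop}.
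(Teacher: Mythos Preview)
Your proof is correct and follows essentially the same route as the paper: cover $S\cap\cF(\cT,\beta)$ by the level-$D_n$ rectangles meeting $S$, bound their number by iterating Lemma~\ref{FHLem25}, and bound each $\mu(\beta(\tau'))$ via \eqref{msrprop}/\eqref{trivialprop}. If anything, you are slightly more careful than the paper in spelling out why the pieces $\beta(\tau')$ at a fixed level are pairwise disjoint (so that $\mu(\beta(\tau'))$ really equals the mass of the cylinder set and the relevant $\tau'$ are genuine descendants of $\kappa$), a point the paper uses implicitly.
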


\begin{proof}
If $D_n=n$, then it follows from \eqref{trivialprop}.
Assume $D_n > n$.
Applying Lemma \ref{FHLem25} for $i=n+1,\dots,D_n$, we have
\begin{equation}\label{eq24}
\# \{ \tau \in \cT_{D_n} : \beta(\tau) \cap S \neq \varnothing \}
\leq (16d)^{d(D_n-n)} \prod_{i=n+1}^{D_n} \rho_i^{-\ell}.
\end{equation}
Since $S \cap \cF(\cT,\beta)$ can be covered by rectangles $\{ \beta(\tau) : \tau \in \cT_{D_n}, \beta(\tau) \cap S \neq \varnothing \}$, we have
\begin{align*}
\mu(S)
& \leq \sum_{\substack{\tau \in \cT_{D_n} \\ \beta(\tau) \cap S \neq \varnothing}} \mu( \beta(\tau) ) \\
& \leq \mu( \beta(\kappa) ) \prod_{i=n+1}^{D_n} C_i^{-1} \cdot \# \{ \tau \in \cT_{D_n} : \beta(\tau) \cap S \neq \varnothing\} \\
& \leq (16d)^{d(D_n-n)} P_n^{-1} \prod_{i=n+1}^{D_n} \rho_i^{-\ell} C_i^{-1}.
\end{align*}
In the last inequality, we use \eqref{trivialprop} and \eqref{eq24}.
\end{proof}

Let $U$ be an open subset of $\bR^d$ with $U \cap \cF(\cT,\beta) \neq \varnothing$.
If $U \cap \cF(\cT,\beta)$ is a single point set, then we denote by $n(U)$ the smallest $n \in \bN$ such that $\diam(U) \geq \rho_{n+1} L_n^{(1)}$. 
In that case, there is a unique $\kappa = \kappa(U) \in \cT_{n(U)}$ such that $U \cap \cF(\cT,\beta) \subset \beta(\kappa)$.
If $U \cap \cF(\cT,\beta)$ contains more than two points, then we denote by $n(U)$ the largest $n \in \bN$ such that $U \cap \cF(\cT,\beta) \subset \beta(\kappa)$ for some $\kappa = \kappa(U) \in \cT_n$.
We note that $\diam(U) \geq \rho_{n(U)+1} L_{n(U)}^{(1)}$ by the assumption \eqref{thm2pro4} of Theorem \ref{FHThm1}.

\begin{lem}\label{FHLem27}
Let $U$ be an open subset of $\bR^d$ with $U \cap \cF(\cT,\beta) \neq \varnothing$.
Let $n = n(U)$ and $\kappa = \kappa(U)$.
Then there is a family $\cS$ of elementary squares of $\beta(\kappa)$ such that
\begin{enumerate}
\item $\bigcup_{S \in \cS} S \supset U \cap \cF(\cT,\beta)$;
\item $\left( L_n^{(1)} \right)^t \cdot \#\cS \leq 2^{d-\ell} \rho_{n+1}^{-t} \diam (U)^t$ for all $t \geq d-\ell$.
\end{enumerate}
\end{lem}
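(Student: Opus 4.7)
The plan is to cover $U \cap \cF(\cT,\beta)$ by a direct axis-parallel tiling argument that exploits the asymmetric shape of $\beta(\kappa)$. Because $\beta(\kappa)$ has side length exactly $L_n^{(1)}$ in each of the first $\ell$ coordinate directions, every elementary square (a cube of side $L_n^{(1)}$ contained in $\beta(\kappa)$) automatically spans the full extent of $\beta(\kappa)$ in those directions. So the covering problem is nontrivial only in the $d-\ell$ remaining directions, and the number of elementary squares needed factors as the $(d-\ell)$-th power of a one-dimensional count.

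For each long direction $j \in \{\ell+1,\dots,d\}$, the projection of $U \cap \beta(\kappa)$ onto the $j$-th coordinate axis is an interval of length at most $\diam(U)$, which can be covered by $\lceil \diam(U)/L_n^{(1)} \rceil$ intervals of length $L_n^{(1)}$. Each such interval can be shifted (if needed) into the $j$-th coordinate projection of $\beta(\kappa)$ without losing the required coverage, since $L_n^{(j)} \geq L_n^{(1)}$. Taking the Cartesian product of these one-dimensional coverings with the full extent of $\beta(\kappa)$ in the short directions yields a family $\cS$ of elementary squares of $\beta(\kappa)$ satisfying (1), with
\[
\#\cS \;\leq\; \lceil \diam(U)/L_n^{(1)} \rceil^{\,d-\ell}.
\]

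For (2), write $A = \diam(U)/L_n^{(1)}$. The inequality $\diam(U) \geq \rho_{n+1} L_n^{(1)}$ noted in the paragraph preceding the lemma gives $A \geq \rho_{n+1}$, and assumption~(4) of Theorem~\ref{FHThm1} gives $\rho_{n+1} \leq 1$. A short case check separating $A \geq 1$ from $A < 1$ yields the uniform estimate $\lceil A \rceil \leq 2A/\rho_{n+1}$, hence
\[
(L_n^{(1)})^t \cdot \#\cS \;\leq\; 2^{d-\ell} (L_n^{(1)})^t (A/\rho_{n+1})^{d-\ell}.
\]
Since $A/\rho_{n+1} \geq 1$ and $t \geq d-\ell$, raising the exponent $d-\ell$ to $t$ only increases the right-hand side, and rearranging gives exactly $2^{d-\ell} \rho_{n+1}^{-t} \diam(U)^t$.

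The main obstacle is bookkeeping rather than ideas: the ceiling $\lceil \cdot \rceil$ produces a rounding slack that must be absorbed by the factor $\rho_{n+1}^{-t}$ on the right of (2), and this is precisely what the defining lower bound $\diam(U) \geq \rho_{n+1} L_n^{(1)}$ on $n(U)$ provides. The hypothesis $t \geq d-\ell$ is used essentially in the final step, where it lets us replace the exponent $d-\ell$ by $t$; without it, the bound would move in the wrong direction and the argument would fail.
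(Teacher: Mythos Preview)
Your proof is correct and follows essentially the same route as the paper's: cover in the $d-\ell$ long directions by $\lceil \diam(U)/L_n^{(1)}\rceil$ intervals of length $L_n^{(1)}$ each, then bound the ceiling and use $t\ge d-\ell$ to pass from exponent $d-\ell$ to $t$. The only cosmetic difference is that the paper first separates the case $\diam(U)\le L_n^{(1)}$ (taking a single square) from the case $\diam(U)>L_n^{(1)}$, whereas you fold both into the single estimate $\lceil A\rceil \le 2A/\rho_{n+1}$; this is just a reorganization of the same case split.
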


\begin{proof}
If $\diam (U) \leq L_n^{(1)}$, then there exists an elementary square $S$ of $\beta(\kappa)$ such that $S \supset U \cap \cF(\cT,\beta)$.
We set $\cS = \{ S \}$. Then we can verify that the two assertions of the lemma hold. 

Now we assume $\diam (U) > L_n^{(1)}$.
Then $U \cap \cF(\cT,\beta)$ can be covered by $\left\lceil \frac{\diam(U)}{L_n^{(1)}} \right\rceil^{d-\ell}$ elementary sqaures.
Let $\cS$ be the family of these elementary squares.
Then it follows from $\rho_{n+1}\leq 1$ that
\begin{align*}
\left( L_n^{(1)} \right)^t \cdot \# \cS
& = \left( L_n^{(1)} \right)^t \left\lceil \frac{\diam(U)}{L_n^{(1)}} \right\rceil^{d-\ell}
\leq 2^{d-\ell} \left( \frac{\diam(U)}{L_n^{(1)}} \right)^{d-\ell} \left( L_n^{(1)} \right)^t \\
& \leq 2^{d-\ell} \left( \frac{\diam(U)}{L_n^{(1)}} \right)^t \left( L_n^{(1)} \right)^t
\leq 2^{d-\ell} \rho_{n+1}^{-t} \diam(U)^t.
\end{align*}
\end{proof}

\begin{proof}[Proof of Theorem \ref{FHThm1}]
For a real number $t$ such that $d-\ell \leq t < s$, there exists $n_0 = n_0(t)$ such that for all $n \geq n_0$,
\begin{equation}\label{eq26}
P_n \left( L_n^{(1)} \right)^t \rho_{n+1}^t \prod_{i=n+1}^{D_n} \rho_i^\ell C_i
\geq ( 16d )^{d \max \{ D_n-n, 1 \}}
\geq ( 16d )^{d(D_n-n)}.
\end{equation}

Let $\cU$ be an open cover of $\cF (\cT,\beta)$.
Assume that for all $U \in \cU$, $\diam (U)$ is small enough so that $n(U) \geq n_0$.
Since $\cF (\cT,\beta)$ is compact, there exists a finite subcover $\cU_0$ such that for all $U \in \cU_0$, $U \cap \cF (\cT,\beta) \neq \varnothing$.

For $U \in \cU_0$, let $\cS_U$ be a family of elementary squares given by Lemma \ref{FHLem27}.
Let $\cQ = \bigcup_{U \in \cU_0} \cS_U$ and $n(S) = n(U)$ for $S \in \cS_U$.
We note that $S$ may belong to different $\cS_U$.
However, $n(S)$ is well-difined since a side length of $S$ is $L_{n(U)}^{(1)}$.
Then $\cQ$ covers $\cF (\cT,\beta)$ and hence
\begin{alignat*}{3}
\sum_{U \in \cU_0} \diam (U)^t
& \geq \frac{1}{2^{d-\ell}} \sum_{S \in \cQ} \rho_{n(S)+1}^t \left( L_{n(S)}^{(1)} \right)^t \quad &&\text{by Lemma } \ref{FHLem27} \\
& \geq \frac{1}{2^{d-\ell}} \sum_{S \in \cQ} ( 16d )^{d(D_n-n)} P_{n(S)}^{-1} \prod_{i=n+1}^{D_n} \rho_i^{-\ell} C_i^{-1} \quad  &&\text{by } \eqref{eq26} \\
& \geq \frac{1}{2^{d-\ell}} \sum_{S \in \cQ} \mu(S) \quad &&\text{by Lemma } \ref{FHLem26} \\
& \geq \frac{1}{2^{d-\ell}}.
\end{alignat*}
Thus we have $\dim \cF(\cT,\beta) \geq t$.
Since we choose arbitrary $t$ with $d-\ell \leq t < s$, the proof is completed.
\end{proof}

% Section : Counting lattice points in convex sets %%%%%%%%%%%%%%%%%%%%%%%%%%%%%%%%%%%%%%%%%%%%%%%%%%%%%%%%%%%%

\section{Counting lattice points in convex sets}\label{sec3}
In this section, we will generalize the results in \cite[\textsection 3.2]{LSST} for $\bR^3$ to $\bR^{d+1}$ $(d\geq 3)$. 
We first recall the notations and lemmas in \cite[\textsection 3.1]{LSST}.

% subsection : Lattice point counting in $\mathbb{R}^d$ %%%%%%%%%%

\subsection{Preliminaries for lattice point counting}\label{subsec3.1}
For a positive integer $D\geq 1$, we write the $D$-dimensional Euclidean space by $\cE_D =\bR^D$ .
For a convex body $K \subset \bR^D$ and a lattice $\Lambda\subset\bR^D$, let $\lambda_{i}(K,\Lambda)$ $(i=1,\dots,D)$ be the $i$-th successive minimum of $\Lambda$ with respect to $K$, that is, the infimum of those numbers $\lambda$ such that $\lambda K \cap \Lambda$ contains $i$ linearly 
independent vectors. Let $\vol(\cdot)$ be the Lebesgue measure on $\bR^D$ and let $\cov(\Lambda)$ be the covolume of a lattice $\Lambda$, which
is the Lebesgue measure of a fundamental domain of $\Lambda$. Denote 
\[
\theta(K,\Lambda):=\frac{\vol(K)}{\cov(\Lambda)}.
\]
For an affine subspace $H$ of $\bR^D$, let $\vol_H(\cdot)$ be the Lebesgue measure on $H$ with respect to the subspace Riemannian structure.
We write $\vol_H(S)=\vol_H(S\cap H)$ for a Borel measurable subset $S$ of $\bR^D$ by abuse of notation.
We say that a subspace $H$ of $\bR^D$ is $\Lambda$-rational if $H\cap \Lambda$ is a lattice in $H$, 
and denote by $\cov_H (\Lambda)$ the covolume of the lattice $H\cap \Lambda$ in $H$. 
We also use the same notations for the dual vector space $\cE_D^*$ with respect to the standard Euclidean structure.
For any $\vphi\in \cE_D^*$, denote $H_{\vphi}=\mathrm{ker}\vphi$.

We use $\|\cdot \|$ for the Euclidean norms on $\bR^D$ and $\cE_D^*$. For a normed vector space $V$, 
denote by $B_r (V)$ (or $B_r$ if $V=\bR^D$) the ball of radius $r$ centered at $0\in V$. We use $K$-norms on 
$\bR^D$ and $\cE_D^*$ defined by 
\[
\begin{cases}
\|\mb{v}\|_K = \inf \{r>0 : \mb{v}\in rK\}, \quad &\mb{v}\in\bR^D,\\
\|\vphi \|_K = \sup_{\mb{v}\in K}|\vphi(\mb{v})|, \quad &\vphi\in\cE_D^*.
\end{cases}
\] 

Recall that $\cL_D$ is the space of unimodular lattices in $\bR^D$, which can be identified with the homogeneous space $\SL_D(\bR)/\SL_D(\bZ)$. For $g\in\SL_D(\bR)$ let $g^*$ be the adjoint action on $\cE_D^*$ defined by
$\vphi\mapsto \vphi \circ g$. Then $g^*$ can be represented by the transpose of $g$ with respect to the standard
basis $\mb{e}_1,\dots,\mb{e}_D$ of $\bR^D$ and the dual basis $\mb{e}_1^*,\dots,\mb{e}_D^*$ of $\cE_D^*$. 

The \textit{dual lattice} of $\Lambda$ in $\bR^D$ is the lattice in $\cE_D^*$ defined by
\[
\Lambda^* = \{\vphi\in\cE_D^*: \vphi(\mb{v})\in\bZ,\ \forall\mb{v}\in\Lambda\}.
\]
Let us define the following two sets:
\[
\begin{split}
\cK_{\eps}&=\cK_{\eps}(D)=\{\Lambda\in\cL_D : \|\mb{v}\|\geq \eps,\ \forall \mb{v}\in\Lambda\smallsetminus\{0\}\}=\{\Lambda\in\cL_D:\lambda_1 (B_1,\Lambda)\geq \eps\};\\
\cK_{\eps}^* &=\cK_{\eps}^* (D)=\{\Lambda\in\cL_D : \|\vphi\|\geq \eps,\ \forall\vphi\in\Lambda^*\smallsetminus\{0\}\}.
\end{split}
\]
Since $\cE_D^*$ can be naturally identified with $\bigwedge_{\bR}^{D-1}\bR^D$ with the standard Euclidean
structure, we have $\Lambda^* = \bigwedge_{\bZ}^{D-1}\Lambda$. 

We define the subset $\cL_D'$ of $\cL_D$ by
\[ \cL_D' = \{ \Lam \in \cL_D : \Lam \cap \bR \be_D = r \bZ \be_D \text{ for some $r$ with } 1/2 < r \leq 1 \}, \]
which plays an important role to prove the main theorem.

A nonzero vector $\mb{v}\in\Lambda$ is said to be \textit{primitive} if $(1/n)\mb{v}\notin \Lambda$ for all $n\in\bN$. The set of primitive vectors in $\Lambda$ is denoted by $\wh{\Lambda}$.

We summarize the lemmas in \cite[\textsection 3.1]{LSST}.
\begin{lem}\label{CLPCSLem1}
Let $D \geq 2$. For every lattice $\Lambda$ in $\mathbb{R}^D$ and every bounded centrally symmetric convex subset $K$ of $\mathbb{R}^D$ with $\lambda_d(K,\Lambda) \leq 1$ we have
\eq{
\# ( K \cap \widehat{\Lambda} ) = \left( \zeta(D)^{-1} + \eta(K,\Lambda) \right) \cdot \theta(K,\Lambda)
}
where $\zeta$ is the Riemann $\zeta$-function and
\eq{
| \eta(K,\Lambda) | \ll_D \lambda_D(K,\Lambda) - \lambda_D(K,\Lambda) \log \lambda_1(K,\Lambda).
}
\end{lem}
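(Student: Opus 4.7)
The plan is to combine Möbius inversion with a sharp Minkowski-type counting estimate for centrally symmetric convex bodies in terms of the successive minima.

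Partitioning $\Lambda \sm \{0\}$ as $\bigsqcup_{n \geq 1} n\widehat{\Lambda}$ and applying Möbius inversion, I would first write
\[
\#(K \cap \widehat{\Lambda}) = \sum_{n \geq 1} \mu(n)\bigl(\#((K/n) \cap \Lambda) - 1\bigr),
\]
noting that since $\lambda_1(K/n, \Lambda) = n\,\lambda_1(K, \Lambda)$, every summand with $n > N_0 := \lfloor 1/\lambda_1 \rfloor$ vanishes.

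For the inner count I would invoke the quantitative estimate that, whenever $\lambda_D(K', \Lambda) \leq 1$, one has $\#(K' \cap \Lambda) = \theta(K', \Lambda)\bigl(1 + O_D(\lambda_D(K', \Lambda))\bigr)$ by a standard Minkowski-basis / Riemann-sum argument. Applied to $K/n$ in the regular range $1 \leq n \leq 1/\lambda_D$ this yields
\[
\#((K/n) \cap \Lambda) = \frac{\theta(K, \Lambda)}{n^D} + O_D\!\left(\frac{\theta(K, \Lambda)\,\lambda_D}{n^{D-1}}\right).
\]
In the irregular range $1/\lambda_D < n \leq 1/\lambda_1$, where $\lambda_D(K/n) > 1$ and $K/n$ is elongated in certain directions, I would instead use the coarser bound $\#((K/n) \cap \Lambda) \ll_D \prod_{i : n\lambda_i \leq 1}(n\lambda_i)^{-1}$, obtained by projecting onto the span of a Minkowski sub-basis of short vectors.

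Summing the main terms gives $\theta(K,\Lambda) \sum_{n \geq 1} \mu(n)/n^D = \zeta(D)^{-1}\theta(K,\Lambda)$, with a tail $\ll \theta(K,\Lambda)\lambda_1^{D-1} \leq \theta(K,\Lambda)\lambda_D$. The regular-range errors sum to $\ll_D \theta(K,\Lambda)\lambda_D(1 - \log \lambda_D)$. The irregular-range errors, organized dyadically according to $j(n) := \max\{j : n\lambda_j \leq 1\}$ and estimated via Minkowski's second theorem $\theta(K,\Lambda) \asymp_D (\lambda_1 \cdots \lambda_D)^{-1}$, contribute $\ll_D \theta(K,\Lambda)\lambda_D \log(\lambda_D/\lambda_1)$. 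Finally, the Mertens correction $\bigl|\sum_{n \leq N_0}\mu(n)\bigr| \leq N_0 \leq 1/\lambda_1 \ll_D \theta(K,\Lambda)\lambda_2 \cdots \lambda_D \leq \theta(K,\Lambda)\lambda_D$. Collecting these pieces yields the stated bound $|\eta(K,\Lambda)| \ll_D \lambda_D - \lambda_D\log \lambda_1$.

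The main obstacle is the irregular range $1/\lambda_D < n \leq 1/\lambda_1$: the sharp counting estimate is unavailable there because $K/n$ is long and thin, and one must argue via a Minkowski sub-basis and project onto the span of short vectors. The logarithmic factor $-\log \lambda_1$ in the final bound arises precisely from the dyadic summation of these coarser estimates across the irregular range. A secondary technical point is the justification of the quantitative main estimate $\#(K' \cap \Lambda) = \theta(K',\Lambda)(1 + O_D(\lambda_D))$, which demands care in the choice of Minkowski basis and the corresponding fundamental parallelepiped tiling.
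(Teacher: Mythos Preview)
Your argument is sound: M\"obius inversion reduces the primitive count to counts of $K/n \cap \Lambda$, the sharp estimate $\#(K'\cap\Lambda) = \theta(K',\Lambda)(1+O_D(\lambda_D(K',\Lambda)))$ handles the regular range $n\leq 1/\lambda_D$, and the coarser sub-basis bound covers the irregular range $1/\lambda_D < n \leq 1/\lambda_1$, with the $j=1$ block producing the logarithmic loss $-\lambda_D\log\lambda_1$. The bookkeeping you outline (tail of the Dirichlet series, Mertens-type correction from the constant terms, dyadic organization by $j(n)$) is correct and combines to $\lambda_D - \lambda_D\log\lambda_1$ as claimed.

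However, you should be aware that the paper does \emph{not} prove this lemma at all: Section~3.1 explicitly says ``We summarize the lemmas in \cite[\S3.1]{LSST}'' and then lists Lemmas~\ref{CLPCSLem1}--\ref{CLPCSLem6} without proof, treating them as black boxes imported from Liao--Shi--Solan--Tamam. So there is no ``paper's own proof'' to compare against. Your sketch is essentially the standard route (and presumably close to what \cite{LSST} does), so if you intend to include a self-contained proof, what you have written is a correct plan; if you are following the paper, a citation suffices.
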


\begin{lem}\label{CLPCSLem2}
Let $D \geq 2$. For every lattice $\Lambda$ in $\mathbb{R}^D$ and every bounded centrally symmetric convex subset $K$ of $\mathbb{R}^D$ with $\lambda_D(K,\Lambda) \leq 1$ we have
\eq{
\# (K \cap (\Lambda \smallsetminus \{0\})) = (1+\alpha(K,\Lambda)) \cdot \theta(K,\Lambda)
}
where $|\alpha(K,\Lambda)| \ll_D \lambda_D(K,\Lambda)$
\end{lem}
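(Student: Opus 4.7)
The plan is to reduce counting arbitrary nonzero lattice points in $K$ to counting \emph{primitive} ones, for which Lemma~\ref{CLPCSLem1} is available. Every $\mathbf{v}\in\Lambda\smallsetminus\{0\}$ factors uniquely as $\mathbf{v}=n\mathbf{u}$ with $n\in\bN$ and $\mathbf{u}\in\widehat{\Lambda}$, yielding the disjoint decomposition
\[
\#\bigl(K\cap(\Lambda\smallsetminus\{0\})\bigr)=\sum_{n=1}^{\infty}\#\bigl((K/n)\cap\widehat{\Lambda}\bigr),
\]
with the sum effectively truncated at $n\leq 1/\lambda_1(K,\Lambda)$, since $K/n$ contains no nonzero lattice vector for larger $n$.

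Set $M:=\lfloor 1/\lambda_D(K,\Lambda)\rfloor$. For each $n\leq M$ the dilate $K/n$ satisfies $\lambda_D(K/n,\Lambda)=n\lambda_D\leq 1$, so Lemma~\ref{CLPCSLem1} applies and gives
\[
\#\bigl((K/n)\cap\widehat{\Lambda}\bigr)=\frac{\theta(K,\Lambda)}{\zeta(D)\,n^D}+\eta_n\,\frac{\theta(K,\Lambda)}{n^D},\qquad |\eta_n|\ll_D n\lambda_D\bigl(1-\log(n\lambda_1)\bigr).
\]
Summing the main terms and invoking $\sum_{n\geq 1}n^{-D}=\zeta(D)$ produces the desired main term $\theta(K,\Lambda)$, with truncation tail bounded by $\theta(K,\Lambda)\sum_{n>M}n^{-D}\ll_D\theta(K,\Lambda)M^{-(D-1)}\leq\theta(K,\Lambda)\lambda_D^{D-1}\leq\theta(K,\Lambda)\lambda_D$, the last inequality relying on $D\geq 2$ and $\lambda_D\leq 1$.

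For $n>M$ the dilate $K/n$ satisfies $\lambda_D(K/n,\Lambda)>1$, so all of its lattice points lie in a proper rational subspace of $\bR^D$. A Minkowski-type volume estimate (using $\theta(K,\Lambda)\asymp_D\prod_i\lambda_i^{-1}$ from Minkowski's second theorem) then gives $\#\bigl((K/n)\cap\widehat{\Lambda}\bigr)\ll_D\theta(K,\Lambda)/n^D$, so this tail also contributes at most $\ll_D\theta(K,\Lambda)\lambda_D$.

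The delicate step, which I expect to be the main obstacle, is controlling the remaining error sum $\sum_{n\leq M}\eta_n\theta(K,\Lambda)/n^D$. Inserting the pointwise bound on $|\eta_n|$ naively introduces extra factors $|\log\lambda_1|$ from $\sum_{n\leq M}n^{-(D-1)}$ and $\log M$ from $\sum_{n\leq M}(\log n)/n^{D-1}$, which, for $D=2$, can compound to a spurious $\lambda_D\log^2(1/\lambda_1)$ rather than the target $\lambda_D$. To absorb these logarithms I plan either to retain the \emph{signed} structure of $\eta_n$ from Lemma~\ref{CLPCSLem1} and apply partial summation to exploit cancellations between consecutive $n$, or to bypass the primitive-count reduction entirely and argue directly via a Minkowski-reduced basis $\mathbf{v}_1,\dots,\mathbf{v}_D$ of $\Lambda$ adapted to $K$: in the coordinates induced by this basis, $K$ becomes, up to a bounded linear distortion depending only on $D$, an axis-aligned box whose boundary-layer lattice count is transparent and immediately yields the sharp remainder $O_D\bigl(\theta(K,\Lambda)\lambda_D\bigr)$.
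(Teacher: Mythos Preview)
The paper does not actually prove Lemma~\ref{CLPCSLem2}: it is listed among the lemmas ``summarize[d] \dots\ in \cite[\S3.1]{LSST}'' and is quoted from that reference without argument. So there is no proof in the paper to compare against; your proposal is an attempt to supply one.

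Regarding your proposal itself: the primitive-count reduction is correct as a decomposition, but the difficulty you flag is genuine and is \emph{not} resolved by your first suggested fix. The error term $\eta(K,\Lambda)$ in Lemma~\ref{CLPCSLem1} is only given as a bound $|\eta|\ll_D \lambda_D(1-\log\lambda_1)$; there is no sign structure or asymptotic expansion to exploit, so partial summation over $n$ has nothing to cancel. For $D=2$ the naive bound really does produce $\lambda_2\log^2(1/\lambda_1)$, and even for $D\ge 3$ a factor $|\log\lambda_1|$ survives. This route therefore does not reach the stated conclusion $|\alpha|\ll_D\lambda_D$ without additional input.

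Your second route, via a Minkowski-reduced basis, is the standard and correct one (and is how the result is proved in the literature underlying \cite{LSST}). Concretely: choose a basis $\mathbf{v}_1,\dots,\mathbf{v}_D$ of $\Lambda$ with $\|\mathbf{v}_i\|_K\asymp_D\lambda_i$; the fundamental parallelepiped $P=\{\sum_i t_i\mathbf{v}_i:0\le t_i<1\}$ then satisfies $P\subset c_D\lambda_D\,K$ for some $c_D$, since each $\lambda_i\le\lambda_D$. Tiling gives $(1-c_D\lambda_D)^D\,\theta\le\#(K\cap\Lambda)\le(1+c_D\lambda_D)^D\,\theta$, which is exactly $\#(K\cap\Lambda)=(1+O_D(\lambda_D))\,\theta$. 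Subtracting the single point $0$ changes nothing since $\theta\gg_D 1$ by Minkowski's second theorem. I would recommend dropping the primitive-count detour entirely and presenting only this direct argument.
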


\begin{lem}\label{CLPCSLem3}
Let $K$ and $\Lambda$ be as in Lemma \ref{CLPCSLem2}. Then
\eq{
\# (K \cap \Lambda) \asymp_D \theta(K,\Lambda).
}
\end{lem}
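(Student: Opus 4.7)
The plan is to sandwich $\#(K \cap \Lambda)$ between two $D$-dependent constant multiples of $\theta(K,\Lambda)$ by combining Minkowski's second theorem with the already-established Lemma \ref{CLPCSLem2}. Recall Minkowski's second theorem: under the hypotheses,
\[
\frac{2^D}{D!} \le \theta(K,\Lambda) \prod_{i=1}^D \lambda_i(K,\Lambda) \le 2^D.
\]
Since $\lambda_i(K,\Lambda) \le \lambda_D(K,\Lambda) \le 1$ for every $i$, the product on the left is at most $1$, so $\theta(K,\Lambda) \ge 2^D/D! \gg_D 1$. The upper bound is then immediate from Lemma \ref{CLPCSLem2}, which gives $\#(K \cap (\Lambda \smallsetminus \{0\})) \le (1 + C_D)\theta(K,\Lambda)$ (absorbing $\lambda_D \le 1$ into the constant $C_D$); adding the single point $0 \in K$ and using $\theta(K,\Lambda) \gg_D 1$ yields $\#(K \cap \Lambda) \ll_D \theta(K,\Lambda)$.

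For the lower bound Lemma \ref{CLPCSLem2} is insufficient, because when $\lambda_D(K,\Lambda)$ is close to $1$ the error term $\alpha$ could a priori drive the main term to $0$. Instead, I would argue directly from the successive minima: pick linearly independent $\mb{v}_1, \dots, \mb{v}_D \in \Lambda$ with $\mb{v}_i \in \lambda_i(K,\Lambda) K$, so that $\mb{v}_i/\lambda_i(K,\Lambda) \in K$. By central symmetry and convexity of $K$, any integer tuple $(n_1, \dots, n_D)$ with $\sum_i |n_i| \lambda_i(K,\Lambda) \le 1$ produces a vector
\[
\sum_{i=1}^D n_i \mb{v}_i = \sum_{i=1}^D \bigl(n_i \lambda_i(K,\Lambda)\bigr)\frac{\mb{v}_i}{\lambda_i(K,\Lambda)}
\]
that is a convex combination of points $\pm \mb{v}_i/\lambda_i(K,\Lambda) \in K$, hence lies in $K \cap \Lambda$. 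Restricting to $|n_i| \le \lfloor 1/(D\lambda_i(K,\Lambda)) \rfloor$ fulfils the constraint (since $\sum_i 1/D = 1$), and by linear independence of the $\mb{v}_i$ exhibits at least $\prod_{i=1}^D \bigl(2\lfloor 1/(D\lambda_i(K,\Lambda)) \rfloor + 1\bigr)$ distinct lattice points in $K \cap \Lambda$. A short case split between $\lambda_i(K,\Lambda) \le 1/(2D)$ (where $\lfloor 1/(D\lambda_i) \rfloor \ge 1/(2D\lambda_i)$) and $\lambda_i(K,\Lambda) > 1/(2D)$ (where $1/\lambda_i(K,\Lambda) = O_D(1)$) shows $2\lfloor 1/(D\lambda_i(K,\Lambda)) \rfloor + 1 \gg_D 1/\lambda_i(K,\Lambda)$ uniformly, so the product is $\gg_D \prod_i 1/\lambda_i(K,\Lambda)$, which by Minkowski's second theorem is $\gg_D \theta(K,\Lambda)$.

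The main (and essentially only) obstacle is this lower bound: one cannot simply quote Lemma \ref{CLPCSLem2}, since its error term is allowed to swallow the main term when $\lambda_D(K,\Lambda)$ is not small. The workaround is to bypass the asymptotic framework of Lemmas \ref{CLPCSLem1}--\ref{CLPCSLem2} and exhibit lattice points in $K$ explicitly via integer combinations of the successive-minima vectors. Everything else is a direct application of Minkowski together with elementary counting.
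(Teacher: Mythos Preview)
Your proof is correct. Note, however, that the paper does not actually prove this lemma: it is one of several results quoted without proof from \cite[\S3.1]{LSST} (see the sentence ``We summarize the lemmas in \cite[\S3.1]{LSST}'' preceding the block of Lemmas~\ref{CLPCSLem1}--\ref{CLPCSLem6}), so there is no in-paper argument to compare against. Your approach --- Minkowski's second theorem combined with Lemma~\ref{CLPCSLem2} for the upper bound, and an explicit count of integer combinations of successive-minima vectors for the lower bound --- is the standard one, and your remark that Lemma~\ref{CLPCSLem2} alone cannot deliver the lower bound when $\lambda_D(K,\Lambda)$ is near $1$ is well taken. One minor technicality: the successive minima are defined as infima, so $\mb{v}_i \in \lambda_i(K,\Lambda)K$ need not hold literally unless $K$ is closed; this is harmless here (replace $\lambda_i$ by $\lambda_i+\eps$ and let $\eps\to 0$, or observe that every $K$ used later in the paper is closed).
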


\begin{lem}\label{CLPCSLem4}
Let $D \geq 1$. Let $\Lambda$ be a lattice in $\mathbb{R}^D$ and $K$ be a bounded centrally symmetric convex subset of $\mathbb{R}^D$ with nonempty interior. Then
\eq{
\# (K^\circ \cap \Lambda) \asymp_D \# (K \cap \Lambda) \asymp_D \# (\overline{K} \cap \Lambda).
}
\end{lem}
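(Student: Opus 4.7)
The trivial inclusions $K^\circ \subset K \subset \overline{K}$ immediately give $\#(K^\circ \cap \Lambda) \leq \#(K \cap \Lambda) \leq \#(\overline{K} \cap \Lambda)$, so the content of the lemma lies entirely in the reverse comparisons. First I would reduce these to a single dilation estimate via convexity. Since $K$ is centrally symmetric with nonempty interior, $0 \in K^\circ$, and a standard convex-geometric fact yields $\lambda \overline{K} \subset K^\circ$ for every $\lambda \in (0,1)$. In particular $\tfrac{1}{2}K \subset K^\circ$ and $\overline{K} \subset 2K^\circ \subset 2K$. It therefore suffices to establish the dilation estimate
\begin{equation*}
\#(2K \cap \Lambda) \leq C(D)\,\#(K \cap \Lambda)
\end{equation*}
for some constant $C(D)$ depending only on $D$: applying the same estimate with $\tfrac{1}{2}K$ in place of $K$ then gives $\#(K \cap \Lambda) \leq C(D)\,\#(\tfrac{1}{2}K \cap \Lambda) \leq C(D)\,\#(K^\circ \cap \Lambda)$, which closes the chain together with the $\overline{K} \subset 2K$ inclusion.

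Next, for the dilation estimate itself, the plan is to run a packing-covering argument directly in the $K$-norm. I would choose a maximal finite set $\{x_1,\ldots,x_N\} \subset 2K$ with $\|x_i - x_j\|_K \geq 1/2$ for all $i \neq j$. Maximality ensures that the translates $\{x_i + \tfrac{1}{2}K\}_{i=1}^N$ cover $2K$, while the sets $x_i + \tfrac{1}{4}K$ are pairwise disjoint and all contained in $\tfrac{9}{4}K$; comparing Lebesgue measures gives $N \leq 9^D$. For each index $i$ such that $x_i + \tfrac{1}{2}K$ contains some lattice point $y_i$, translation by $-y_i$ identifies the lattice points of $x_i + \tfrac{1}{2}K$ with those of $(x_i - y_i) + \tfrac{1}{2}K$. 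Central symmetry forces $x_i - y_i \in \tfrac{1}{2}K$, and convexity gives $\tfrac{1}{2}K + \tfrac{1}{2}K \subset K$, so the number of such points is at most $\#(K \cap \Lambda)$. Summing over the (at most $9^D$) indices $i$ yields $\#(2K \cap \Lambda) \leq 9^D\,\#(K \cap \Lambda)$.

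The main subtlety is expected to lie in the translation step of the covering argument, which relies essentially on both defining properties of $K$: central symmetry (via $-\tfrac{1}{2}K = \tfrac{1}{2}K$, allowing one to replace an arbitrary translate of $\tfrac{1}{2}K$ by one centered at a lattice point) and convexity (via the Minkowski sum identity $\tfrac{1}{2}K + \tfrac{1}{2}K \subset K$). A convenient feature of this approach is that it avoids any case analysis based on the successive minima $\lambda_i(K,\Lambda)$ and applies uniformly to all $K$ and $\Lambda$, so Lemmas \ref{CLPCSLem1}--\ref{CLPCSLem3} are not directly invoked here.
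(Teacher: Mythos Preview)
Your argument is correct. The reduction via $\lambda\overline{K}\subset K^\circ$ for $\lambda\in(0,1)$ is standard once one notes that $0\in K^\circ$, and the packing--covering estimate in the $K$-norm is carried out cleanly; the translation step indeed uses both central symmetry ($-\tfrac12K=\tfrac12K$) and convexity ($\tfrac12K+\tfrac12K\subset K$) exactly as you say. A minor technical point: when arguing disjointness of the $x_i+\tfrac14K$, the inequalities are only non-strict, so one should either use the open copies $x_i+\tfrac14K^\circ$ or note that these have the same Lebesgue measure --- this does not affect the conclusion $N\le 9^D$.

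As for comparison: the paper does not give its own proof of this lemma. It is listed among the preliminary results quoted from \cite[\S3.1]{LSST} and invoked as a black box later (in the proof of Lemma~\ref{CLPCSLem8}). Your self-contained packing argument is therefore more than what the paper supplies; it is elementary, uniform in $K$ and $\Lambda$, and avoids any appeal to successive minima or to Lemmas~\ref{CLPCSLem1}--\ref{CLPCSLem3}, which is a nice feature.
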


\begin{lem}\label{CLPCSLem5}
Let $K$ and $\Lambda$ be as in Lemma \ref{CLPCSLem2}. If $\lambda_i(K,\Lambda) \leq s \leq s' \leq \lambda_{j+1} (K, \Lambda)$ where $ 1 \leq i \leq j \leq D$, then
\eq{
\left( \frac{s'}{s} \right)^i 
\ll_D \frac{ \# (s'K \cap \Lambda) }{ \# (sK \cap \Lambda) } 
\ll_D \left( \frac{s'}{s} \right)^j.
}
\end{lem}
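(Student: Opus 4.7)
The plan is to reduce Lemma \ref{CLPCSLem5} to the sharper counting estimate
\begin{equation*}
\#(tK \cap \Lambda) \asymp_D \prod_{k=1}^{D} \max\!\left(1, \frac{t}{\mu_k}\right), \qquad \mu_k := \lambda_k(K,\Lambda),
\end{equation*}
valid for every $t > 0$. Granted this formula, the lemma follows by bookkeeping on the ratio. The hypothesis $\mu_i \leq s \leq s' \leq \mu_{j+1}$ forces $\mu_k \leq s \leq s'$ for each $k \leq i$ and $\mu_k \geq s' \geq s$ for each $k \geq j+1$, so the $k$-th factor of the quotient $\#(s'K\cap\Lambda)/\#(sK\cap\Lambda)$ equals $s'/s$ when $k \leq i$, equals $1$ when $k \geq j+1$, and lies in $[1, s'/s]$ when $i < k \leq j$. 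Multiplying the $D$ factors sandwiches the full ratio between $(s'/s)^i$ and $(s'/s)^j$, with implicit constants depending only on $D$.

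For the counting estimate itself, I would fix linearly independent lattice vectors $v_1, \dots, v_D \in \Lambda$ realizing the successive minima, so that $v_k \in \mu_k K$, and, for a given $t > 0$, set $m = \max\{k : \mu_k \leq t\}$ (with $m = 0$ if $t < \mu_1$). When $m = 0$, both sides of the formula equal $1$ since $tK \cap \Lambda = \{0\}$. When $m \geq 1$, the condition $t < \mu_{m+1}$ prevents $m+1$ lattice points in $tK$ from being linearly independent, so $tK \cap \Lambda$ lies in some subspace of dimension at most $m$; but this subspace must contain the vectors $v_1, \dots, v_m \in \mu_m K \subseteq tK$, so it coincides with $V_m := \Span(v_1, \dots, v_m)$. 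Writing $K_m = K \cap V_m$ and $\Lambda_m = \Lambda \cap V_m$, the successive minima of $(K_m, \Lambda_m)$ inside $V_m$ are precisely $\mu_1, \dots, \mu_m$, and in particular $\lambda_m(tK_m, \Lambda_m) = \mu_m/t \leq 1$. Applying Lemma \ref{CLPCSLem3} inside $V_m$ and then Minkowski's second theorem yields
\begin{equation*}
\#(tK \cap \Lambda) = \#(tK_m \cap \Lambda_m) \asymp_D \theta(tK_m, \Lambda_m) = \frac{t^m \vol_{V_m}(K_m)}{\cov_{V_m}(\Lambda_m)} \asymp_D \prod_{k=1}^{m} \frac{t}{\mu_k},
\end{equation*}
which matches the desired formula since $\max(1, t/\mu_k) = t/\mu_k$ for $k \leq m$ and equals $1$ for $k > m$.

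No step here is individually delicate; the main conceptual point is to isolate the clean product formula $\prod_k \max(1, t/\mu_k)$ that behaves uniformly across the regimes $t < \mu_1,\ \mu_1 \leq t < \mu_2,\ \dots,\ t \geq \mu_D$. The only potential obstacle is tracking the dimension-dependent constants across the finitely many cases $m \in \{0, 1, \dots, D\}$, but since $m$ is bounded by $D$, both the constant from Lemma \ref{CLPCSLem3} in the slice and the constant from Minkowski's second theorem in the slice collapse into a single $O_D(1)$, so the reduction is clean.
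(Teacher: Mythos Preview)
The paper does not prove this lemma; it is one of several results quoted without proof from \cite[\S3.1]{LSST}, so there is no in-paper argument to compare against. Your proof is correct and self-contained: the product formula $\#(tK\cap\Lambda)\asymp_D\prod_k\max(1,t/\mu_k)$ is a standard and efficient way to package the count, and your derivation of it---restricting to the rational slice $V_m=\Span(v_1,\dots,v_m)$, checking that $tK\cap\Lambda\subseteq V_m$ and that the successive minima of $(K\cap V_m,\Lambda\cap V_m)$ agree with $\mu_1,\dots,\mu_m$, then invoking Lemma~\ref{CLPCSLem3} and Minkowski's second theorem in $V_m$---is sound. The bookkeeping in step~2 is also correct: for $i<k\le j$ the factor $\max(1,s'/\mu_k)/\max(1,s/\mu_k)$ indeed lies in $[1,s'/s]$ regardless of where $\mu_k$ falls relative to $s,s'$. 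As a minor remark, your argument never uses the standing hypothesis $\lambda_D(K,\Lambda)\le 1$ inherited from Lemma~\ref{CLPCSLem2}, so you have in fact proved a slightly more general statement.
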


\begin{lem}\label{CLPCSLem6}
Let $D \geq 2$. Let $K$ be a bounded centrally symmetric convex subset of $\mathbb{R}^D$ with nonempty interior and let $\varphi \in \cE_D^* \smallsetminus \{0\}$. Then
\eq{
\mathrm{vol}_{H_\varphi} (K) \asymp_D \| \varphi \| \mathrm{vol}(K) / \| \varphi \|_K.
}
\end{lem}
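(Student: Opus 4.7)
The plan is as follows. First I would identify $H_\varphi$ with the hyperplane $\ker\varphi=\{v\in\bR^D:\varphi(v)=0\}$, which is the only reading consistent with the claimed dependence on $\|\varphi\|$ and $\|\varphi\|_K$. For each $c\in\bR$ set $K_c:=K\cap\varphi^{-1}(c)$; by definition of the $K$-norm, $K_c=\varnothing$ for $|c|>\|\varphi\|_K$. The central observation is Cavalieri in the $\varphi$-direction: if $n\in\bR^D$ is the unit vector with $\varphi(v)=\|\varphi\|\,(v\cdot n)$, then in the coordinates $(t,y)$ with $t=v\cdot n$ and $y\in H_\varphi$ one has $c=\|\varphi\|\,t$, so Fubini gives
\[
\vol(K)=\frac{1}{\|\varphi\|}\int_{-\|\varphi\|_K}^{\|\varphi\|_K}\vol_{D-1}(K_c)\,dc.
\]

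To relate this integral to the central slice $\vol_{H_\varphi}(K)=\vol_{D-1}(K_0)$, I would invoke Brunn's theorem: the function $f(c):=\vol_{D-1}(K_c)^{1/(D-1)}$ is concave on its support, and it is even because $K=-K$. Evenness combined with concavity forces $f$ to attain its maximum at $c=0$ and to satisfy $f(c)\geq (1-|c|/\|\varphi\|_K)\,f(0)$ on $(-\|\varphi\|_K,\|\varphi\|_K)$. Raising to the power $D-1$ yields the two-sided slice comparison
\[
(1-|c|/\|\varphi\|_K)^{D-1}\vol_{D-1}(K_0)\leq\vol_{D-1}(K_c)\leq\vol_{D-1}(K_0).
\]
Substituting these bounds into the Cavalieri identity and computing the elementary integrals produces
\[
\frac{2\|\varphi\|_K}{D\,\|\varphi\|}\vol_{D-1}(K_0)\leq\vol(K)\leq\frac{2\|\varphi\|_K}{\|\varphi\|}\vol_{D-1}(K_0),
\]
which rearranges directly to $\vol_{H_\varphi}(K)\asymp_D \|\varphi\|\,\vol(K)/\|\varphi\|_K$ with explicit constants $1/2$ and $D/2$.

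There is essentially no obstacle here: the only substantive ingredient is Brunn's concavity theorem, a classical consequence of Brunn--Minkowski, and everything else is a one-line slicing computation. If one prefers to avoid citing Brunn, the upper bound on the central slice can be replaced by the trivial estimate $\vol_{D-1}(K_c)\leq\vol_{D-1}(K_0)$ obtained as follows: every $v\in K_c$ can be written as $v=v_0+tn$ with $v_0\in H_\varphi$, and by central symmetry the segment between $v$ and $-v$ passes through $v_0\in K_0$, giving an injection $K_c\hookrightarrow K_0$ by orthogonal projection. The matching lower bound on $\vol_{D-1}(K_c)$ for the other inequality is similarly elementary via convex combinations with a vector in $K_{\pm\|\varphi\|_K}$. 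Either route delivers the $\asymp_D$ statement without further work.
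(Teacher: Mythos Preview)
The paper does not supply its own proof of this lemma: it is listed among the preliminaries quoted without argument from \cite[\S3.1]{LSST}. Your main line --- Cavalieri in the $\varphi$-direction together with Brunn's concavity theorem applied to the even function $c\mapsto\vol_{D-1}(K_c)^{1/(D-1)}$ --- is correct and delivers the statement with the explicit constants $1/2$ and $D/2$.

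Your parenthetical ``alternative route'', however, contains an error. You claim that ``by central symmetry the segment between $v$ and $-v$ passes through $v_0\in K_0$'', where $v_0$ is the orthogonal projection of $v\in K_c$ onto $H_\varphi$. This is false: that segment meets $H_\varphi$ at the origin, not at $v_0$. In fact orthogonal projection need not carry $K_c$ into $K_0$ at all --- take $K\subset\bR^2$ a long thin ellipse with major axis at $45^\circ$ to the coordinate axes and $\varphi=\be_2^*$; the slice $K_c$ is a horizontal segment centred near $x\approx c$, and its projection to the $x$-axis is not contained in $K_0$. The inequality $\vol_{D-1}(K_c)\le\vol_{D-1}(K_0)$ is true, but it already needs the even--concave argument, i.e.\ Brunn; there is no shortcut of the kind you describe. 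Since your principal proof does not rely on this remark, the argument stands; simply delete or repair the alternative paragraph.
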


We need the following auxiliary lemma.
\begin{lem}\label{LBLem3}
Given $D\geq 2$ and $r>0$, let $\Lambda \in \cK_r^*(D)$, and let $\bv, \bw \in \Lambda$ be any nonzero linearly independent vectors. Then there exists a positive constant $c'=c'(D)>0$ such that $\| \bv \wedge \bw \| \geq c' r^{D-2}$.
\end{lem}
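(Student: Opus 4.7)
The plan is to reduce $\|\bv\wedge\bw\|$ to a covolume, then dualize so that the assumption $\Lambda\in\cK_r^*(D)$ can be invoked.

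First, let $P=\Span(\bv,\bw)$, which is a $2$-dimensional $\Lambda$-rational subspace of $\bR^D$, and put $\Lambda_P=\Lambda\cap P$. Since $\bv,\bw\in\Lambda_P$ are linearly independent, they generate a finite-index sublattice of $\Lambda_P$, so
\[
\|\bv\wedge\bw\| \;=\; [\Lambda_P:\langle\bv,\bw\rangle]\cdot\cov_P(\Lambda_P)\;\geq\;\cov_P(\Lambda_P).
\]
Thus it suffices to show $\cov_P(\Lambda_P)\gg_D r^{D-2}$.

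Next, I would relate $\cov_P(\Lambda_P)$ to data coming from $\Lambda^*$. Let $\pi\colon\bR^D\to P^\perp$ be orthogonal projection and set $\bar\Lambda=\pi(\Lambda)$, a rank $D{-}2$ lattice in $P^\perp$. The standard splitting yields
\[
\cov_P(\Lambda_P)\cdot\cov_{P^\perp}(\bar\Lambda)\;=\;\cov(\Lambda)\;=\;1.
\]
Using the Euclidean identification $(\bR^D)^*\cong\bR^D$, a direct unwinding shows that $\bar\Lambda^*=\Lambda^*\cap P^\perp$ as lattices in $P^\perp$: indeed, $u\in P^\perp$ defines a functional on $\bar\Lambda$ with integral values iff the extension $x\mapsto\langle u,x\rangle$ on $\bR^D$ lies in $\Lambda^*$. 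Combining with $\cov_{P^\perp}(\bar\Lambda)\cdot\cov_{P^\perp}(\bar\Lambda^*)=1$ gives
\[
\cov_P(\Lambda_P)\;=\;\cov_{P^\perp}\!\bigl(\Lambda^*\cap P^\perp\bigr).
\]

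Finally, the assumption $\Lambda\in\cK_r^*(D)$ means every nonzero element of $\Lambda^*$ has Euclidean norm at least $r$, so a fortiori every nonzero element of the $(D-2)$-dimensional sublattice $\Lambda^*\cap P^\perp$ has norm $\geq r$. Applying Minkowski's first convex body theorem to the open ball of radius $r$ inside $P^\perp$, which contains no nonzero point of $\Lambda^*\cap P^\perp$, yields
\[
r^{D-2}\,\vol_{D-2}(B_1)\;\leq\;2^{D-2}\,\cov_{P^\perp}\!\bigl(\Lambda^*\cap P^\perp\bigr),
\]
so $\cov_P(\Lambda_P)\geq c'(D)\,r^{D-2}$ with $c'(D)=\vol_{D-2}(B_1)/2^{D-2}$, and the lemma follows.

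The only delicate point I foresee is the identification $\bar\Lambda^*=\Lambda^*\cap P^\perp$ together with the covolume splitting $\cov_P(\Lambda_P)\cdot\cov_{P^\perp}(\bar\Lambda)=1$; these are standard but require being careful about extending functionals by zero on $P$ and about the fact that $\Lambda$-rationality of $P$ forces $\Lambda^*$-rationality of $P^\perp$. Once those identifications are in place, the Minkowski step is immediate and gives the required dimensional constant.
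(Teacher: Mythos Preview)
Your argument is correct. The reduction $\|\bv\wedge\bw\|\geq\cov_P(\Lambda_P)$, the covolume splitting $\cov_P(\Lambda_P)\cdot\cov_{P^\perp}(\pi(\Lambda))=1$, the identification $\pi(\Lambda)^*=\Lambda^*\cap P^\perp$, and the final Minkowski bound in the $(D-2)$-dimensional space $P^\perp$ all go through as you describe; the edge case $D=2$ (where $P^\perp=\{0\}$ and the covolume convention gives $1$) also fits.

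The paper proceeds differently: it never passes to $P^\perp$ or to the dual lattice explicitly. Instead it bounds $\|\bv\wedge\bw\|\geq\cov(\Lambda')\gg\lambda_1(B_1,\Lambda')\lambda_2(B_1,\Lambda')\geq\lambda_1(B_1,\Lambda)\lambda_2(B_1,\Lambda)$ via Minkowski's second theorem, then uses Minkowski's second theorem again on the full lattice together with the transference inequality $\lambda_D(B_1,\Lambda)\ll_D 1/\lambda_1(B_1,\Lambda^*)\leq 1/r$ to conclude $\lambda_1\lambda_2\gg_D r^{D-2}$. Your route is more structural (dualizing the two-plane to its annihilator and applying Minkowski's \emph{first} theorem there) and yields an explicit constant $c'(D)=\vol_{D-2}(B_1)/2^{D-2}$; the paper's route is shorter but relies on the transference relation between successive minima of $\Lambda$ and $\Lambda^*$, with the constant left implicit in the $\ll_D$ notation. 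Either approach suffices for the applications later in the paper.
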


\begin{proof}
Let $\Lambda'$ be the 2-dimensional sublattice of $\Lambda$ generated by $\bv, \bw$. By Minkowski's second theorem, we have
\eqlabel{mst1}{
\| \bv \wedge \bw \| \geq \text{cov}(\Lambda') \gg_2 \lambda_1(B_1, \Lambda') \lambda_2(B_1,\Lambda') 
\geq \lambda_1(B_1,\Lambda) \lambda_2(B_1,\Lambda). 
}
Again by Minkowski's second theorem, we have
\eqlabel{mst2}{
\begin{split}
1 \ll_D \lambda_1(B_1,\Lambda) \cdots \lambda_{D}(B_1,\Lambda) &\leq \lambda_1(B_1,\Lambda) \lambda_2(B_1,\Lambda) \lambda_{D}(B_1,\Lambda)^{D-2} \\
&\leq \lambda_1(B_1,\Lambda) \lambda_2(B_1,\Lambda) \frac{1}{r^{D-2}}.
\end{split}
}
The last inequality comes from $\Lambda \in \cK_r^*(D)$. We obtain the result by combining (\ref{mst1}) and (\ref{mst2}).
\end{proof}

\subsection{Lattice point counting in $\mathbb{R}^{d+1}$}\label{subsec3.2}

For $d \geq 2$ and a $(d+1)$-tuple $\bbr = (r_1, \dots , r_{d+1})$ of positive real numbers, we estimate the number of lattice points in the set
\eq{
M_\bbr = \{ (x_1, \dots , x_{d+1}) \in \bR^{d+1} : |x_i| \leq r_i,\ \forall i=1,\dots,d+1 \}.
}
Let
\eq{
M_\bbr^* = \{ \varphi \in \cE_{d+1}^* : | x_i^\varphi | \leq r_i,\ \forall i = 1, \dots , d+1 \},
}
where the element $\varphi \in \cE_{d+1}^*$ is represented by $\varphi = \sum_{i=1}^{d+1} x_i^\varphi \be_i^*$.

\begin{lem}\label{CLPCSLem7}
Let $d \geq 2$.
For any real number $c_0 >1$, there exists a positive real number $\tilde{c} < 1$ such that for every lattice $\Lam$ in $\bR^{d+1}$ and every $(d+1)$-tuple $\bbr$ of positive real numbers with 
\eq{
\lam_{d+1}(M_\bbr,\Lam) \leq \tilde{c} \quad \text{and} \quad -\lam_{d+1}(M_\bbr,\Lam) \log \lam_1(M_\bbr,\Lam) \leq \tilde{c}
}
one has
\eq{
\frac{ 1 }{ c_0 \zeta(d+1) } \theta(M_\bbr,\Lam)
\leq \# (M_\bbr \cap \widehat{\Lam})
\leq \frac{ c_0 }{ \zeta(d+1) } \theta(M_\bbr,\Lam).
}
\end{lem}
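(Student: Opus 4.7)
The plan is to derive this lemma as a direct quantitative refinement of Lemma \ref{CLPCSLem1} applied to $K = M_\bbr$. Since $M_\bbr$ is a bounded centrally symmetric convex subset of $\bR^{d+1}$ and the hypothesis $\lam_{d+1}(M_\bbr, \Lam) \leq \tilde{c} < 1$ matches the assumption of Lemma \ref{CLPCSLem1} (with $D = d+1$), I would begin by writing
\[
\#(M_\bbr \cap \widehat{\Lam}) = \bigl( \zeta(d+1)^{-1} + \eta(M_\bbr, \Lam) \bigr)\, \theta(M_\bbr, \Lam),
\]
where the error term satisfies $|\eta(M_\bbr, \Lam)| \leq C_d \bigl( \lam_{d+1}(M_\bbr, \Lam) - \lam_{d+1}(M_\bbr, \Lam) \log \lam_1(M_\bbr, \Lam) \bigr)$ for some constant $C_d > 0$ depending only on $d$.

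The next step is to invoke both hypotheses: they bound each of the two summands in the error estimate by $\tilde{c}$, yielding $|\eta(M_\bbr, \Lam)| \leq 2 C_d \tilde{c}$. The desired two-sided inequality
\[
\frac{1}{c_0 \zeta(d+1)} \leq \zeta(d+1)^{-1} + \eta(M_\bbr, \Lam) \leq \frac{c_0}{\zeta(d+1)}
\]
(which, after multiplication by $\theta(M_\bbr, \Lam) > 0$, is the conclusion) is equivalent to $|\eta(M_\bbr, \Lam)| \leq (c_0 - 1)/(c_0 \zeta(d+1))$, where the lower-bound side is the more restrictive constraint since $c_0 > 1$. Choosing
\[
\tilde{c} = \min\!\left( \tfrac{1}{2},\; \frac{c_0 - 1}{2 C_d c_0 \zeta(d+1)} \right)
\]
then simultaneously ensures $\tilde{c} < 1$ and gives the required bound on $|\eta(M_\bbr, \Lam)|$.

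Since the argument is essentially a direct translation of the asymptotic error estimate in Lemma \ref{CLPCSLem1} into explicit quantitative form, there is no substantial obstacle to overcome; the only care needed is in tracking the dependence on $c_0$ and on the absolute constant $C_d$ inherited from Lemma \ref{CLPCSLem1} when fixing $\tilde{c}$, and in verifying that the lower-bound constraint is the binding one.
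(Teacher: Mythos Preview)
Your proposal is correct and matches the paper's approach exactly: the paper's proof consists of the single sentence ``The proof follows directly from Lemma~\ref{CLPCSLem1},'' and you have simply spelled out the routine details of that implication. One minor quibble: the two-sided inequality is not literally \emph{equivalent} to $|\eta| \leq (c_0-1)/(c_0\zeta(d+1))$ (the upper bound tolerates a larger positive $\eta$), but since you only need sufficiency this does not affect the argument.
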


\begin{proof}
The proof follows directly from Lemma \ref{CLPCSLem1}.
\end{proof}

Now we fix real numbers $s, r_1, \dots , r_{d+1}$ such that $0 < s < 1/2$, 
$r_i \geq 1$ for each $i=1,\dots,d$, and $r_{d+1}=1$. 
Denote by $\mb{r} = (r_1,\dots,r_{d+1})$, $r_{M}=\max_{1\leq i \leq d} r_i$, and $r_m = \min_{1\leq i \leq d} r_i$. Define a norm
\eq{
\| \varphi \|_\bbr = \max \left\{ r_i |x_i^\varphi| : i = 1,\dots,d+1 \right\}.                                                                                                                                                                                                                                                                                                                                                                                                                                                                                                                                                                                                                                                                                                                                                                                                                                                                                                                                                                                                                                                                                                                                                                                                                                                                                                                                                                                                                                                                                                                                                                                                                                                                                                                                                                                                                                                                                                                                                                                                                                                                                                                                                                                                                                                                                                                                                                                                                                                                                                                                                                                                                                                                                                                                                                                                                                                                                                                                                                                                                                                                                                                                                                                                                                                                                                                                                                                                                                                                                                                                                                                                                                                                                                                                                                                                                                                                                                                                                                                                                                                                                                                         
}
It follows from the definition that
\eqlabel{normbound}{
\| \varphi \|_\bbr 
\leq \| \varphi \|_{M_\bbr} 
\leq (d+1) \| \varphi \|_\bbr.
}

For $q>0$ let 
\eq{
N_q(\bbr,s) = \left\{ \varphi \in \cE_{d+1}^* : |x_i^\varphi| \leq s,\ \forall i = 1,\dots,d, \text{ and } \| \varphi \|_\bbr \leq q \right\}.
}
Note that $N_q (\bbr,s) = M^*_{\bbr'}$ where $\bbr' = (r_1',\dots,r_{d}',q)$ with $r_i' = \min \{ q/r_i, s \}$.
For a lattice $\Lam$ in $\bR^{d+1}$ and $i = 1,\dots,d+1$, let $q_i(\Lam,\bbr,s)$ be the infimum of those positive real numbers $q$ such that $N_q(\bbr,s) \cap \Lam^*$ contains $i$ linearly independent vectors.
We will give an upper bound of the number of
\eq{
\cS(\Lam,\bbr,s) := \left\{ \bv \in M_\bbr \cap \widehat{\Lam} : \varphi(\bv) = 0 \text{ for some } \varphi \in N_{(d+1)sr_M}(\bbr,s) \cap \widehat{\Lam^*} \right\},
}
where $\widehat{\Lam^*}$ is the set of primitive vectors in $\Lam^*$.

\begin{lem}\label{CLPCSLem8}
For $d \geq 2$, let $\Lam$ be a unimodular lattice in $\bR^{d+1}$ with $q_1(\Lam,\bbr,s) \geq s^{-2}$.
Then
\begin{enumerate}
\item\label{count_lem_1} if $r_m = r_M$ and $q_{d+1}(\Lam,\bbr,s) \leq d s^{-1/2} r_M$, then $$\# \cS(\Lam,\bbr,s) \ll s^{1/2} \cdot \mathrm{vol}(M_\bbr);$$
\item\label{count_lem_2} if $r_m < r_M$ and $q_{d+1}(\Lam,\bbr,s) \log q_{d+1}(\Lam,\bbr,s) \leq sr_M$, then $$\# \cS (\Lam,\bbr,s) \ll s^2 \cdot \vol(M_\bbr).$$
\end{enumerate}
\end{lem}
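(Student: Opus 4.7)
The plan is to apply a union bound over dual functionals and separately estimate the inner slice count and the weighted sum:
\[
\#\cS(\Lambda,\bbr,s)\;\le\;\sum_{\varphi\in\widehat{\Lambda}^{*}\cap N_{(d+1)sr_M}(\bbr,s)\setminus\{0\}}\#\bigl(M_{\bbr}\cap H_{\varphi}\cap\widehat{\Lambda}\bigr).
\]
For each primitive $\varphi\in\Lambda^{*}$, the lattice $\Lambda\cap H_{\varphi}$ has covolume $\|\varphi\|$ (Euclidean norm), so Lemma~\ref{CLPCSLem6} gives $\vol_{H_{\varphi}}(M_{\bbr})\asymp_d\|\varphi\|\,\vol(M_{\bbr})/\|\varphi\|_{M_{\bbr}}$. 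The hypothesis $q_1\ge s^{-2}$, combined with Minkowski's second theorem on $\Lambda^{*}$ and the transference estimate of Lemma~\ref{LBLem3}, would be used to verify that the $d$th successive minimum of $M_{\bbr}\cap H_{\varphi}$ with respect to $\Lambda\cap H_{\varphi}$ is at most~$1$ (with the residual $O(1)$ contributions treated separately when it fails), so that Lemma~\ref{CLPCSLem3} yields
\[
\#\bigl(M_{\bbr}\cap H_{\varphi}\cap\Lambda\bigr)\;\ll_d\;\frac{\vol(M_{\bbr})}{\|\varphi\|_{M_{\bbr}}}\;\asymp_d\;\frac{\vol(M_{\bbr})}{\|\varphi\|_{\bbr}}
\]
by the norm comparison~\eqref{normbound}.

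The remaining step is to estimate $\sum_{\varphi}\|\varphi\|_{\bbr}^{-1}$ by Abel summation against the counting function $T(q):=\#\bigl(\widehat{\Lambda}^{*}\cap N_q(\bbr,s)\bigr)$, giving
\[
\sum_{\varphi}\frac{1}{\|\varphi\|_{\bbr}}\;\ll\;\int_{s^{-2}}^{(d+1)sr_M}\frac{T(q)}{q^{2}}\,dq\;+\;\frac{T((d+1)sr_M)}{sr_M},
\]
where the lower limit reflects $q_1\ge s^{-2}$. I would control $T(q)$ piecewise via the successive minima $q_i=q_i(\Lambda,\bbr,s)$: for $q\ge q_{d+1}$, Lemma~\ref{CLPCSLem3} gives $T(q)\asymp_d\vol(N_q(\bbr,s))$, which is piecewise polynomial in $q$ since $N_q=M^{*}_{\bbr'}$ with $\bbr'_i=\min(q/r_i,s)$ changes shape at each threshold $q=sr_i$; for $s^{-2}\le q<q_{d+1}$, the lattice points of $N_q\cap\Lambda^{*}$ are confined to a proper subspace of $\cE_{d+1}^{*}$ and $T(q)$ is bounded by the appropriate product of $q/q_i$ factors using Lemma~\ref{CLPCSLem5} and the transference estimate of Lemma~\ref{LBLem3}.

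The case analysis then delivers the two exponents. In Case~\ref{count_lem_1}, $r_m=r_M=:r$ makes $N_q$ change shape only at the single threshold $q=sr$, with $\vol(N_q)\asymp s^{d}q$ in the upper regime, and the hypothesis $q_{d+1}\le ds^{-1/2}r$ controls the transition between the volume-based and subspace-confined estimates for $T$, yielding $\sum_{\varphi}\|\varphi\|_{\bbr}^{-1}\ll s^{1/2}$ and hence $\#\cS\ll s^{1/2}\vol(M_{\bbr})$. In Case~\ref{count_lem_2}, $r_m<r_M$ makes $\vol(N_q)$ strictly smaller at intermediate $q$, while the sharper hypothesis $q_{d+1}\log q_{d+1}\le sr_M$ both guarantees that volume counting is available throughout the upper regime $q\in[sr_M,(d+1)sr_M]$ and absorbs the logarithmic factor from the harmonic integral, giving $\sum_{\varphi}\|\varphi\|_{\bbr}^{-1}\ll s^{2}$ and $\#\cS\ll s^{2}\vol(M_{\bbr})$. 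The main obstacle will be the piecewise volume computation for $N_q(\bbr,s)$ across all the thresholds $\{sr_i\}$ together with the bookkeeping in the intermediate range $q\in[s^{-2},q_{d+1})$, where the naive volume-count for $T$ fails and subspace confinement must be exploited; the precise numerology that links the $q_1$- and $q_{d+1}$-hypotheses is exactly what produces the mismatched exponents $s^{1/2}$ versus $s^{2}$ in the two cases.
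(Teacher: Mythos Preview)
Your overall strategy matches the paper's: union bound over $\varphi$, slice estimate $\#(M_\bbr \cap H_\varphi \cap \widehat\Lambda) \ll \vol(M_\bbr)/\|\varphi\|_\bbr$, then Abel summation against $T(q) = \#(N_q \cap \widehat\Lambda^*)$, with $T$ controlled piecewise according to the $q_i$ and the thresholds $sr_i$. One step is overcomplicated and one has a genuine gap.

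For the slice estimate you invoke $q_1 \geq s^{-2}$ and Lemma~\ref{LBLem3}; neither is needed or used in the paper. The argument is simpler: if $\#(H_\varphi \cap M_\bbr \cap \widehat\Lambda) < d+1$ the bound is trivial because $\vol(M_\bbr)/\|\varphi\|_\bbr \gg 1$, while if the count is $\geq d+1$ then $H_\varphi \cap M_\bbr \cap \Lambda$ already contains $d$ independent vectors and Lemma~\ref{CLPCSLem3} applies directly to the $d$-dimensional slice. Lemma~\ref{LBLem3} plays no role anywhere in this lemma.

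The gap is in Case~(\ref{count_lem_2}) on the range $q \in (q_d, q_{d+1})$. Your plan invokes Lemma~\ref{CLPCSLem5}, but that lemma compares $\#(sK \cap \Lambda)$ for dilates $sK$ of a \emph{fixed} convex body $K$. When $r_m < r_M$ the boxes $N_q = M^*_{\bbr'}$ with $\bbr'_i = \min(q/r_i, s)$ are not dilates of one another (different coordinates saturate at $s$ at different values of $q$), so Lemma~\ref{CLPCSLem5} does not apply, even after restricting to the confining subspace. The paper's substitute is a direct geometric claim: setting $H = \Span_\bR(N_{q_d} \cap \Lambda^*)$, one has $\vol_H(N_q) \leq (q/q_{d+1})\,\vol_H(N_{q_{d+1}})$, proved by casing on whether $\be_{d+1}^* \in H$ and using that $q_{d+1}/r_M < s$ so that at least one projected coordinate still scales linearly in $q$. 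Combined with Lemma~\ref{CLPCSLem3} in $H$, this yields $T(q) \ll s^{d-1} q_{d+1} q / r_M$, whence $\int_{q_d}^{q_{d+1}} T(q)q^{-2}\,dq \ll s^2 \cdot q_{d+1}\log q_{d+1}/(sr_M) \leq s^2$; this is precisely where the hypothesis $q_{d+1}\log q_{d+1} \leq sr_M$ is spent. In Case~(\ref{count_lem_1}) your plan is fine: $r_m = r_M$ forces $N_q = (q/sr_M) N_{sr_M}$ for $q \leq sr_M$, a genuine dilation, and Lemma~\ref{CLPCSLem5} applies exactly as you say.
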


\begin{proof}
For simplicity, we denote by $N_q = N_q(\bbr,s)$, $q_i = q_i(\Lam,\bbr,s)$ and $\cS = \cS(\Lam,\bbr,s)$.
If $N_{(d+1)sr_M} \cap \widehat{\Lam^*}$ is empty then there is nothing to prove.
We assume that $N_{(d+1)sr_M} \cap \widehat{\Lam^*}$ is nonempty.
It follows from the definition that
\eqlabel{ubS1}{
\# \cS \leq \sum_{\varphi \in N_{(d+1)sr_M} \cap \widehat{\Lam^*}} \# ( H_\varphi \cap M_\bbr \cap \widehat{\Lam} ).
}

We first claim that for every $\varphi \in N_{(d+1)sr_M} \cap \widehat{\Lam^*}$,
\eqlabel{ubS2}{
\# (H_\varphi \cap M_\bbr \cap \widehat{\Lam})                                                                                                                                                                                                                                                                                                                                                                                                                                                                                                                                                                                                                                                                                                                                                                                                                                                                                                                                                                                                                                                                                                                                                                                                              
\ll \frac{ \mathrm{vol}(M_\bbr) }{ \|\varphi\|_{M_\bbr} } 
\leq \frac{ \vol(M_\bbr) }{ \|\varphi\|_\bbr },
}
where the second inequality follows from \eqref{normbound}.
In fact, if $\# (H_\varphi \cap M_\bbr \cap \widehat{\Lam}) < d+1$, then it follows from \eqref{normbound} that
\eq{
\frac{ \mathrm{vol}(M_\bbr) }{ \|\varphi\|_{M_\bbr} }\geq \frac{ 2^{(d+1)} r_1 \dots r_{d+1}}{ (d+1)\|\varphi\|_{\mb{r}} } \geq \frac{ 2^{(d+1)} r_1 \dots r_{d+1}}{ (d+1)^2 sr_M } \gg \# (H_\varphi \cap M_\bbr \cap \widehat{\Lam}).
}
Otherwise, $H_\varphi \cap M_\bbr \cap \Lam$ has $d$ linearly independent vectors, hence it follows from Lemma \ref{CLPCSLem3} and Lemma \ref{CLPCSLem6} that
\eq{
\# (H_\varphi \cap M_\bbr \cap \widehat{\Lam})
\ll \frac{ \vol_{H_\varphi}(M_\bbr) }{ \cov_{H_\varphi}(\Lam) }
\ll \frac{ \| \varphi \| \vol(M_\bbr) }{ \cov_{H_\varphi}(\Lam) \| \varphi \|_{M_\bbr} }
\ll \frac{ \vol(M_\bbr) }{ \| \varphi \|_{M_\bbr} },
}
which concludes the claim.

By \eqref{ubS1} and \eqref{ubS2}, to prove Lemma \ref{CLPCSLem8}, it suffices to estimate
\eqlabel{ubS3}{
\begin{split}
\eta :=& \sum_{\varphi \in N_{(d+1)sr_M} \cap \widehat{\Lam^*}} \| \varphi \|_\bbr^{-1}  \\
=& \frac{ 1 }{ (d+1)sr_M } \# (N_{(d+1)sr_M} \cap \widehat{\Lam^*} ) + \sum_{\varphi \in N_{(d+1)sr_M} \cap \widehat{\Lam^*}} \int_{\|\varphi\|_\bbr}^{(d+1)sr_M} \frac{ 1 }{ q^2 } \mathrm{d}q. 
\end{split}
}
We denote the first and second terms in the last line by $\eta_1,\eta_2$, respectively.

Observe that
\eqlabel{ubS8}{
\begin{split}
\eta_2 
&= \sum_{\varphi \in N_{(d+1)sr_M} \cap \widehat{\Lam^*}} \int_{q_1}^{(d+1)sr_M} \frac{ \mathbbm{1}_{q}(\|\varphi\|_\bbr) }{ q^2 } \mathrm{d}q \\
&= \int_{q_1}^{(d+1)sr_M} \sum_{\varphi \in N_{(d+1)sr_M} \cap \widehat{\Lam^*}} \frac{ \mathbbm{1}_{q}(\|\varphi\|_\bbr) }{ q^2 } \mathrm{d}q  \\
&\leq \int_{q_1}^{(d+1)sr_M} \frac{ \# (N_q \cap \widehat{\Lam^*}) }{ q^2 } \mathrm{d}q. 
\end{split}
}
where $\mathbbm{1}_q$ denotes the indicator function of the set $\{x \in \bR : x\leq q\}$.
%In the second equality we use Fubini's theorem.

On the other hand, for $i = 2,\dots,d$, if $q_{i-1} \leq q < q_i$ then we have $\# (N_q \cap \widehat{\Lam^*}) = i \leq d$.
Thus
\eqlabel{ubS5}{
\int_{q_1}^{q_{d}} \frac{ \# (N_q \cap \widehat{\Lam^*}) }{ q^2 } \mathrm{d}q 
\leq \int_{q_1}^{q_{d}} \frac{ d }{ q^2 } \mathrm{d}q
\leq \frac{ d }{ q_1 }
\ll s^2
\leq s^{1/2},
}
where the third inequality follows from the assumption $q_1 \geq s^{-2}$.

Now we are ready to prove the two assertions in Lemma \ref{CLPCSLem8} separately.
\begin{proof}[Proof of the assertion \eqref{count_lem_1}]
We claim that $\eta \ll s^{1/2}$ under the assumption of \eqref{count_lem_1}, which concludes the assertion \eqref{count_lem_1}. Assume that $r_m = r_M$ and $q_{d+1} \leq d s^{-1/2} r_M$.
Observe that by definition
\eqlabel{eqNM}{
N_{(d+1)s^{-1/2}r_M} = M_{(s,\dots,s,(d+1)s^{-1/2}r_M)}^*.
}
We have an upper bound of $\eta_1$ as 
\eqlabel{ubS4}{
\eta_1 
\leq \frac{ \# (N_{(d+1)s^{-1/2}r_M} \cap \Lam^*) }{ (d+1)sr_M } 
\ll \frac{ \vol(N_{(d+1)s^{-1/2}r_M}) }{ (d+1)sr_M }
\ll s^{d-3/2}
\leq s^{1/2}.
}
The first inequality follows from $s<1/2$, the second inequality follows from Lemma \ref{CLPCSLem3},
and the third inequality follows from \eqref{eqNM}.

For an upper bound of $\eta_2$, we first compute
\eqlabel{ubS6}{
\begin{split}
\int_{sr_M}^{(d+1)sr_M} \frac{ \# (N_q \cap \widehat{\Lam^*}) }{ q^2 } \mathrm{d}q
&\leq \int_{sr_M}^{(d+1)sr_M} \frac{ \# (N_{(d+1)sr_M} \cap \Lam^*) }{ q^2 } \mathrm{d}q \\
&\leq \frac{ \# (N_{(d+1)sr_M} \cap \Lam^*) }{ sr_M }  \\
&\ll s^{1/2}, 
\end{split}
}
where the last inequality can be shown in the same way as in (\ref{ubS4}).

If $sr_M \leq q_{d}$, then it follows from (\ref{ubS5}) and (\ref{ubS6}) that $\eta_2 \ll s^{1/2}$.
Now we suppose that $sr_M > q_{d}$.
For all $q_{d} < q \leq sr_M = sr_m$, observe that 
$$N_q = M_{(q/r_1,\dots,q/r_{d+1})}^* = \frac{ q }{ sr_M } N_{sr_M}.$$
%Using this identification, we have $N_q \cap \widehat{\Lam^*} = \frac{ q }{ sr_{d} } N_{sr_{d}} \cap \widehat{\Lam^*}$.
Since $\lam_{d}(N_q,\Lam)=\lam_{d} (\frac{ q }{ sr_M } N_{sr_M},\Lam) \leq 1 \leq sr_M/q$, it follows from
Lemma \ref{CLPCSLem5} that
\eq{
\# (N_q \cap \widehat{\Lam^*})
\leq \# \left( \frac{ q }{ sr_M } N_{sr_M} \cap \Lam^* \right)
\ll \left( \frac{ q }{ sr_M } \right)^{d} \# (N_{sr_M} \cap \Lam^*).
}
%where the last inequality holds by Lemma \ref{CLPCSLem5} with the fact that $\lam_{d}(N_q,\Lam)=\lam_{d} (\frac{ q }{ sr_{d} } N_{sr_{d}},\Lam^*) \leq 1 \leq sr_{d}/q$.
By $sr_M \leq ds^{-1/2}r_M$ and Lemma \ref{CLPCSLem3}, we have
\begin{align*}
\# (N_q \cap \widehat{\Lam^*})
&\ll \left( \frac{ q }{ sr_M } \right)^{d} \# (N_{ds^{-1/2}r_M} \cap \Lam^*) \\
&\ll \left( \frac{ q }{ sr_M } \right)^{d} \vol(N_{ds^{-1/2}r_M}) \\
&\ll \left( \frac{ q }{ sr_M } \right)^{2}s^{d-1/2}r_M \ll \frac{q^2 s^{-1/2}}{r_M}.
\end{align*}
The last line follows from $\frac{q}{sr_M}\leq 1$ and $s\leq 1$.
%Note that the implied constant depends only on $d$.
Thus we have
\eq{
\int_{q_{d}}^{sr_M} \frac{ \# (N_q \cap \widehat{\Lam^*}) }{ q^2 } \mathrm{d}q
\ll \int_{q_{d}}^{sr_M} \frac{s^{-1/2}}{r_M} \mathrm{d}q
\ll s^{1/2}.
}
It follows that $\eta_2 \ll s^{1/2}$ under the assumption of \eqref{count_lem_1},
which concludes the assertion \eqref{count_lem_1}.
\end{proof}

\begin{proof}[Proof of the assertion \eqref{count_lem_2}]
We will prove that $\eta \ll s^2$ under the assumption of \eqref{count_lem_2}.
Recall that we fix $0<s<1/2$. By the assumption, we have $q_{d+1} \geq q_1 \geq s^{-2} > 4$ so that $q_{d+1} < sr_M < (d+1)sr_M$ since $q_{d+1} \log q_{d+1} \leq sr_M$.
Thus $N_{(d+1)sr_M} \cap \Lam^*$ contains $d+1$ linearly independent vectors. By Lemma \ref{CLPCSLem3}, we have
\eqlabel{ubS7}{
\eta_1
\leq \frac{ \# (N_{(d+1)sr_M} \cap \Lam^*) }{ (d+1)sr_M } \ll \frac{ \vol (N_{(d+1)sr_M}) }{ (d+1)sr_M } \ll s^{d} \leq s^2.
}
By \eqref{ubS8}, it suffices to show that 
\eq{
 \int_{q_1}^{(d+1)sr_M} \frac{ \# (N_q \cap \widehat{\Lam^*}) }{ q^2 } \mathrm{d}q \ll s^2.
}
We split the domain of integration as $(q_1,q_{d}) \cup (q_{d},q_{d+1}) \cup (q_{d+1},sr_M) \cup (sr_M,(d+1)sr_M)$ and estimate the upper bounds of the integrals over these intervals.

For each $q \in (sr_M,(d+1)sr_M)$, it follows from Lemma \ref{CLPCSLem3} that $\# (N_q \cap \widehat{\Lam^*}) \ll \vol(N_q) \ll s^{d}q$. Thus we have
\eqlabel{ubS9}{
\int_{sr_M}^{(d+1)sr_M} \frac{ \# (N_q \cap \widehat{\Lam^*}) }{ q^2 } \mathrm{d}q
\ll \int_{sr_M}^{(d+1)sr_M} \frac{s^{d}}{ q } \mathrm{d}q 
= s^{d} \log (d+1) 
\ll s^2.
}
For each $q \in (q_{d+1},sr_M)$, it follows from Lemma \ref{CLPCSLem3} that $\# (N_q \cap \widehat{\Lam^*}) \ll \vol(N_q) \ll s^{d-1}q^2/r_M$. Thus we have
\eqlabel{ubS10}{
\int_{q_{d+1}}^{sr_M} \frac{ \# (N_q \cap \widehat{\Lam^*}) }{ q^2 } \mathrm{d}q
\ll \int_{q_{d+1}}^{sr_M} \frac{s^{d-1}}{ r_M } \mathrm{d}q 
%= \frac{ s^{d-1} (sr_M - q_{d+1}) }{ r_M }
\leq s^{d}
\leq s^2.
}
By (\ref{ubS5}), the integral over $(q_1,q_{d})$ is bounded above by $s^2$.

Now it remains to show that the integral over $(q_{d},q_{d+1})$ is bounded above by $s^2$.
Let $H = \Span_\bR(N_{q_{d}} \cap \Lam^*)$.
We claim that for every $q \in (q_{d},q_{d+1})$, 
\eqlabel{claimeq}{
\vol_H (N_q) \leq \frac{ q }{ q_{d+1} } \vol_H (N_{q_{d+1}}).
}
We first consider the case of $\be_{d+1}^* \notin H$. Let $\pr^*$ be the orthogonal projection onto $\Span_\bR \{ \be_1^*,\dots,\be_{d}^* \}$.
Then the volume of $\pr^* (N_q)$ is at most $q/q_{d+1}$ times the volume of $\pr^* (N_{q_{d+1}})$ since $q/r_M < q_{d+1}/r_M < s$. This implies \eqref{claimeq}.
We now assume $\be_{d+1}^* \in H$. Then it follows from $N_q \subset N_{q_{d+1}}$ that
\[\begin{split}
\vol_H (N_q) &= 2q \vol_{\pr^* (H)}\left(\pr^* (N_q)\right) \\
&\leq \frac{q}{q_{d+1}} 2q_{d+1} \vol_{\pr^* (H)}\left(\pr^* (N_{q_{d+1}})\right) =  \frac{q}{q_{d+1}} \vol_H (N_{q_{d+1}}).
\end{split}\]
%Comparing the diameters of $\be_{d+1}^*$-directional axis, the diameter of $N_{q_{d+1}}$ is $q/q_{d+1}$ times larger than the diameter of $N_q$.
%Since $N_q \subset N_{q_{d+1}}$, the claim holds in the case.
Thus the claim follows. %(\kim{should be more explained})

For each $q \in (q_{d},q_{d+1})$, we have
\begin{alignat*}{3}
\# (N_q \cap \widehat{\Lam^*}) 
& \ll \frac{ \vol_H(N_q) }{ \cov_H(\Lam^*) } &&\text{by Lemma \ref{CLPCSLem3}} \\
& \leq \frac{ q }{ q_{d+1} } \frac{ \vol_H(N_{q_{d+1}}) }{ \cov_H(\Lam^*) } &&\text{by \eqref{claimeq}} \\
& \ll \frac{ q }{ q_{d+1} } \# (N_{q_{d+1}} \cap H \cap \Lam^*) &&\text{by Lemma \ref{CLPCSLem3}} \\
& \ll \frac{ q }{ q_{d+1} } \# (N_{q_{d+1}}^\circ \cap H \cap \Lam^*) &&\text{by Lemma \ref{CLPCSLem4}} \\
& = \frac{ q }{ q_{d+1} } \# (N_{q_{d+1}}^\circ \cap \Lam^*) 
\leq \frac{ q }{ q_{d+1} } \# (N_{q_{d+1}} \cap \Lam^*) \\
& \ll \frac{ q }{ q_{d+1} } \vol(N_{q_{d+1}}) &&\text{by Lemma \ref{CLPCSLem3}} \\
& \ll s^{d-1} \frac{ q_{d+1} q }{ r_M }. 
\end{alignat*}
Therefore, we have
\eqlabel{ubS11}{
\int_{q_{d}}^{q_{d+1}} \frac{ \#(N_1 \cap \widehat{\Lam^*}) }{ q^2 } \mathrm{d}q 
\ll \int_{q_{d}}^{q_{d+1}} s^2 \frac{ q_{d+1} }{ sr_M } \frac{ 1 }{ q } \mathrm{d}q
\leq s^2 \frac{q_{d+1} \log q_{d+1} }{sr_M}
\leq s^2.
}
By combining \eqref{ubS5}, \eqref{ubS9}, \eqref{ubS10}, and \eqref{ubS11}, the proof of \eqref{count_lem_2} is completed.
\end{proof}
Combining the proofs of the two assertions, we complete the proof of Lemma \ref{CLPCSLem8}.
\end{proof}

For a weight vector $w = (w_1,\dots,w_d)$ as in the introduction, 
let $1\leq \ell \leq d-1$ be the unique integer such that 
$w_1 = \cdots = w_{\ell} > w_{\ell+1} \geq \cdots \geq w_d$,
and denote $$\xi = \max\left(1,\frac{d-\ell}{\ell}\right).$$
For a fixed lattice $\Lam \subset \bR^{d+1}$ and fixed $\mb{r},s$, we denote $q_i(\Lam,\bbr,s)$ by $q_i(\Lam)$ and $N_q(\bbr,s)$ by $N_q$ for simplicity. Let us fix a constant $C\geq 1$ which is an implied constant for the conclusion of Lemma \ref{CLPCSLem8} \eqref{count_lem_1} and \eqref{count_lem_2}.

\begin{lem}\label{CLPCSLem9}
Let $d \geq 2$, $s = \eps^2$, $\bbr = (r_1,\dots,r_{d+1}) = (\eps e^t,\dots,\eps e^t, 1)$, $\Lam \in \cK_{\eps^2}^* \cap \cL_{d+1}'$, and $a_t = \mathrm{diag}(e^{w_1 t},\dots, e^{w_d t}, e^{-t})$.
Then there exists a positive real number $\tilde{\eps} \leq 1 $ and $c=c(d) > (d+1)^{1/14}$ such that for all $\eps,t>0$ with $c e^{-w_d t/(2d^3)} < \eps < \tilde{\eps}$, one has
\[
\# \cS(a_t\Lam, \bbr, s) \leq \eps  \vol (M_\bbr).
\]
\end{lem}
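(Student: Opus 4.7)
The plan is to verify the hypotheses of Lemma \ref{CLPCSLem8}\eqref{count_lem_1} for the lattice $\Lam' := a_t\Lam$ with the given $\bbr$ and $s=\eps^2$, and then invoke that lemma. Since $r_1=\cdots=r_d=\eps e^t$ we have $r_m=r_M$, so case \eqref{count_lem_1} is the applicable one; once its hypotheses are checked, it yields $\#\cS(\Lam',\bbr,s)\leq C s^{1/2}\vol(M_\bbr)=C\eps\,\vol(M_\bbr)$, and picking $\tilde\eps\leq C^{-2}$ converts this to $\eps^{1/2}\vol(M_\bbr)$ for all $\eps<\tilde\eps$. What remains is to establish the two quantitative hypotheses
\[
q_1(\Lam',\bbr,s)\geq \eps^{-4}\qquad\text{and}\qquad q_{d+1}(\Lam',\bbr,s)\leq d e^t.
\]

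For the first bound I would argue by contradiction: suppose $\vphi\in N_{\eps^{-4}}\cap \Lam'^{*}$ is nonzero. Since $\Lam'^{*}=a_{-t}\Lam^{*}$, the vector $\psi:=a_t\vphi$ lies in $\Lam^{*}\smallsetminus\{0\}$. Unwinding the definitions of $N_{\eps^{-4}}$ and $\|\cdot\|_\bbr$, the constraints on $\vphi$ translate into
\[
|\psi_i|\leq \min\bigl(\eps^2 e^{w_i t},\ \eps^{-5} e^{-(1-w_i)t}\bigr)\ \text{for } i\leq d,\qquad |\psi_{d+1}|\leq \eps^{-4}e^{-t}.
\]
Using the second entry of the minimum together with $1-w_i\geq 1-w_1$ gives $\|\psi\|^2\leq d\eps^{-10}e^{-2(1-w_1)t}+\eps^{-8}e^{-2t}$. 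The hypothesis $\eps > c e^{-w_d t/(2d^3)}$ forces $w_d t\geq 2d^3(\log c+\log\eps^{-1})$; combining this with the elementary inequality $1-w_1=w_2+\cdots+w_d\geq (d-1)w_d$ and the condition $c>(d+1)^{1/14}$ yields $\|\psi\|^2<\eps^4$, contradicting $\Lam\in\cK^{*}_{\eps^2}$.

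For the second bound I would show that the rectangular box
\[
a_t N_{de^t}=\{\psi\in\cE_{d+1}^{*}:|\psi_i|\leq \eps^2 e^{w_i t}\ (i\leq d),\ |\psi_{d+1}|\leq d\},
\]
of volume $2^{d+1}d\,\eps^{2d}e^t$, contains $d+1$ linearly independent vectors of $\Lam^{*}$. Combining Minkowski's second theorem against $\Lam^{*}$ with the lower bound $\lambda_1(a_t N_{de^t},\Lam^{*})\gg e^{-w_1 t}$ (coming from $\Lam\in\cK^{*}_{\eps^2}$ and the inscribed radius of the box), together with the uniform compactness provided by $\Lam\in\cL_{d+1}'$, yields $\lambda_{d+1}(a_t N_{de^t},\Lam^{*})\leq 1$; the required largeness of $t$ is once more supplied by $\eps>c e^{-w_d t/(2d^3)}$.

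The main obstacle is the $q_{d+1}$ estimate: whereas $q_1$ is controlled by a single Euclidean norm bound on one dual vector, the $q_{d+1}$ bound concerns the $(d+1)$-th successive minimum of the highly anisotropic body $a_t N_{de^t}$ whose aspect ratios are dictated by the individual weights, and must hold uniformly as $\Lam$ ranges over $\cL_{d+1}'$. The threshold $c>(d+1)^{1/14}$ appears precisely to absorb the polynomial-in-$d$ constants (the $d$ in $d\eps^{-10}$, additive $\log$-corrections, etc.) when converting the dominant upper bound on $\|\psi\|^2$ into a strict inequality of the form $<\eps^4$.
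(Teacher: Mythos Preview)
Your reduction to Lemma \ref{CLPCSLem8}\eqref{count_lem_1} and your argument for the bound $q_1(a_t\Lam,\bbr,s)\geq \eps^{-4}$ are essentially the same as the paper's. The serious gap is in the $q_{d+1}$ estimate.

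The Minkowski-type argument you sketch does not close. With $K=a_t N_{de^t}$ one has $\vol(K)=2^{d+1}d\,\eps^{2d}e^{t}$ and, from $\Lam\in\cK_{\eps^2}^{*}$ together with the circumscribed radius of $K$, only $\lambda_1(K,\Lam^{*})\gtrsim e^{-w_1 t}$. Feeding this into $\prod_i\lambda_i(K,\Lam^{*})\ll \vol(K)^{-1}$ gives at best
\[
\lambda_{d+1}(K,\Lam^{*})\ \lesssim\ \frac{e^{dw_1 t}}{\eps^{2d}e^{t}}\ =\ \eps^{-2d}\,e^{(dw_1-1)t},
\]
and since $dw_1-1=\sum_{i=1}^{d}(w_1-w_i)\geq 0$ (with strict inequality whenever the weights are not all equal), this bound \emph{grows} with $t$ and never yields $\lambda_{d+1}\leq 1$. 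Your appeal to ``uniform compactness provided by $\Lam\in\cL_{d+1}'$'' does not rescue this: $\cL_{d+1}'$ is not a compact family; it merely records that $\Lam$ has a primitive vector $r\be_{d+1}$ with $1/2<r\leq 1$.

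The paper uses this structural information in a much more concrete way. It projects $\Lam$ onto $\Span(\be_1,\dots,\be_d)$, uses Lemma \ref{LBLem3} (which needs $\Lam\in\cK_{\eps^2}^{*}$) to bound $\lambda_1$ of the projected lattice from below, and hence $\lambda_d$ from above via Minkowski. This produces a Minkowski reduced basis $v^{(1)},\dots,v^{(d)}$ of $\mathrm{pr}(\Lam)$ with $\|v^{(i)}\|\ll \eps^{-2(d-1)^2}$; lifting to $\bv_1,\dots,\bv_d\in\Lam$ with bounded last coordinate and adjoining $\bv_{d+1}=r\be_{d+1}$ gives a basis of $\Lam$. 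Finally, via the identification $\Lam^{*}=\bigwedge_{\bZ}^{d}\Lam$, the $d$-fold wedges $\bigwedge_{j\neq i}\bv_j$ form an explicit basis of $\Lam^{*}$, and one checks coordinatewise that each such wedge lies in $a_t^{*}N_{de^t}$. The choice $c=c_3^{1/(2d^2)}>(d+1)^{1/14}$ arises from the norm bound on these wedges, not from the $q_1$ computation. Without this explicit basis construction (or something equivalent that genuinely exploits the $\be_{d+1}$-direction), the $q_{d+1}$ bound cannot be obtained.
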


\begin{proof}
We will prove the lemma for $\tilde{\eps} < 1/C^2$ and the constant $c$ will be determined later.
By Lemma \ref{CLPCSLem8} \eqref{count_lem_1}, it suffices to show that
\eqlabel{Lem9Claim}{
q_1(a_t \Lam) \geq s^{-2} \quad \mathrm{and} \quad q_{d+1}(a_t \Lam) \leq ds^{-1/2}r_d.
}
First, note that
\[
N_q \cap (a_t \Lam)^* = N_q \cap a_{-t}^* \Lam^* = a_{-t}^*(a_t^* N_q \cap \Lam^*),
\] where $a_t^{*}$ denotes the transpose of $a_t$.
Hence it is enough to show that $a_t^* N_{s^{-2}}$ has no nonzero lattice point of $\Lam^*$ for the first inequality of \eqref{Lem9Claim}.
Since $d\geq 2$ and $w_d \leq 1/d$, we have
\[
e^{-\frac{t}{7}}<e^{-\frac{w_d t}{2d^3}}< ce^{-\frac{w_d t}{2d^3}} < \eps,
\]
that is, $s^{-2} < r_1 s$.
Thus we have
\[
N_{s^{-2}} = M^*_{(s^{-2}/r_1, \dots, s^{-2}/r_1, s^{-2})}=M^*_{(\eps^{-5} e^{-t}, \dots, \eps^{-5} e^{-t}, \eps^{-4})},
\]
which implies that
\[
a_t^* N_{s^{-2}} = M^*_{(\eps^{-5} e^{(w_1-1)t},\dots, \eps^{-5} e^{(w_d-1)t}, \eps^{-4} e^{-t})}.
\]
Since for all $i=1,\dots,d$
\[
\frac{\eps}{(d+1)^{1/14}} > \frac{c}{(d+1)^{1/14}}e^{-\frac{w_d t}{2d^3}} > e^{-\frac{(d-1)w_d t}{7}} \geq e^{\frac{(w_i-1) t}{7}},
\]
we have $\eps^{-5} e^{(w_i-1)t} < \frac{\eps^2}{\sqrt{d+1}}$ for all $i=1,\dots,d$. It is clear that $\eps^{-4}e^{-t}<\eps^{-5}e^{(w_d -1)t}<\frac{\eps^2}{\sqrt{d+1}}$.
Thus $a_t^* N_{s^{-2}}$ is contained in the interior of $B_{\eps^2}(\cE_{d+1}^*)$.
Since $\Lam \in \cK_{\eps^2}^*$, there is no lattice point of $\Lam$ in $a_t^* N_{s^{-2}}$.

To show the second inequality of \eqref{Lem9Claim}, we will construct a basis for $\Lam^*$ whose vectors are contained in $N_{d s^{-1/2}r_d }=N_{de^t}$.
Since $ de^t > r_d > r_d s$, we have
\[
a_t^* N_{de^t}=a_t^* M^*_{(s,\dots,s,de^t)} = M^*_{(se^{w_1 t}, \dots se^{w_d t}, d)}.
\]
Let $1/2 < r \leq 1$ be such that $r \be_{d+1} \in \widehat{\Lam}$ from the assumption $\Lam \in \cL_{d+1}'$.
Let $\pr : \bR^{d+1} \to \bR^d$ be the orthogonal projection onto $\Span{(\be_1,\dots,\be_d)}$. 
Note that $\pr(\Lam)$ is a lattice with covolume $1/r$ in $\bR^d$.
If $\bv \in \Lam$ satisfies $\|\pr(\bv)\| = \lam_1(B_1,\pr(\Lam))$, then since $\Lambda \in \cK_{\eps^2}^*$,
it follows from Lemma \ref{LBLem3} with $D=d+1$ that
\begin{equation}\label{eq336}
\lam_1(B_1,\pr(\Lam)) \geq r \lam_1(B_1,\pr(\Lam)) = \|\bv \wedge r\be_{d+1}\| \geq \bar{c}_1 (\eps^2)^{d-1}
\end{equation}
for some $\bar{c}_1 = \bar{c}_1(d) <1$. Since $\mathrm{cov}(\pr(\Lam))=1/r$,
it follows from Minkowski's second theorem and (\ref{eq336}) that for any $0<c_1<\bar{c}_1$
\[
c_1^{d-1}(\eps^2)^{(d-1)^2}\lam_d(B_1,\pr(\Lam)) \leq \lam_1(B_1,\pr(\Lam))\cdots\lam_d(B_1,\pr(\Lam)) 
\ll 1.
\]
Hence, there exists $c_2 = c_2(d) > 1$ such that
\eqlabel{eqLam_d}{
\lam_d(B_1,\pr(\Lam)) \leq c_2 (\eps^{-2})^{(d-1)^2}.
}

Let $\{v^{(i)}: i=1,\dots,d\}$ be a Minkowski reduced basis for $\pr(\Lam)$ such that $\|v^{(i)}\| \leq 2^d \lam_i(B_1,\pr(\Lam))$. 
For each $i=1,\dots,d$, let $\bv_i \in \Lam$ be such that $\pr(\bv_i) = v^{(i)}$ and $|\be^*_{d+1}(\bv_i)|<1$.
Then the vectors $\bv_1,\dots,\bv_d,\bv_{d+1} = r \be_{d+1}$ form a basis for $\Lam$.
Recall that $\cE_{d+1}^*$ can be naturally identified with $\bigwedge_{\bR}^{d}\bR^{d+1}$ with the standard Euclidean structure. 
Under this identification, we have $\Lam^* = \bigwedge_{\bZ}^{d}\Lam$,
hence the vectors $\bigwedge_{j \neq i} \bv_j$ for $i=1,\dots,d+1$ form a basis for $\bigwedge_{\bZ}^{d}\Lam$.
We now claim that the vectors $\bigwedge_{j \neq i} \bv_j$ for $i=1,\dots,d+1$ are contained in $a_t^* N_{de^t}$ via the above identification,
which proves that $q_{d+1}(a_t \Lam) \leq ds^{-1/2}r_d$.

For each $i=1,\dots,d+1$, write
\[
\bigwedge_{j \neq i} \bv_j = \sum_{h=1}^{d+1} \left( x_h^{(i)} \bigwedge_{k \neq h} \be_k \right). 
\]
Note that $|x_{d+1}^{(d+1)}|=1/r \leq 2 \leq d$ and $x_{d+1}^{(i)}= 0$ for each $i=1,\dots,d$ since $\bv_{d+1} = r \be_{d+1}$.
By the definition of $\bv_i$ and \eqref{eqLam_d}, since $\eps<1$,
we can choose large enough $c_3 =c_3(d)>(d+1)^{d^2/7}$ such that for each $i=1,\dots,d$,
\[
\| \bv_i \| 
\leq \sqrt{1+\|v^{(i)}\|^2} 
\leq 2^{d}\sqrt{2} c_2  (\eps^{-2})^{(d-1)^2}
\leq c_3 (\eps^{-2})^{(d-1)^2}.
\]
Thus for each $i=1,\dots,d+1$ and $h=1,\dots,d$,
\[
|x_{h}^{(i)}|\leq
\left\| \bigwedge_{j \neq i} \bv_j \right\| 
\leq \prod_{j \neq i} \| \bv_j \| 
\leq c_3^d (\eps^{-2})^{d(d-1)^2}.
\]
From the assumption $c e^{-w_d t / (2 d^3)} < \eps$, it follows that
\[
c^{2 d^3} e^{-w_d t} < (\eps^{2})^{d^3} < (\eps^2)^{d(d-1)^2+1}.
\]
Choosing $c=c_3^{1/2d^2}>(d+1)^{1/14}$, we have 
\[
|x_{h}^{(i)}|  \leq c_3^d (\eps^{-2})^{d(d-1)^2} < \eps^2 e^{w_d t} = s e^{w_d t} \leq s e^{w_i t},
\]
which concludes the claim.
\end{proof}

\begin{lem}\label{CLPCSLem10}
Let $d \geq 2$, $\bbr = (r_1,\dots,r_{d+1})$, $\ov{b}_t = \diag{ \ov{b}_{t,1},\dots,\ov{b}_{t,d+1} }$, and $\Lam \in \cK^*_{\eps^2}$, where
\[
r_i = 
\begin{cases}
\eps e^{\left( \xi - \frac{1}{\ell} (w_{\ell+1} + \cdots + w_d) \right) t} & \quad \text{if}\quad 1\leq i \leq \ell, \\
\eps e^{(\xi+w_i)t} & \quad \text{if}\quad \ell+1 \leq i \leq d, \\
1 & \quad \text{if}\quad i = d+1,
\end{cases}
\]
and
\[
\ov{b}_{t,i} = 
\begin{cases}
e^{ \left( \xi w_i - \frac{1}{\ell} (w_{\ell+1} + \cdots + w_d ) \right) t} & \quad \text{if}\quad 1 \leq i \leq \ell, \\
e^{(1+\xi) w_i t} & \quad \text{if}\quad \ell+1 \leq i \leq d, \\
e^{-\xi t} & \quad \text{if}\quad i=d+1.
\end{cases}
\]
Then there exists a positive real number $\tilde{s} \leq 1$ such that for all $s,t>0$ with $e^{-\delta t} < \eps < s < \tilde{s}$ where $\delta = \frac{1}{18d^2} \min \left(\xi w_d,\xi w_1 - \frac{1}{\ell}(w_{\ell+1} + \cdots + w_d) \right)$, one has
\begin{equation}\label{eq338}
\# \cS (\ov{b}_t\Lam,\bbr,s) \leq s^2 \vol(M_\bbr).
\end{equation}
\end{lem}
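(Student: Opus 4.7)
The approach parallels Lemma \ref{CLPCSLem9}, but since here $r_m<r_M$ strictly (as verified below), we invoke Lemma \ref{CLPCSLem8}\eqref{count_lem_2} in place of \eqref{count_lem_1}. That lemma will give $\#\cS(\ov b_t\Lambda,\bbr,s)\ll_d s^2\vol(M_\bbr)$, and choosing $\tilde s$ smaller than the reciprocal of the implied constant upgrades this to the desired bound $\#\cS\leq s\vol(M_\bbr)$. The task therefore reduces to checking the three hypotheses of Lemma \ref{CLPCSLem8}\eqref{count_lem_2}.

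The inequality $r_m<r_M$ follows from a short computation: the identity $w_{\ell+1}+\cdots+w_d=1-\ell w_1>0$ forces $w_1-1/\ell<0<w_{\ell+1}$, which gives $r_m=r_1=\cdots=r_\ell=\eps e^{(\xi+w_1-1/\ell)t}$ and $r_M=r_{\ell+1}=\eps e^{(\xi+w_{\ell+1})t}$. For $q_1(\ov b_t\Lambda,\bbr,s)\geq s^{-2}$, we argue as in Lemma \ref{CLPCSLem9}: it suffices to show $\ov b_t^{\,*}N_{s^{-2}}\subset B^*_{\eps^2}(\cE^*_{d+1})$, since then $\Lambda\in\cK^*_{\eps^2}$ forces the intersection with $\Lambda^*$ to be trivial. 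This amounts to the coordinate-wise bounds $\ov b_{t,i}\min(s^{-2}/r_i,s)<\eps^2/\sqrt{d+1}$ for $i\leq d$ and $e^{-\xi t}s^{-2}<\eps^2/\sqrt{d+1}$, which under $e^{-\delta t}<\eps<s$ collapse into exponential inequalities of the shape $\xi>c\delta$ and $\xi(1-w_1)>c\delta$; the choice of $\delta$ is deliberately small enough that all such inequalities hold with room to spare for large $t$.

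The delicate hypothesis is $q_{d+1}\log q_{d+1}\leq sr_M$. Unlike in Lemma \ref{CLPCSLem9}, we are not given $\Lambda\in\cL_{d+1}'$, so the short vector $r\be_{d+1}$ previously used to build a basis of $\Lambda^*$ is no longer available; instead I would extract such a basis directly from Minkowski's second theorem. Since $\lambda_1(B_1^*,\Lambda^*)\geq\eps^2$ and $\Lambda^*$ is unimodular, $\lambda_{d+1}(B_1^*,\Lambda^*)\ll_d\eps^{-2d}$, and a Minkowski reduced basis $\varphi_1,\dots,\varphi_{d+1}$ of $\Lambda^*$ satisfies $\|\varphi_i\|\ll_d\eps^{-2d}$. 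The vectors $(\ov b_t^{\,*})^{-1}\varphi_i$ then form a basis of $(\ov b_t\Lambda)^*$, and a direct estimate shows that each lies in $N_q$ for $q=O(e^{\xi t}\eps^{-2d})$: the maximum of $r_j/\ov b_{t,j}$ over $j$ is attained at $j=d+1$ with value $e^{\xi t}$, which handles the $\|\cdot\|_\bbr$-bound, while the side conditions $|x_j^\varphi|\leq s$ for $j\leq d$ reduce once more to exponential inequalities compatible with the given $\delta$.

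The main obstacle is this last comparison. One has to make $\delta$ small enough that simultaneously $\ov b_t^{\,*}N_{s^{-2}}$ fits in the $\eps^2$-ball \emph{and} the worst-case Minkowski basis, of norm $\asymp\eps^{-2d}$, still satisfies $q\log q\leq sr_M\geq e^{(\xi+w_{\ell+1}-2\delta)t}$. The logarithmic factor from Lemma \ref{CLPCSLem8}\eqref{count_lem_2} and the $\eps^{-2d}$ Minkowski loss each consume some of the exponential margin; this is why $\delta$ is taken to be of order $1/(18d^2)$ against both $\xi w_d$ (which controls the dilations $\ov b_{t,j}=e^{(1+\xi)w_jt}$ for $\ell<j\leq d$) and $\xi w_1+w_1-1/\ell$ (which controls the dilations $\ov b_{t,j}=e^{(\xi w_1+w_1-1/\ell)t}$ for $j\leq\ell$). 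Once these two exponential margins are verified, Lemma \ref{CLPCSLem8}\eqref{count_lem_2} applies and yields \eqref{eq338}.
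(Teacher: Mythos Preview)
Your proposal is correct and follows essentially the same route as the paper: both verify the hypotheses of Lemma~\ref{CLPCSLem8}\eqref{count_lem_2} by showing $\ov b_t^{\,*}N_{s^{-2}}\subset B^*_{\eps^2}$ for the lower bound on $q_1$, and by extracting a Minkowski reduced basis of $\Lambda^*$ (with $\|\varphi_i\|\ll_d\eps^{-2d}$, using $\Lambda\in\cK^*_{\eps^2}$ and Minkowski's second theorem) to bound $q_{d+1}$, then absorbing the logarithmic loss and the $\eps^{-O(d)}$ factor into the exponential margin afforded by the choice of $\delta$. The paper packages the last step slightly differently---it shows $\varphi_i\in\ov b_t^{\,*}N_{e^{(\xi+w_d/2)t}}$, giving $q_{d+1}\leq e^{(\xi+w_d/2)t}$, rather than your $q_{d+1}\ll e^{\xi t}\eps^{-2d}$---but since $\eps^{-2d}<e^{2d\delta t}$ and $2d\delta<w_d/2$, the two bounds are equivalent for the purpose at hand.
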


\begin{proof}
Note that $r_m=r_1 <r_M = r_{\ell+1}$.
Take $t_0=t_0(w_1,\dots,w_d)>0$ such that for any $t > t_0$ we have
\begin{equation}\label{eq339}
e^{\frac{w_d}{20} t} \geq (\xi + \frac{w_d}{2}) t.
\end{equation}
Denote $c_4 = e^{-\delta t_0}$. Then $c_4 \in (0,1)$ depends only on the weights $w_1,\dots,w_d$, and 
the inequality \eqref{eq339} holds whenever $e^{-\delta t} < c_4$.
Let \eq{
\tilde{s} = \min \left( \frac{1}{C}, c_4,\frac{1}{\sqrt{d+1}},\left(\frac{\vol(B_1)}{4^{d+1}}\right)^{1/d}\right)
\leq 1.
}
By Lemma \ref{CLPCSLem8} \eqref{count_lem_2}, it suffices to show that for $e^{-\delta t} < \eps < s < \tilde{s}$,
\[
q_1(\ov{b}_t \Lam) \geq s^{-2} \quad \mathrm{and} \quad q_{d+1}(\ov{b}_t \Lam) \log q_{d+1}(\ov{b}_t \Lam) \leq s \eps e^{ (\xi + w_{\ell+1}) t }.
\]

Since $e^{-\delta t} < \eps < s$, it follows from
$s^{-3}\eps^{-1}< \eps^{-4}<e^{4\delta t}$ that $s^{-2}/r_{i} < s$ for all $i=1,\dots,d$, hence
\[
\begin{split}
\ov{b}_t^* N_{s^{-2}}&=\ov{b}_t^* M^*_{(\frac{s^{-2}}{r_1},\dots,\frac{s^{-2}}{r_d},s^{-2})}\\
&=M^*_{\left(e^{\xi (w_1-1)t} \eps^{-1} s^{-2},\dots,e^{\xi (w_d-1)t} \eps^{-1} s^{-2},e^{-\xi t} s^{-2}\right)}.
\end{split}
\]
%From the assumption $e^{-\delta t} < \eps < s$, we have
%\begin{equation}\label{eq340}
%s^{3d} \eps^{3d} > \eps^{6d} > e^{ -6d \delta t}.
%\end{equation}
Since $\tilde{s}\leq \frac{1}{\sqrt{d+1}}$, we have for all $i=1,\dots,d$,
\[
\frac{s^2 \eps^3}{\sqrt{d+1}}>\eps^{6} > e^{-6\delta t}>
e^{ -(\xi w_1 - \frac{1}{\ell}(w_{\ell+1} + \cdots + w_d) ) t}  \geq e^{\xi (w_i -1) t},
\] 
and 
\[
\frac{s^2 \eps^2}{\sqrt{d+1}} > \eps^{5} > e^{-5\delta t} > e^{-\xi w_d t}>e^{-\xi t},
\] 
hence it follows that $\ov{b}_t^* N_{s^{-2}}$ is contained in the interior of $B_{\eps^2} (\cE_{d+1}^*)$.
Since $\Lam \in \cK_{\eps^2}^*$, there is no lattice point of $\Lam$ in $\ov{b}_t^* N_{s^{-2}}$, which concludes
$q_1(\ov{b}_t \Lam) \geq s^{-2}$ as in the proof of the first inequality of \eqref{Lem9Claim}.

Since $\xi=\max(1,\frac{d-\ell}{\ell})<d$, we have
\eqlabel{eq340}{
s\eps >\eps^2 >e^{-\frac{1}{9d^2}\xi w_d t}>e^{-\frac{1}{9d} w_d t},
}
which implies that
\[
e^{\frac{w_d}{2}t}=e^{-\frac{w_d}{2}t}e^{w_d t}<e^{-\frac{1}{9d}w_d t}e^{w_{d}t}<s\eps e^{w_{d}t},
\] hence $e^{(\xi+\frac{w_d}{2})t}<r_{d}s$. On the other hand, it is clear that 
$r_\ell s< e^{(\xi+\frac{w_d}{2})t}$, hence
$\ov{b}_t^* N_{e^{(\xi+w_d/2)t}}$ is the set of $\varphi = x_1^\varphi \be_1^* + \cdots + x_{d+1}^\varphi \be_{d+1}^* \in \cE_{d+1}^*$ such that
\[
\begin{cases}
|x_i^\varphi| \leq s e^{ \left( \xi w_i - \frac{1}{\ell}(w_{\ell+1} + \cdots + w_d) \right) t } \qquad & \text{for } 1 \leq i \leq \ell, \\
|x_i^\varphi| \leq \eps^{-1} e^{ ( \xi w_i + \frac{w_d}{2} ) t } \qquad & \text{for } \ell+1 \leq i \leq d, \\
|x_i^\varphi| \leq e^{ \frac{1}{2} w_d t} \qquad& \text{for } i=d+1.
\end{cases}
\]
It follows from $\Lam \in \cK_{\eps^2}^*$ that $\lam_1(B_1,\Lam^*) \geq \eps^2$. By Minkowski's second theorem, we have
$$\eps^{2d}\lam_{d+1}(B_1,\Lam^*)\leq \lam_{1}(B_1,\Lam^*)\cdots\lam_{d+1}(B_1,\Lam^*) \leq \frac{2^{d+1}}{\vol(B_1)}.$$
Hence $\lam_{d+1}(B_1,\Lam^*)\leq \frac{2^{d+1}}{\vol(B_1)} \eps^{-2d}$. 
Thus there exists a Minkowski reduced basis $\varphi_1, \dots , \varphi_{d+1}$ of $\Lam^*$ such that 
$\| \varphi_i \| \leq \frac{4^{d+1}}{\vol(B_1)} \eps^{-2d} \leq \eps^{-3d}$ for all $i=1,\dots d+1$ 
since $\eps^d <\tilde{s}^d \leq \frac{\vol(B_1)}{4^{d+1}}$.
Recall that $w_1=\cdots=w_\ell$. It can be easily checked that 
$\varphi_i$'s are contained in $\ov{b}_t^* N_{e^{(\xi+w_d/2)t}}$.
Thus $q_{d+1} (\ov{b}_t \Lam) \leq e^{(\xi + w_d/2)t}$ so that
\begin{alignat*}{3}
q_{d+1}(\ov{b}_t \Lam) \log q_{d+1}(\ov{b}_t \Lam) 
& \leq e^{(\xi + \frac{w_d}{2})t} (\xi + \frac{w_d}{2})t  \\
& \leq e^{(\xi + \frac{w_d}{2})t} e^{\frac{w_d t}{20}} \quad &&\text{by (\ref{eq339})} \\
& \leq s\eps e^{(\xi + w_d)t} \quad &&\text{by (\ref{eq340})}\\
& \leq s\eps e^{(\xi + w_{\ell+1})t}.
\end{alignat*}
\end{proof}

% Section : Lower bound %%%%%%%%%%%%%%%%%%%%%%%%%%%%%%%%%%%%%%%%%%%%%%%%%%%%%%%%%%%%%%%%%%%%%%%%%%%%

\section{Lower bound}\label{sec4}

% subsection : Construction of the fractal set %%%%%%%%%%

\subsection{Construction of the fractal set}\label{sec4.1}
For a given weight vector $w=(w_1,\dots,w_d)$, recall that $1\leq \ell \leq d-1$ is the unique integer such that 
$w_1 = \cdots = w_{\ell} > w_{\ell+1} \geq \cdots \geq w_d$, 
and $\xi = \max(1,\frac{d-\ell}{\ell})$ (see \textsection \ref{subsec3.2}).
Since $1<\zeta(d+1)<2$, we can choose a real number $c_0>1$ such that
\eqlabel{c0eq}{
\frac{1}{10}<\left(\frac{2}{c_0}-c_0\right) \frac{1}{\zeta(d+1)}\quad\text{and}\quad\frac{c_0}{\zeta(d+1)}<1.
} 
Let $\tilde{c}\leq 1$ be a positive real number as in Lemma \ref{CLPCSLem7} with respect to the above $c_0$, and let $\tilde{\eps}, \tilde{s} \leq 1$ be positive real numbers as in Lemmas \ref{CLPCSLem9} and \ref{CLPCSLem10}, respectively.
We fix the constants $\eps, t, r >0$ with the following properties:
\begin{enumerate}
\item\label{epsrprop} $0<\eps<r<\frac{1}{10^4 4^d}\min\{\tilde{\eps}, \tilde{s} \}$;
\item $t\geq 1$ will be chosen large enough so that \eqref{t1}, \eqref{t2}, \eqref{t3}, \eqref{t4}, \eqref{t5}, \eqref{t6}, \eqref{t7}, \eqref{t8} hold.
\end{enumerate}
Let $\{\eps_n\}$ and $\{t_n\}$ be the sequence defined as follows: for $n \in \bN$,
\begin{enumerate}
\item $\eps_n = \eps/n$;
\item $t_n - t_{n-1} = \xi nt$ and $t_0=0$.
\end{enumerate}

We will construct the tree $\cT$ whose vertices are in the set $\bQ^d$ of rational vectors and the map $\beta$ from the set of vertices of $\cT$ to the set of compact subsets in $\bR^{d}$, inductively.
We first set the root of $\cT$ to be zero, that is, $\tau_0=\mb{0}$ and define
\eq{ \beta(\tau_0) = \{ x\in\bR^{d} : |(\tau_0)_i - x_i| < e^{-w_i t_1}, \forall i=1,\dots,d \}. }
For each $\tau \in \cT_{n}$ with $n\geq 1$, let
\[
\tilde{\beta}(\tau) = \{ x \in \bR^{d} : |\tau_i - x_i| < \eps_{n+1} e^{-w_i t_{n+1} - t_{n}}, \forall i=1,\dots,d \}.
\]
Recall that $a_t = \diag{e^{w_1 t},\dots,e^{w_d t},e^{-t}}$ and
$h(x)=\mat{I_d & x \\ 0 & 1}$ for $x\in\bR^d$.
Denote by  
\[
b_n = \text{diag} ( e^{ -\frac{1}{\ell}(w_{\ell+1} + \cdots + w_d) nt }, \dots, e^{ -\frac{1}{\ell}(w_{\ell+1} + \cdots + w_d) nt }, e^{w_{\ell+1} nt}, \dots, e^{w_d nt}, 1 ).\]
Note that the first $\ell$ terms of $b_n$ are the same.

For each $\kappa\in \cT_{n-1}$, we define $\cT(\kappa)$ as the set of all $\tau \in \tilde{\beta}(\kappa)$ with the following properties:
\eqlabel{treecondition}{
\begin{split}
a&_{t_n} h(\tau) \bZ^{d+1} \in \cL_{d+1}', \\
a&_{t_n} h(\tau) \bZ^{d+1} \in \cK_{\eps_n^2}^*, \\
b&_n a_{t_n} h(\tau) \bZ^{d+1} \in \cK_r^*.
\end{split}
}
We use the first property to show that the self-affine structure  is indeed a subset of the set of weighted singular vectors.
The second property is intended to estimate the number of sons of given vertices.
Finally, the third property is used to show that the rectangles in the self-affine structure are well-separated.

It follows from the definitions of $\cT(\kappa)$ and $\cL_{d+1}'$ that $\tau \in \bQ^d$ and for any $\tau \in \cT(\kappa)$ there exists a unique vector
\eqlabel{eqvec}{ \bv(\tau) \in \{r\be_{d+1} : 1/2 < r \leq 1 \} \cap a_{t_n}h(\tau)\bZ^{d+1}. }
Note that $(d+1)$-th coordinate of $\bv(\tau)$ is $q e^{-t_n}$ for some $q \in \bZ$ such that $1/2 < qe^{-t_n} \leq 1$.
Since $t_n \geq t_{n-1} + 1$, $\cT_n$ has empty intersection with $\bigcup_{0 \leq i \leq n-1} \cT_i$, which
implies that $\cT$ is a rooted tree.

For each $\tau \in \cT(\kappa)$ with $\kappa \in \cT_{n-1}$, define
\eq{ \beta(\tau) = \{ x\in\bR^{d} : |\tau_i - x_i| < \eps_{n} e^{-w_i t_{n+1}- t_{n}}, \forall i=1,\dots,d \}. }
Note that for each $\tau \in \cT(\kappa)$, it follows from the definitions of $\tilde{\beta}$ and $\beta$ that
$\beta(\tau)\subset \beta(\kappa)$.  
If follows from Lemma \ref{LBLem1} below that each vertex of $\cT$ has sons by choosing $t\geq 1$ large enough so that for any $n\in\bN$
\eqlabel{t1}{\frac{1}{100} \eps_n^d e^{\xi d nt} \geq 1.} 
Hence the pair $(\cT,\beta)$ is a regular self-affine structure.

\begin{lem}\label{LBLem1}
For every $n \in \bN$ and $y \in \cT_{n-1}$ one has
\eq{ \frac{1}{100} \eps_n^d e^{\xi d nt} \leq \#\cT(y) \leq 2^{d+1} \eps_n^d e^{\xi d nt}. }
\end{lem}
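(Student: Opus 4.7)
The plan is to realize $\cT(y)$ as a set of primitive lattice points in a specific region of the lattice $\Lam_y:=a_{t_n}h(y)\bZ^{d+1}$, and then combine a lower bound on the total count via Section~\ref{sec3} with upper bounds on the ``bad'' lattice points violating conditions~(2) or~(3) of \eqref{treecondition} via Lemmas~\ref{CLPCSLem9} and~\ref{CLPCSLem10}. Each $\tau\in\cT(y)$ is identified with a unique primitive pair $(p,q)\in\bZ^{d+1}$ via $\tau=-p/q$ with $q\in(e^{t_n}/2,e^{t_n}]$, using the vector $\bv(\tau)=qe^{-t_n}\be_{d+1}$ from \eqref{eqvec}. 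Setting $\tilde\Lam:=a_{t_{n-1}}h(y)\bZ^{d+1}$, the image $\bw:=a_{t_n}h(y)(p,q)^T=(e^{w_it_n}(p_i+qy_i),\,qe^{-t_n})$ is a primitive vector of $\Lam_y=a_{\xi nt}\tilde\Lam$, and the condition $\tau\in\tilde\beta(y)$ becomes $\bw\in R$, where
\[
R:=\{\bz\in\bR^{d+1}: z_{d+1}\in(1/2,1],\ |z_i|<z_{d+1}\eps_n e^{\xi nt},\ i=1,\dots,d\};
\]
the set $R$ lies inside the centrally symmetric box $M_\bbr$ with $\bbr=(\eps_n e^{\xi nt},\dots,\eps_n e^{\xi nt},1)$ of volume $2^{d+1}\eps_n^d e^{\xi dnt}$. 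The inductive conditions at level $n-1$ (verified trivially when $y=\tau_0$) yield $\tilde\Lam\in\cL_{d+1}'\cap\cK^*_{\eps_{n-1}^2}\subset\cL_{d+1}'\cap\cK^*_{\eps_n^2}$, which is the required hypothesis for Lemmas~\ref{CLPCSLem9} and~\ref{CLPCSLem10}.

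For the upper bound, since every element of $\cT(y)$ corresponds to a primitive point of $M_\bbr\cap\wh{\Lam_y}$, Lemma~\ref{CLPCSLem7} applied to $\Lam_y$, with $t$ large enough that its successive-minima hypotheses hold, yields $\#\cT(y)\leq (1+o(1))\vol(M_\bbr)/\zeta(d+1)\leq 2^{d+1}\eps_n^d e^{\xi dnt}$. For the lower bound, I write
\[
\#\cT(y)\geq\#(R\cap\wh{\Lam_y})-\#\cB_2-\#\cB_3,
\]
where $\cB_j$ consists of those $\bw\in R\cap\wh{\Lam_y}$ whose associated $\tau$ violates condition~($j$) in \eqref{treecondition}. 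An application of Lemma~\ref{CLPCSLem7} to a shrunken symmetric sub-box contained in $R$, combined with the central symmetry of $\Lam_y$ to accommodate the one-sided constraint $z_{d+1}\in(1/2,1]$, gives a main term $\#(R\cap\wh{\Lam_y})\geq c_d\eps_n^d e^{\xi dnt}$ for an explicit $c_d>0$. A short computation shows that a violation of condition~(2) at $\tau=-p/q$ produces a primitive $(m,k)\in\bZ^{d+1}$ with $(m,k)\cdot(p,q)=0$ whose image in $\wh{\Lam_y^*}$ lies in $N_{(d+1)sr_M}(\bbr,s)$ for $s=\eps_n^2$; hence $\cB_2\subset\cS(a_{\xi nt}\tilde\Lam,\bbr,s)$ and Lemma~\ref{CLPCSLem9} yields $\#\cB_2\leq\eps_n^{1/2}\vol(M_\bbr)$. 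Using the identity $b_na_{t_n}=\ov{b}_{nt}a_{t_{n-1}}$ and checking that $\bw\in R$ implies $b_n\bw\in M_{\bbr'}$ for the vector $\bbr'$ of Lemma~\ref{CLPCSLem10}, an analogous argument gives $\cB_3\subset\cS(\ov{b}_{nt}\tilde\Lam,\bbr',\eps_n)$, and Lemma~\ref{CLPCSLem10} gives $\#\cB_3\leq\eps_n\vol(M_{\bbr'})=2^{d+1}\eps_n^{d+1}e^{\xi dnt}$. Summing, $\#\cB_2+\#\cB_3\ll_d\eps_n^{d+1/2}e^{\xi dnt}$, and choosing $\eps$ small enough (depending only on $d$) this is dominated by the main term, yielding $\#\cT(y)\geq\frac{1}{100}\eps_n^d e^{\xi dnt}$.

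The main obstacle is verifying, uniformly in $n$, the quantitative hypotheses on $(\eps_n,t)$ required by Lemmas~\ref{CLPCSLem7},~\ref{CLPCSLem9}, and~\ref{CLPCSLem10}, especially the constraints of the form $ce^{-w_dt/(2d^3)}<\eps_n$ and $e^{-\delta t}<\eps_n<s<\tilde s$; these translate into conditions on the global parameters $(\eps,r,t)$ and are precisely what the quantitative choices \eqref{t1}--\eqref{t8} at the start of \textsection\ref{sec4.1} are designed to secure. A second subtle point is that \eqref{treecondition} is imposed on the $\tau$-dependent lattices $a_{t_n}h(\tau)\bZ^{d+1}$ and $b_na_{t_n}h(\tau)\bZ^{d+1}$, whereas the counting is carried out in the fixed $\Lam_y$; the bridge is the preservation of orthogonality $(m,k)\perp(p,q)$ in $\bZ^{d+1}$ under change of basis, together with the observation that the displacement between $\Lam_\tau^*$ and $\Lam_y^*$ on the relevant short dual vectors is of strictly smaller order than the coordinate thresholds entering the definition of $\cS$.
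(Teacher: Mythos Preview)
Your approach is essentially the one the paper takes: identify the candidates in $\cT(y)$ with primitive lattice points of $\Lam_y=a_{\xi nt}\tilde\Lam$ lying in the truncated cone $R\subset M_\bbr$, get the main term from Lemma~\ref{CLPCSLem7} applied to the boxes $M^{(1)},M^{(2)}$ sandwiching $R$, and then subtract the exceptional contributions using Lemmas~\ref{CLPCSLem9} and~\ref{CLPCSLem10}. The paper packages this as an auxiliary lemma giving the three estimates \eqref{eq412}--\eqref{eq414}, but the underlying bijection $x\mapsto a_{t_n}h(y-x)a_{t_n}^{-1}\bv(x)$ is exactly your map $\tau\mapsto\bw$, and your identity $b_na_{t_n}=\ov b_{nt}\,a_{t_{n-1}}$ is precisely the paper's $a^{(2)}=b_na_{\xi nt}$.

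There is one slip in the treatment of $\cB_3$. The third condition in \eqref{treecondition} uses the fixed threshold $r$, not $\eps_n$; when it fails, the short dual vector one produces has norm $<r$, so the argument only yields $\cB_3\subset\cS(\ov b_{nt}\tilde\Lam,\bbr',r)$, not $\cS(\cdot,\cdot,\eps_n)$. This is not cosmetic: with $s=\eps_n$ the hypothesis $\eps<s$ of Lemma~\ref{CLPCSLem10} would read $\eps_n<\eps_n$, and more importantly the inclusion itself is unjustified since the dual coordinates are only bounded by $r$. With the correct choice $s=r$ (and $\eps=\eps_n<r<\tilde s$, as arranged in the standing assumptions), Lemma~\ref{CLPCSLem10} gives $\#\cB_3\le r\,\vol(M_{\bbr'})=2^{d+1}r\,\eps_n^{\,d}e^{\xi dnt}$, which is absorbed by the main term because $r<\tfrac{1}{10^4\cdot 4^d}$. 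After this correction your argument goes through and matches the paper's.
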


For fixed $n \in \bN$ and $y \in \cT_{n-1}$, we let
\begin{align*}
&\Lambda = a_{t_{n-1}} h(y)  \bZ^{d+1} \in \cL'_{d+1} \cap \cK_{\eps_{n-1}^2}^*, \\
&\Lambda_1 = a_{t_n} h(y)  \bZ^{d+1} = a_{\xi nt}\Lam, \\
&\Lambda_2 = b_n a_{t_n} h(y)  \bZ^{d+1} = b_n a_{\xi nt} \Lam,
\end{align*}
 and for $x \in \tilde{\beta}(y)$,
\begin{align*}
&\Lambda_1(x) := a_{t_n} h(x)  \bZ^{d+1} = a_{t_n} h(x-y) a_{t_n}^{-1} \Lam_1, \\
&\Lambda_2(x) := b_n a_{t_n} h(x)  \bZ^{d+1} = b_n a_{t_n} h(x-y) a_{t_n}^{-1} b_n^{-1} \Lam_2.
\end{align*}

The lattices $\Lambda_1(x)$ and $\Lambda_2(x)$ satisfy $\Lambda_1(x) \in \cL_{d+1}' \cap \cK_{\eps_n^2}^*$ and $\Lambda_2(x) \in \cK_r^*$ if and only if $x \in \cT(y)$.
Hence Lemma \ref{LBLem1} follows from the following lemma.

\begin{lem}\label{LBLem2}
Let $n \in \bN$ and $y \in \cT_{n-1}$. Then
\begin{align}
\frac{1}{10} \eps_n^d e^{ \xi d nt} \leq \#\{x \in \tilde{\beta}(y) : \Lambda_1(x) \in \cL_{d+1}' \} \leq 2^{d+1} \eps_n^d e^{\xi d nt}, \label{eq412} \\
\#\{x \in \tilde{\beta}(y) : \Lambda_1(x) \in \cL_{d+1}' \smallsetminus \cK_{\eps_n^2}^*  \} \leq \frac{8}{100}\eps_n^d e^{\xi d nt}, \label {eq413}\\
\#\{x \in \tilde{\beta}(y) : \Lambda_2(x) \in \cL_{d+1}' \smallsetminus \cK_r^*  \} \leq \frac{1}{100}\eps_n^d e^{\xi d nt}. \label{eq414}
\end{align}
\end{lem}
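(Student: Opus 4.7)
The plan is to translate each of \eqref{eq412}, \eqref{eq413}, \eqref{eq414} into a count of primitive vectors of $\Lam_1$ or $\Lam_2$ in an explicit box, and then invoke Lemmas \ref{CLPCSLem7}, \ref{CLPCSLem9}, \ref{CLPCSLem10} respectively. The foundational dictionary is the following: $\Lam_1(x) \in \cL_{d+1}'$ iff there exists a primitive pair $(\bp, q) \in \widehat{\bZ^{d+1}}$ with $q \in (e^{t_n}/2, e^{t_n}]$ and $x = -\bp/q$. Writing $\bv = a_{t_n} h(y)(\bp, q)^T \in \widehat{\Lam_1}$, one has $v_{d+1} = qe^{-t_n} \in (1/2, 1]$ and $v_i = -e^{(w_i+1)t_n}(x_i - y_i) v_{d+1}$, so the constraint $x \in \tilde\beta(y)$ becomes $|v_i| < v_{d+1}\,\eps_n e^{\xi nt}$. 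In particular $\bv \in M_{\bbr}$ with $\bbr = (\eps_n e^{\xi nt}, \dots, \eps_n e^{\xi nt}, 1)$ and $\vol(M_{\bbr}) = 2^{d+1}\eps_n^d e^{\xi d nt}$.

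For \eqref{eq412} I would apply Lemma \ref{CLPCSLem7} to $(M_{\bbr}, \Lam_1)$. The upper bound is immediate: $\#\{x\} \le \#(M_{\bbr} \cap \widehat{\Lam_1}) \le \tfrac{c_0}{\zeta(d+1)}\vol(M_{\bbr}) \le 2^{d+1}\eps_n^d e^{\xi d nt}$ for $c_0$ close to $1$. For the lower bound one uses $\pm\bv$-symmetry of primitives to write $\#\{x\} = \tfrac12[\#(M_{\bbr} \cap \widehat{\Lam_1}) - \#(M_{\bbr'} \cap \widehat{\Lam_1})]$ where $\bbr' = (r_1, \dots, r_d, \tfrac12)$, and two applications of Lemma \ref{CLPCSLem7} give a constant multiple of $\eps_n^d e^{\xi d nt}$ exceeding $\tfrac{1}{10}$. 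The successive-minima hypotheses $\lam_{d+1}(M_{\bbr}, \Lam_1) \le \tilde c$ and $-\lam_{d+1}\log\lam_1 \le \tilde c$ required by that lemma are verified by combining Minkowski's second theorem (so $\prod \lam_i \asymp \vol(M_{\bbr})^{-1}$ is exponentially small in $t$) with Khintchine transference applied to $\lam_1(B_1, \Lam^*) \ge \eps_{n-1}^2$ coming from $\Lam \in \cK^*_{\eps_{n-1}^2}$, after arranging $t$ large enough in \eqref{t1}--\eqref{t8}. This verification is the most delicate technical step.

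For \eqref{eq413} I rewrite the defect $\Lam_1(x) \notin \cK^*_{\eps_n^2}$ via the conjugation $g := a_{t_n}h(x-y)a_{t_n}^{-1} = h(\til u)$ with $\til u_i = e^{(w_i+1)t_n}(x_i - y_i)$: any primitive $\varphi \in \Lam_1(x)^*$ of norm less than $\eps_n^2$ corresponds to $\psi \in \widehat{\Lam_1^*}$ with $\|\psi h(-\til u)\| < \eps_n^2$. Because $h(-\til u)$ alters only the last coordinate, this becomes $|\psi_i| < \eps_n^2$ for $i \le d$ together with $|\psi_{d+1} - \til u \cdot \psi| < \eps_n^2$; substituting $\til u = -\bv/v_{d+1}$ from the dictionary turns the latter into $|\psi(\bv)| < \eps_n^2 v_{d+1} < 1$, forcing the integer $\psi(\bv) = 0$. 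A direct check then places $\psi$ in $N_{(d+1)sr_M}(\bbr, s)$ with $s = \eps_n^2$ and $r_M = \eps_n e^{\xi nt}$, so each bad $x$ contributes a vector in $\cS(\Lam_1, \bbr, s)$. Noting that $\Lam \in \cK^*_{\eps_{n-1}^2} \subset \cK^*_{\eps_n^2}$ and that $ce^{-w_d \xi nt/(2d^3)} < \eps_n$ for $t$ large, Lemma \ref{CLPCSLem9} yields $\#\cS(\Lam_1, \bbr, \eps_n^2) \le \eps_n^{1/2}\vol(M_\bbr) = 2^{d+1}\eps_n^{d+1/2} e^{\xi d nt} \le \tfrac{8}{100}\eps_n^d e^{\xi d nt}$, the last inequality using $\eps < 1/(10^4 \cdot 4^d)$.

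The argument for \eqref{eq414} is parallel but uses Lemma \ref{CLPCSLem10}. A direct diagonal computation verifies $\ov{b}_{nt}^{-1} b_n a_{t_n} = a_{t_{n-1}}$, whence $\Lam_2 = \ov{b}_{nt} \Lam$. Replacing $\bv$ by $\bv' = b_n \bv \in \widehat{\Lam_2}$ and $\til u$ by $\til u^{(2)}$ with $\til u^{(2)}_i = b_{n,i} e^{(w_i+1)t_n}(x_i - y_i) = -v'_i/v'_{d+1}$, the same dichotomy shows that $\Lam_2(x) \notin \cK^*_r$ forces $\bv' \in \cS(\Lam_2, \bbr', r)$, where $\bbr'$ exactly matches the box of Lemma \ref{CLPCSLem10} at its time $nt$ with $\eps = \eps_{n-1}$. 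Applying that lemma (valid since $\Lam \in \cK^*_{\eps_{n-1}^2}$) gives $\#\cS \le r \cdot 2^{d+1}\eps_{n-1}^d e^{\xi d nt} \le \tfrac{1}{100}\eps_n^d e^{\xi d nt}$ using $r < 1/(10^4 \cdot 4^d)$ and the ratio $(\eps_{n-1}/\eps_n)^d = (n/(n-1))^d \le 2^d$ for $n \ge 2$. The main obstacles I anticipate are (i) the transference-based verification of the successive-minima hypotheses in \eqref{eq412}, and (ii) the diagonal bookkeeping $\Lam_2 = \ov{b}_{nt}\Lam$ that aligns our $\bbr', s$ with the parameters required by Lemma \ref{CLPCSLem10}.
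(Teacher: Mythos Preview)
Your approach is essentially the paper's: the dictionary $x\leftrightarrow \bv\in\widehat\Lam_1$, the two-box difference for \eqref{eq412}, and the reduction of \eqref{eq413}, \eqref{eq414} to the sets $\cS(\Lam_j,\bbr_j,s_j)$ via the conjugate $h(z^{(j)})$ all match exactly, including the key observation that $\varphi_j(\bw_j(x))\in\bZ$ with $|\varphi_j(\bw_j(x))|<1$ forces $\varphi_j(\bw_j(x))=0$.

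There is one genuine slip in the lower half of \eqref{eq412}. Your displayed identity
\[
\#\{x\}=\tfrac12\bigl[\#(M_{\bbr}\cap\widehat{\Lam_1})-\#(M_{\bbr'}\cap\widehat{\Lam_1})\bigr]
\]
is only an inequality $\le$: the true image of $\{x\}$ is the cone $\{\,|v_i|<v_{d+1}\eps_n e^{\xi nt},\ v_{d+1}\in(1/2,1]\,\}$, which is properly contained in the slab $(M_{\bbr}\setminus M_{\bbr'})\cap\{v_{d+1}>0\}$ since the latter only requires $|v_i|\le \eps_n e^{\xi nt}$. So your formula gives an upper bound, not a lower one. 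The paper fixes this by shrinking the first $d$ side-lengths by a factor $1/2$: with $M^{(1)}=\{|z_i|\le\tfrac12\eps_n e^{\xi nt},\ |z_{d+1}|\le 1\}$ and $M^{(2)}=\{|z_i|\le\tfrac12\eps_n e^{\xi nt},\ |z_{d+1}|\le\tfrac12\}$ one has $M^{(1)}\setminus M^{(2)}\subset M$, and two applications of Lemma~\ref{CLPCSLem7} give $\#(M\cap\widehat\Lam_1)\ge\bigl(\tfrac{2}{c_0}-c_0\bigr)\tfrac{1}{\zeta(d+1)}\eps_n^d e^{\xi dnt}$, which exceeds $\tfrac{1}{10}\eps_n^d e^{\xi dnt}$ for $c_0$ close to $1$. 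A minor side remark: in \eqref{eq414} the box $\bbr_2$ matches Lemma~\ref{CLPCSLem10} with $\eps=\eps_n$ rather than $\eps_{n-1}$ (so $\vol(M_{\bbr_2})=2^{d+1}\eps_n^d e^{\xi dnt}$ directly and no ratio correction is needed), though your version still works since $\cS$ is monotone in $\bbr$ and $\Lam\in\cK^*_{\eps_{n-1}^2}$ holds on the nose.
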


\begin{proof} % later %%%%%%%%%%%%%%%%%%%%%%%%%%%
Let $x \in \tilde{\beta}(y)$ with $\Lam_1(x) \in \cL_{d+1}'$.
Then there exists $s_x$ such that $1/2 < s_x \leq 1$ and $\Lam_1(x) \cap \bR \be_{d+1} = \{s_x \be_{d+1} \bZ\}$.
We denote $s_x \be_{d+1}$ by $\bv (x)$.

First, we prove (\ref{eq412}).
It can be checked by a direct calculation that the map $x \mapsto a_{t_n} h(y-x) a_{t_n}^{-1} \bv (x)$ is a bijection from $\{x \in \tilde{\beta}(y) : \Lambda_1(x) \in \cL_{d+1}' \}$ to $M \cap \wh{\Lam}_1$ where
\[
M = \{ (z_1,\dots, z_{d+1}) : \max_{1\leq i\leq d} |z_i| \leq \eps_n e^{\xi nt} |z_{d+1}|,\ 1/2 < |z_{d+1}| \leq 1 \}.
\]
Thus it suffices to estimate $\# (M \cap \wh{\Lam}_1)$.
Let
\begin{align*}
M^{(1)} &= \{ (z_1,\dots,z_{d+1}) : \max_{1\leq i\leq d} |z_i| \leq \frac{1}{2} \eps_n e^{\xi nt},\ |z_{d+1}| \leq 1 \} \\
M^{(2)} &= \{ (z_1,\dots,z_{d+1}) : \max_{1\leq i\leq d} |z_i| \leq \frac{1}{2} \eps_n e^{\xi nt},\ |z_{d+1}| \leq \frac{1}{2} \}.
\end{align*}
Since $M^{(1)} \smallsetminus M^{(2)} \subset M \subset 2 M^{(2)}$, we have
\begin{equation}\label{eq415}
\# (M^{(1)} \cap \wh{\Lam}_1) - \# (M^{(2)} \cap \wh{\Lam}_1) \leq \# (M \cap \wh{\Lam}_1) \leq \# (2M^{(2)} \cap \wh{\Lam}_1).
\end{equation}

We will use Lemma \ref{CLPCSLem7} to estimate $\# (M^{(i)} \cap \wh{\Lam}_1)$ for $i=1,2$.
Since $\Lam \in \cK_{\eps_{n-1}^2}^*\subset \cK_{\eps_{n}^2}^*$, it follows from the natural identification $\cE_d^{**}=\bR^d$ and Minkowski's second theorem that there exist contants $C_1, C_2>0$ depending only on $d$ such that
\[
\lam_1(B_1,\Lam) \geq C_1 \eps_n^{2d} \text{ and } \lam_{d+1}(B_1,\Lam) \leq C_2 \eps_n^{-2}.
\]
Since $\Lam = a_{\xi nt}^{-1}\Lam_1$, for $i=1,2$, we have
\[
\begin{split}
\lam_1(M^{(i)},\Lam_1) 
&= \lam_1( a_{\xi nt}^{-1} M^{(i)},\Lam) 
\geq \lam_1( a_{\xi nt}^{-1}M^{(1)},\Lam)\\
&\geq \lam_1( B_{(d+1) e^{\xi nt}},\Lam) 
\geq \frac{C_1}{d+1} e^{-\xi nt} \eps_n^{2d}
\end{split}
\]
and
\[
\begin{split}
\lam_{d+1} ( M^{(i)}, \Lam_1 )
&= \lam_{d+1} ( a_{\xi nt}^{-1} M^{(i)}, \Lam )
\leq \lam_{d+1} ( a_{\xi nt}^{-1} M^{(2)}, \Lam )\\
&\leq \lam_{d+1} ( B_{\frac{1}{2} \eps_n e^{(1-w_1) \xi nt}}, \Lam )
\leq 2C_2 e^{(w_1-1) \xi nt} \eps_n^{-3}.
\end{split}
\]
Thus we can choose $t\geq 1$ large enough so that for all $n\in\bN$
\eqlabel{t2}{
\lam_{d+1} ( M^{(i)}, \Lam_1 ) <\wt{c} \quad \text{and}\quad
-\lam_{d+1} ( M^{(i)}, \Lam_1 ) \log \lam_{1} ( M^{(i)}, \Lam_1 ) <\wt{c}. } 
Using Lemma \ref{CLPCSLem7} and \eqref{eq415}, we have
\[
\left( \frac{2}{c_0} - c_0 \right) \frac{1}{\zeta(d+1)} \eps_n^d e^{\xi dnt}
\leq \# ( M \cap \wh{\Lam}_1 )
\leq \frac{c_0}{\zeta(d+1)} 2^{d+1} \eps_n^d e^{\xi dnt}.
\] 
By \eqref{c0eq}, we complete the proof of \eqref{eq412}.

Next, we prove (\ref{eq413}) and (\ref{eq414}).
Let $s_1 = \eps_n^2$, $s_2 = r$, $a^{(1)} = a_{\xi nt}$, $a^{(2)} = b_n a_{\xi nt}$, and 
\[
\cS_j = \{ x \in \tilde{\beta}(y) : \Lam_j(x) \in \cL_{d+1}' \smallsetminus \cK_{s_j}^* \} \text{ for } j=1,2.
\]
Recall that 
\eq{
\cS(\Lam,\bbr,s) = \left\{ \bv \in M_\bbr \cap \widehat{\Lam} : \varphi(\bv) = 0 \text{ for some } \varphi \in N_{(d+1)sr_M}(\bbr,s) \cap \widehat{\Lam^*} \right\}.
}
We will show that 
\begin{equation} \label{eqmine}
\# \cS_j \leq \# \cS(\Lam_j, \bbr_j, s_j) \quad (j=1,2)
\end{equation}
for some $\bbr_j$ and apply Lemma \ref{CLPCSLem9} and \ref{CLPCSLem10}.

Let $z^{(1)}$ and $z^{(2)}$ be vectors in $\bR^d$ such that
\begin{align*}
z^{(1)}_i &= (y_i - x_i) e^{(w_i + 1)t_n} \quad \text{for } 1 \leq i \leq d;\\
z^{(2)}_i &= 
\begin{cases}
(y_i - x_i) e^{- \frac{1}{\ell}(w_{\ell+1} + \cdots + w_d) nt + (w_i +1)t_n} \quad &\text{if } 1 \leq i \leq \ell, \\
(y_i - x_i) e^{w_i nt + (w_i + 1)t_n} \quad &\text{if } \ell+1 \leq i \leq d.
\end{cases}
\end{align*}
Then $h(z^{(j)}) = a^{(j)} a_{t_{n-1}} h(y-x) (a^{(j)} a_{t_{n-1}})^{-1}$ for $j=1,2$ and
\begin{align*}
|z^{(1)}_i| &\leq \eps_n e^{\xi nt} =: r^{(1)}_{i} \quad \text{for } 1 \leq i \leq d; \\
|z^{(2)}_i| &\leq 
\begin{cases}
\eps_n e^{\left( \xi - \frac{1}{\ell}(w_{\ell+1} + \cdots + w_d) \right) nt} =: r^{(2)}_{i} \quad &\text{if } 1 \leq i \leq \ell, \\
\eps_n e^{(\xi + w_i)nt} =: r^{(2)}_{i} \quad &\text{if } \ell+1 \leq i \leq d.
\end{cases}
\end{align*}
Since $\bv(x) \in \Lam_1(x) \cap \Lam_2(x)$, for $j=1,2$,
\[
\bw_j(x) := h(z^{(j)}) \bv(x) \in \Lam_j.
\]
For $\bbr_j = (r^{(j)}_{1},\dots,r^{(j)}_{d},1)$, let $x \in \cS_j$.
Then, for $i=1, \dots, d$, the $i$-th component of $\bw_j(x)$ is $s_x z_i^{(j)}$ whose norm is smaller than $r_i^{(j)}$ and the $(d+1)$-th component of $\bw_j(x)$ is 1.
Thus we have $\bw_j(x) \in M_{\bbr_j}$.
If $\bw_j(x)$ is not primitive, then there exists an interger $n'>1$ such that $\frac{1}{n'}\bw_j(x) \in \Lam_j$.
Then we have $\frac{1}{n'} \bv(x) = \frac{1}{n'} h(-z^{(j)})\bw_j(x) \in \Lam_1(x) \cap \Lam_2(x)$, which contradicts to the definition of $\bv(x)$ and $\cL_{d+1}'$.
Thus we have $\bw_j(x) \in M_{\bbr_j} \cap \wh{\Lam}_j$.
Since for $x \in \cS_j$, the vector $\bv(x)$ is uniquely determined and the multiplication by $h(z^{(j)})$ is an invertible linear transformation, the map 
\[\cS_j \to M_{\bbr_j} \cap \wh{\Lam}_j \quad \text{given by} \quad x \mapsto \bw_j(x)\] is well-defined and injective. 

Hence, in order to show (\ref{eqmine}), we should find $\varphi_j \in N_{(d+1) s_j r^{(j)}_M}(\mb{r}_j,s_j) \cap \wh{\Lam_j^*}$ such that $\varphi_j (\bw_j(x))=0$.
It follows from the definition of $\cS_j$ that for $x \in \cS_j$, $a^{(j)} a_{t_{n-1}} h(x) \bZ^{d+1} \notin \cK_{s_j}^*$.
Then there exists $\varphi_j \in \wh{\Lam^*_j}$ such that $\|h( z^{(j)} )^* \varphi_j \| < s_j$, where $h( z^{(j)} )^*$ is the adjoint action defined by $g^*\varphi(\bv) = \varphi(g\bv)$ for all $g \in \SL_{d+1}(\bR)$, $\varphi \in \cE_{d+1}^*$, and $\bv \in \bR^{d+1}$.
It follows from direct calculation that 
\[
h(z^{(j)})^* \varphi_j = \left( \varphi_j (\be_1), \dots, \varphi_j (\be_d), \sum_{i=1}^d z_i^{(j)} \varphi_j (\be_i) + \varphi_j(\be_{d+1}) \right).
\]
Choose $t\geq 1$ large enough so that for all $n\in\bN$ 
\eqlabel{t3}{\eps e^{\xi nt} \geq 1.} It follows from
$\|h( z^{(j)} )^* \varphi_j \| < s_j$ that
\begin{align*}
&|\varphi_j (\be_i)| < s_j \quad \text{for } 1 \leq i \leq d ;\\
&|\varphi_j (\be_{d+1})| < s_j + ds_j r^{(j)}_{M} < (d+1) s_j r^{(j)}_{M}.
\end{align*}
Hence we have $\varphi_j \in N_{(d+1) s_j r^{(j)}_{M}}$.
It follows that 
\begin{align*}
| \varphi_j (\bw_j(x)) |
&= | h(z^{(j)})^* \varphi_j ( h( -z^{(j)} ) \bw_j(x) ) |
= | h(z^{(j)})^* \varphi_j ( \bv(x) ) | \\
&\leq | h(z^{(j)})^* \varphi_j ( \be_{d+1} ) |
\leq \| h(z^{(j)})^* \varphi_j \|
<s_j
<1.
\end{align*}
Since $\varphi_j ( \bw_j(x) ) \in \bZ$, it follows that $\varphi_j ( \bw_j(x) ) = 0$. This proves \eqref{eqmine}.

We choose $t\geq 1$ large enough so that for all $n\in\bN$ 
\eqlabel{t4}{ce^{-w_d \xi n t / (2 d^3)} < \eps_n \quad \text{and}\quad e^{-\delta n t} < \eps_n .}
Since $\Lam \in \cK_{\eps_{n-1}^2}^*\subset \cK_{\eps_{n}^2}^*$ and by \eqref{t4}, it follows from Lemmas \ref{CLPCSLem9} and \ref{CLPCSLem10} that
\begin{align*}
&\cS_1 \leq \# \cS(\Lam_1,\bbr_1,s_1) \leq \eps_n \vol(M_{\bbr_1}) = 2^{d+1} \eps_n^{d+1} e^{\xi dnt}, \\
&\cS_2 \leq \# \cS(\Lam_2,\bbr_2,s_2) \leq r^2 \vol(M_{\bbr_2}) = 2^{d+1} r^2 \eps_n^d e^{\xi dnt}.
\end{align*}
By the assumption \eqref{epsrprop} for $\eps$ and $r$, this completes the proof.
\end{proof}

The following lemma is a $d$-dimensional version of \cite[Lemma 4.1]{LSST}. 
\begin{lem}\label{LBThm1}
$ \cF(\cT,\beta) \subset \mathrm{Sing}(w)$.
\end{lem}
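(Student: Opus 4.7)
The plan is to show $w$-singularity directly, by extracting an explicit integer solution to \eqref{weightedsing} from the vertex $\tau_n$ at a suitably chosen level $n$. Given $x\in\cF(\cT,\beta)$, I would first write $x=\bigcap_{n\geq 0}\beta(\tau_n)$ for the unique descending chain with $\tau_n\in\cT_n$. For each $n\geq 1$, the membership $a_{t_n}h(\tau_n)\bZ^{d+1}\in\cL_{d+1}'$ together with \eqref{eqvec} supplies $s_n\in(1/2,1]$ with $s_n\be_{d+1}\in a_{t_n}h(\tau_n)\bZ^{d+1}$. Pulling this back, I obtain a pair $(\bp_n,q_n)\in\bZ^d\times\bZ$ with $q_n=s_ne^{t_n}\in(e^{t_n}/2,e^{t_n}]$ and, by expanding $a_{t_n}h(\tau_n)\bigl(\bp_n,q_n\bigr)^{T}=s_n\be_{d+1}$ coordinate by coordinate, $q_n(\tau_n)_i+(p_n)_i=0$ for every $1\leq i\leq d$. (In particular $\tau_n=-\bp_n/q_n$, consistent with $\tau_n\in\bQ^d$.)

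Next, I would combine the identity $|q_nx_i+(p_n)_i|=q_n|x_i-(\tau_n)_i|$ with the bound $q_n\leq e^{t_n}$ and the containment $x\in\beta(\tau_n)$, which by construction gives $|x_i-(\tau_n)_i|<\eps_n e^{-w_it_{n+1}-t_n}$. Raising to the power $1/w_i$ and multiplying the three estimates yields
\[
\max_{1\leq i\leq d}|q_nx_i+(p_n)_i|^{1/w_i}<\eps_n^{1/w_i}e^{-t_{n+1}}\leq\eps_n e^{-t_{n+1}},
\]
where the last inequality uses $\eps_n<1$ and $1/w_i\geq 1$ (recall $w_i\leq w_1<1$ for $d\geq 2$).

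Finally, given any $\eps'>0$, I would choose $n_0\in\bN$ with $\eps_{n_0}=\eps/n_0<\eps'$, set $T_0=e^{t_{n_0}}$, and for an arbitrary $T>T_0$ pick the unique $n\geq n_0$ with $T\in(e^{t_n},e^{t_{n+1}}]$. Then $0<q_n\leq e^{t_n}<T$, and combining the display above with $e^{-t_{n+1}}\leq 1/T$ gives
\[
\max_{1\leq i\leq d}|q_nx_i-(-(p_n)_i)|^{1/w_i}<\eps_n/T<\eps'/T,
\]
so $(-\bp_n,q_n)$ is an integer solution to \eqref{weightedsing} at scale $T$. Hence $x\in\Sing(w)$, proving the lemma. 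There is no real obstacle here: the tree was engineered so that at scale $t_n$ there is a rational approximation $\tau_n$ whose denominator is forced to lie in $(e^{t_n}/2,e^{t_n}]$ and whose anisotropic neighborhood $\beta(\tau_n)$ has side length $\eps_n e^{-w_it_{n+1}-t_n}$ in direction $i$, precisely so that the weighted Diophantine inequalities drop out after substituting $q_n$ and taking $1/w_i$-th powers; the only nontrivial check is the trivial estimate $\eps_n^{1/w_i}\leq\eps_n$.
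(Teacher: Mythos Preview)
Your argument is correct and is precisely the direct verification that the paper defers to \cite[Lemma 4.1]{LSST}: use \eqref{eqvec} to extract $(\bp_n,q_n)$ with $q_n\in(e^{t_n}/2,e^{t_n}]$ and $\tau_n=-\bp_n/q_n$, then feed the side-length bound from $\beta(\tau_n)$ into the weighted inequalities and pick $n$ according to which interval $(e^{t_n},e^{t_{n+1}}]$ contains $T$. The only cosmetic point is that the chain $\{\tau_n\}$ need not be asserted unique---existence of one chain with $x\in\bigcap_n\beta(\tau_n)$ is all that is used---but this does not affect the proof.
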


\begin{proof} % later %%%%%%%%%%%%%%%%%%%%%%%%%%% 근데 이건 안해도 될거같은데...
This lemma directly follows from the same argument in the proof of \cite[Lemma 4.1]{LSST}.
\end{proof}

% subsection : The lower bound calculation 	%%%%%%%%%%

\subsection{The lower bound calculation}
In this subsection we complete the proof of main results.
\begin{prop}\label{LBProp1}
Let $w=(w_1,\dots,w_d)\in \bR_{>0}^d$ where $w_1 = \cdots = w_\ell > w_{\ell+1} \geq \cdots \geq w_d >0$ and $\sum_{i=1}^d w_i = 1$ and let $(\cT,\beta)$ be the self-affine strunction on $\bR^d$ in the previous section. Then
\eq{ \dim \cF(\cT,\beta) \geq d-\frac{1}{1+w_1}. }
\end{prop}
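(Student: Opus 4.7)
The plan is to apply Corollary \ref{FHCor24} to the self-affine structure $(\cT,\beta)$ constructed in Section \ref{sec4.1}. From the definition of $\beta$, each rectangle $\beta(\tau)$ for $\tau \in \cT_n$ has side lengths
\[
L_n^{(i)} = 2\eps_n e^{-w_i t_{n+1} - t_n}, \qquad i=1,\dots,d,
\]
and the ordering $w_1=\cdots=w_\ell > w_{\ell+1} \geq \cdots \geq w_d$ yields $L_n^{(1)}=\cdots=L_n^{(\ell)} < L_n^{(\ell+1)} \leq \cdots \leq L_n^{(d)}$ as required. Lemma \ref{LBLem1} supplies the branching bound $\#\cT(\kappa) \geq \tfrac{1}{100}\eps_n^d e^{\xi d n t}$, so I take $C_n = \tfrac{1}{100}\eps_n^d e^{\xi d n t}$. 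For the separation parameter $\rho_n$, distinct sons $\tau, \tau'$ of a common parent are rationals with denominators in $(e^{t_n}/2, e^{t_n}]$ (forced by $a_{t_n}h(\tau)\bZ^{d+1} \in \cL'_{d+1}$), so any two of their distinct coordinates differ by at least $1/(q_\tau q_{\tau'}) \geq e^{-2t_n}$; after rescaling by $L_n^{(1)}$ this gives a $\rho_n$ of the form $e^{-cn}$.

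Having identified the ingredients, I would verify the four hypotheses of Corollary \ref{FHCor24}. Using $t_n - t_{n-1} = \xi n t$ and $\eps_n = \eps/n$, a direct computation yields
\[
-\log(L_n^{(i)}/L_{n-1}^{(i)}) \sim (1+w_i)\xi n t, \qquad \log C_n \sim \xi d n t.
\]
Condition (i) is then immediate from $w_d \leq w_1$ by choosing $k \geq (1+w_1)/(1+w_d)$, and (ii) holds for $t$ large. Condition (iii) is the separation bound above. For (iv), the logarithm of $\rho_n^\ell C_n \prod_{j=\ell+1}^d L_n^{(j)}/L_{n-1}^{(j)}$ is asymptotic to
\[
\xi n t \Bigl(d - \sum_{j=\ell+1}^d (1+w_j)\Bigr) - \ell c n \;=\; \xi n t\bigl(\ell(1+w_1)-1\bigr) - \ell c n,
\]
which is nonnegative for $t$ sufficiently large since $\ell(1+w_1)-1>0$ (as $w_1>0$ and $\ell\geq 1$).

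Finally, using $\sum_{j=\ell+1}^d w_j = 1 - \ell w_1$, the numerator in the limit of Corollary \ref{FHCor24} is
\[
\log\Bigl(C_n \prod_{j=\ell+1}^d \tfrac{L_n^{(j)}}{L_{n-1}^{(j)}}\Bigr) \sim \xi n t\bigl(\ell(1+w_1)-1\bigr),
\]
while the denominator is $-\log(L_n^{(1)}/L_{n-1}^{(1)}) \sim \xi n t(1+w_1)$. Therefore
\[
r \;=\; \frac{\ell(1+w_1)-1}{1+w_1} \;=\; \ell - \frac{1}{1+w_1},
\]
and Corollary \ref{FHCor24} gives $\dim_H \cF(\cT,\beta) \geq d - \ell + r = d - \tfrac{1}{1+w_1}$, as claimed.

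The main obstacle I anticipate is the careful verification of condition (iii): one must argue that $\mathrm{dist}(\beta(\tau), \beta(\tau')) \geq \rho_{n+1} L_n^{(1)}$ for distinct sons $\tau,\tau' \in \cT(\kappa)$, which requires combining the Diophantine lower bound $|\tau_i - \tau'_i| \geq 1/(q_\tau q_{\tau'})$ in the coordinate of disagreement with the rectangle widths $L_n^{(i)}$ and the chosen $\rho_n$. Once the separation is established, the remaining verification and the limit computation are essentially asymptotic bookkeeping guided by the $\xi n t$ scaling.
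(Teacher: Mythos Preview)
Your overall strategy and the final limit computation are correct and match the paper. The gap is exactly where you suspected: the separation parameter $\rho_n$. Your Diophantine argument gives at best $\|\tau-\tau'\|_\infty \geq e^{-2t_n}$ for distinct $\tau,\tau'\in\cT_n$, but $t_n = 1 + \xi t\,\frac{n(n+1)}{2}$ is \emph{quadratic} in $n$. Rescaling by $L_{n-1}^{(1)} = 2\eps_{n-1}e^{-w_1 t_n - t_{n-1}}$ (this is the correct normalization for sons at level $n$) yields
\[
\frac{e^{-2t_n}}{L_{n-1}^{(1)}} \asymp e^{-(1-w_1)t_n - \xi n t},
\]
and since $w_1<1$ the term $-(1-w_1)t_n$ forces $\rho_n$ to decay like $e^{-cn^2}$, not $e^{-cn}$. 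Hypothesis (iii) of Corollary~\ref{FHCor24} then fails outright, and so does (iv): the factor $\rho_n^\ell$ of order $e^{-c\ell n^2}$ swamps $C_n$ and the side-length ratios, which are only $e^{O(n)}$.

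The paper does not use denominators for separation at all. It uses the third condition in \eqref{treecondition}, namely $b_n a_{t_n} h(\tau)\bZ^{d+1}\in\cK_r^*$, which your argument ignores. For distinct sons $x,y\in\cT_n$ one forms the wedge of their distinguished vectors $\mathbf{v}(x)\wedge s_y\mathbf{e}_{d+1}$ inside $b_n a_{t_n} h(y)\bZ^{d+1}$; Lemma~\ref{LBLem3} and the $\cK_r^*$ hypothesis give $\|\mathbf{v}(x)\wedge s_y\mathbf{e}_{d+1}\|\geq c'r^{d-1}$. Because the diagonal matrix $b_n$ has been applied, this wedge bound unwinds to $|x_i - y_i| \gtrsim e^{-(w_i+1)t_n + O(nt)}$ rather than $e^{-2t_n}$. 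After dividing by $L_{n-1}^{(1)}\asymp e^{-(w_1+1)t_n + O(nt)}$ the quadratic part cancels, leaving
\[
\rho_n = \frac{c'r^{d-1}}{4\sqrt{d}\,\eps_{n-1}}\,e^{\frac{1}{\ell}(w_{\ell+1}+\cdots+w_d-\xi\ell)nt},
\]
which is genuinely $e^{-cn}$. This is the sole place where the $\cK_r^*$ condition and the auxiliary diagonal $b_n$ are used; without them the construction would still land in $\mathrm{Sing}(w)$, but Corollary~\ref{FHCor24} could not be applied.
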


We will prove Proposition \ref{LBProp1} using Corollary \ref{FHCor24}. %cor 2.4
Let $C_n, L_n^{(1)}, \dots, L_n^{(d)}$ be the positive constants defined as follows:
\eq{ C_n = \eps_n^d e^{\xi d nt}, \quad L_n^{(i)} = 2 \eps_n e^{-w_i t_{n+1} - t_n}, \forall i=1,\dots,d. }
It can be easily checked that the regular self-affine structure $(\cT,\beta)$ satisfies the assumptions \eqref{thm2pro1}, \eqref{thm2pro2}, and \eqref{thm2pro3} of 
Theorem \ref{FHThm1}. For the assumption \eqref{thm2pro4} of Theorem \ref{FHThm1}, we need the following lemma.

\begin{lem}\label{LBLem2}
Let $n\in\bN$ be large and $\tau \in \cT_{n-1}$. Then
\eq{ \mathrm{dist}(\beta(x),\beta(y)) \geq L_{n-1}^{(1)}\frac{c'r^{d-1}}{4\sqrt{d}\eps_{n-1}} 
e^{\frac{1}{\ell}(w_{\ell+1}+\cdots+w_d-\xi\ell)nt},
}
where $x,y \in \cT(\tau)$ are distinct and $c'$ is the positive constant in Lemma \ref{LBLem3}.
\end{lem}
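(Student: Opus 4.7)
\smallskip

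The plan is to apply Lemma \ref{LBLem3} to the lattice $\Lambda_2(x) := b_n a_{t_n} h(x)\bZ^{d+1}$, which by \eqref{treecondition} lies in $\cK_r^*$. For distinct $x, y \in \cT(\tau)$, I would exhibit two linearly independent vectors in $\Lambda_2(x)$: the first is $b_n \bv(x) = q_x e^{-t_n}\be_{d+1}$, where $q_x \in \bZ$ with $q_x e^{-t_n} \in (1/2, 1]$ is provided by \eqref{eqvec}; the second, call it $\bw(y)$, is built from $y$ via its denominator. Namely, the condition $a_{t_n} h(y)\bZ^{d+1} \in \cL'_{d+1}$ yields an integer $q_y$ with $q_y e^{-t_n} \in (1/2, 1]$ and $q_y y \in \bZ^d$, so that $(-q_y y, q_y)^\top \in \bZ^{d+1}$, and I may set $\bw(y) := b_n a_{t_n} h(x)(-q_y y, q_y)^\top \in \Lambda_2(x)$. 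The key identity $h(x)(-q_y y, q_y)^\top = (q_y(x-y), q_y)^\top$ shows that this vector is in $h(x)\bZ^{d+1}$ without requiring integrality of $x$ or $y-x$.

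A direct computation gives the coordinates of $\bw(y)$: for $1 \le i \le \ell$ it is $q_y e^{w_i t_n - \alpha n t}(x_i - y_i)$ with $\alpha := \tfrac{1}{\ell}(w_{\ell+1}+\cdots+w_d)$; for $\ell+1 \le i \le d$ it is $q_y e^{w_i(t_n + n t)}(x_i - y_i)$; and the $(d+1)$-th coordinate is $q_y e^{-t_n}$. Since $x \neq y$, the projection $\mathrm{pr}(\bw(y))$ onto the first $d$ coordinates is nonzero, so $\bw(y)$ and $b_n \bv(x)$ are linearly independent. Lemma \ref{LBLem3} with $D = d+1$ then yields $\|\bw(y) \wedge b_n \bv(x)\| \ge c' r^{d-1}$. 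Because $b_n \bv(x)$ is parallel to $\be_{d+1}$, the wedge norm factors as $q_x e^{-t_n} \cdot \|\mathrm{pr}(\bw(y))\|$, and combined with $q_x e^{-t_n} \le 1$ this gives $\|\mathrm{pr}(\bw(y))\| \ge c' r^{d-1}$, so some index $i$ satisfies $|\mathrm{pr}(\bw(y))_i| \ge c' r^{d-1}/\sqrt{d}$.

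Solving for $|x_i - y_i|$ using $q_y \le e^{t_n}$, in the case $i \le \ell$ (where $w_i = w_1$) one obtains $|x_i - y_i| \ge \tfrac{c' r^{d-1}}{\sqrt{d}} e^{-(w_1+1)t_n + \alpha n t}$, which agrees with the lemma's target after using $-t_{n-1} - \xi n t = -t_n$. To pass from $|x_i - y_i|$ to $\mathrm{dist}(\beta(x),\beta(y))$, I would note that $L_n^{(i)} = 2\eps_n e^{-w_i t_{n+1} - t_n}$ is smaller than this coordinate lower bound by an exponential factor $e^{-w_i \xi (n+1)t - \alpha n t}$, so a further condition on $t$ (appended to \eqref{t1}--\eqref{t4}) forces $|x_i - y_i| \ge 2 L_n^{(i)}$ and thus $\mathrm{dist}(\beta(x),\beta(y)) \ge |x_i - y_i|/2$. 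The main obstacle will be the case $i > \ell$, where the coordinate bound weakens to $|x_i - y_i| \ge \tfrac{c' r^{d-1}}{\sqrt{d}} e^{-(1+w_i)t_n - w_i n t}$: absorbing this against the target exponent $e^{-(1+w_1)t_n + \alpha n t}$ requires $(w_1 - w_i) t_n \ge (w_i + \alpha) n t - \log 2$, which holds because $w_1 > w_i$ for $i > \ell$ and $t_n = \xi t \, n(n+1)/2$ grows quadratically in $n$ while $n t$ is only linear. This is precisely why the hypothesis "Let $n \in \bN$ be large" is imposed.
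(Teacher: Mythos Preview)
Your proposal is correct and follows essentially the same route as the paper: both arguments place two linearly independent vectors (one parallel to $\be_{d+1}$, one encoding $x-y$) inside the lattice $b_n a_{t_n} h(\cdot)\bZ^{d+1}\in\cK_r^*$ and invoke Lemma~\ref{LBLem3} to lower-bound a coordinate of $x-y$, with the paper working in the lattice attached to $y$ and transporting $\bv(x)$ via $h(y-x)$ while you work symmetrically in the lattice attached to $x$. Your explicit observation that the case $i>\ell$ genuinely needs $n$ large (because $(w_1-w_i)t_n$ is quadratic in $n$ while $(w_i+\alpha)nt$ is linear) is in fact sharper than the paper's phrasing, which lists this as a largeness condition on $t$.
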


\begin{proof}
By the construction of $\cT$ and the definition of $b_n$, there are $1/2 \leq s_x, s_y  \leq 1$ such that
\eq{ s_x\be_{d+1} \in b_n a_{t_n} h(x) \bZ^{d+1} , \quad s_y\be_{d+1} \in b_n a_{t_n} h(y) \bZ^{d+1}. }
Let us denote 
\eq{
\begin{split} 
\bv &= b_n a_{t_n} h(y-x)(b_n a_{t_n})^{-1} s_x \be_{d+1} \in b_n a_{t_n} h(y) \bZ^{d+1},\\
\bv &\wedge s_y \be_{d+1} = s_x s_y \sum_{i=1}^{d} u_i\be_i \wedge \be_{d+1}.
\end{split}
} 
Observe that 
\[
u_i = 
\begin{cases} 
(y_i - x_i) e^{(w_i +1) t_n - \frac{1}{\ell}(w_{\ell+1} + \cdots + w_d)nt} \quad &\text{for } 1\leq i\leq \ell,\\
(y_i - x_i) e^{(w_i +1) t_n + w_i nt}\quad  &\text{for } \ell+1 \leq i\leq d.
\end{cases}
\]
Since $x$ and $y$ are distinct, the vectors $\bv$ and $e_y \be_{d+1}$ are linearly independent,
hence it follows from Lemma \ref{LBLem3} that 
\eqlabel{wedgebound}{
\sqrt{d}\|\mb{u}\|_{\infty} \geq s_x s_y \| \bu \| = \| \bv \wedge s_y \be_{d+1} \| \geq c' r^{d-1},
}
where $\bu=(u_1,\dots,u_d)\in\bR^d$ and $\|\cdot\|_\infty$ denotes the max norm.

Let $x' \in \beta(x)$ and $y' \in \beta(y)$.
Suppose that $\|\mb{u}\|_\infty = |u_i|$ for some $1\leq i\leq \ell$. 
Then it follows from \eqref{wedgebound} that 
\begin{align}
\|y'-x'\| &\geq |y_i'-x_i'| \geq |y_i-x_i| - |x_i-x_i'| - |y_i-y_i'| \nonumber\\ 
&\geq e^{-(w_i+1) t_n + \frac{1}{\ell}(w_{\ell+1}+\cdots+w_d) nt } \left( \frac{c' r^{d-1}}{\sqrt{d}} - 2\eps_n e^{-\xi w_i (n+1) t - \frac{1}{\ell}(w_{\ell+1} + \cdots + w_d)nt} \right) \nonumber\\
&\geq e^{-(w_i+1) t_n + \frac{1}{\ell}(w_{\ell+1}+\cdots+w_d) nt } \frac{c' r^{d-1}}{2\sqrt{d}} \label{t5} \\
&= L_{n-1}^{(i)} \frac{c' r^{d-1}}{4\sqrt{d}\eps_{n-1}} e^{\frac{1}{\ell}(w_{\ell+1}+\cdots+w_d-\xi \ell) nt} \nonumber\\
&\geq L_{n-1}^{(1)} \frac{c' r^{d-1}}{4\sqrt{d}\eps_{n-1}} e^{\frac{1}{\ell}(w_{\ell+1}+\cdots+w_d-\xi \ell) nt}. \nonumber
\end{align}
We choose $t\geq 1$ large enough so that the third line \eqref{t5} holds for all large enough $n\in\bN$.

On the other hand, if $\|\mb{u}\|_\infty = |u_i|$ for some $\ell +1 \leq i\leq d$, then
we have
\begin{align}
\|y'-x'\| &\geq |y_i'-x_i'| \geq |y_i-x_i| - |x_i-x_i'| - |y_i-y_i'| \nonumber\\ 
&\geq e^{-(w_i+1) t_n -w_i nt } \left( \frac{c' r^{d-1}}{\sqrt{d}} - 2\eps_n e^{w_i nt-\xi w_i (n+1) t} \right) \nonumber\\
&\geq e^{-(w_i+1) t_n -w_i nt } \frac{c' r^{d-1}}{2\sqrt{d}} \label{t6}\\
&= L_{n-1}^{(1)} \frac{c' r^{d-1}}{4\sqrt{d}\eps_{n-1}} e^{(w_1-w_i)t_n - (\xi+w_i)nt} \nonumber\\
&\geq L_{n-1}^{(1)} \frac{c' r^{d-1}}{4\sqrt{d}\eps_{n-1}} e^{(w_1-w_{\ell+1})t_n - (\xi+w_i)nt} \nonumber\\
&\geq L_{n-1}^{(1)} \frac{c' r^{d-1}}{4\sqrt{d}\eps_{n-1}} e^{\frac{1}{\ell}(w_{\ell+1}+\cdots+w_d-\xi \ell) nt}. \label{t7}
\end{align} 
We choose $t\geq 1$ large enough so that the third line \eqref{t6} and last line \eqref{t7} hold for all large enough $n\in\bN$.

This concludes the proof of the lemma.
\end{proof}

Since $w_{\ell+1}+\cdots+w_d < \xi\ell$, we can choose $t\geq 1$ large enough so that for all $n\in\bN$
\eqlabel{t8}{\rho_n := \frac{c'r^{d-1}}{4\sqrt{d}\eps_{n-1}}e^{\frac{1}{\ell}(w_{\ell+1}+\cdots+w_d-\xi\ell)nt} \leq 1.} 
The assumption \eqref{thm2pro4} of Theorem \ref{FHThm1} follows from Lemma \ref{LBLem2}. 
\begin{proof}[Proof of Proposition \ref{LBProp1}]
We prove the proposition applying Corollary \ref{FHCor24}. %cor 2.4
We first claim that the assumptions of Corollary \ref{FHCor24} hold for some large $k, n_0 \in \bN$.
%Let $n_0 \in\bN$ be such that Lemma \ref{LBLem2} holds for all $n \geq n_0$.  
To show \ref{FHCorA1}, observe that $\frac{ L_{kn}^{(d)} }{ L_{kn-1}^{(d)} } \leq \frac{ L_n^{(1)} }{L_{n-1}^{(1)}}$ if and only if
\[
\frac{kn-1}{kn}\cdot \frac{n}{n-1} \leq e^{w_d \xi (kn+1)t - w_1 \xi(n+1)t + \xi knt -\xi nt}
\]
If $k\in\bN$ is large enough, then the right hand side of the above equation has a positive exponent, hence $\frac{ L_{kn}^{(d)} }{ L_{kn-1}^{(d)} } \leq \frac{ L_n^{(1)} }{L_{n-1}^{(1)}}$, for all large enough $n\in\bN$. Similarly, $L_{kn_0-1}^{(d)} < L_{n_0-1}^{(1)}$ also holds for some large $k, n_0 \in\bN$. Since $C_n$ has a positive exponent and $\rho_n $ has a negative exponent, we can choose $k\in\bN$ large enough so that \ref{FHCorA2} and \ref{FHCorA3} hold for all large enough $n\in\bN$. To show \ref{FHCorA4}, observe that for all large enough $n\in\bN$,
\[
\rho_n^\ell C_n \prod_{j=\ell+1}^d L_n^{(j)}/L_{n-1}^{(j)}=const \times \frac{\eps_n^{2d-\ell}}{\eps_{n-1}^{d}} \geq const \times \eps^{d-\ell}2^{-d}n^{-d+\ell}, 
\] where the constant is independent of $n$. Thus we can choose $k\in\bN$ large enough so that \ref{FHCorA4} holds for all large enough $n\in\bN$. This proves the claim.

%\item\label{FHCorA1} $\frac{ L_{kn}^{(d)} }{ L_{kn-1}^{(d)} } \leq \frac{ L_n^{(1)} }{L_{n-1}^{(1)}} \text{ and } L_{kn_0-1}^{(d)} < L_{n_0-1}^{(1)},$
%\item\label{FHCorA2} $e^{n/k} \leq C_n \leq e^{kn},$
%\item\label{FHCorA3} $e^{-kn} \leq \rho_n \leq e^{-n/k},$
%\item\label{FHCorA4} $\rho_n^\ell C_n \prod_{j=\ell+1}^d L_n^{(j)}/L_{n-1}^{(j)} \geq n^{-k}.$
 
Then we have
\begin{align*}
&\frac{\log(C_n L_n^{(\ell+1)} \cdots L_n^{(d)}/L_{n-1}^{(\ell+1)} \cdots L_{n-1}^{(d)})}{-\log(L_n^{(1)}/L_{n-1}^{(1)})} \\
&= \frac{\xi d nt - \xi (w_{\ell+1}+\cdots+w_d)(n+1)t - \xi (d-\ell) nt + o(n)}{ \xi w_1(n+1)t + \xi nt + o(n)} \\
&\to \frac{\ell-(w_{\ell+1}+\cdots+w_d)}{1+w_1} = \ell-\frac{1}{1+w_1} \quad\text{ as } n \to \infty
\end{align*}
Hence Corollary \ref{FHCor24} implies 
\eq{ \dim \cF(\cT,\beta) \geq (d-\ell)+\ell-\frac{1}{1+w_1} = d -\frac{1}{1+w_1}. }
\end{proof}

\begin{proof}[Proof of Theorem \ref{mainthm}]
If $w_1=\cdots=w_d$, then the result follows from \cite[Theorem 1.1]{CC16}. % CC16
If there exists $1 \leq \ell \leq d-1$ such that $w_1 = \cdots = w_\ell > w_{\ell+1} \geq \cdots \geq w_d$, then the result follows from Lemma \ref{LBThm1} and Proposition \ref{LBProp1}.
\end{proof}

\begin{proof}[Proof of Theorem \ref{mainthm_2}]
It follows from the same argument in the proof of \cite[Theorem 1.5]{LSST}.
\end{proof}
\def\cprime{$'$} \def\cprime{$'$} \def\cprime{$'$}
\providecommand{\bysame}{\leavevmode\hbox to3em{\hrulefill}\thinspace}
\providecommand{\MR}{\relax\ifhmode\unskip\space\fi MR }
% \MRhref is called by the amsart/book/proc definition of \MR.
\providecommand{\MRhref}[2]{%
  \href{http://www.ams.org/mathscinet-getitem?mr=#1}{#2}
}
\providecommand{\href}[2]{#2}

\end{document}